\documentclass[12pt,psfig,reqno]{amsart}
\usepackage{amssymb}
\usepackage{amsmath}
\usepackage{amsfonts}
\usepackage{amsthm}
\usepackage{enumitem}
\usepackage{graphicx}
\usepackage{extarrows}
\usepackage{color}
\usepackage{bm}
\usepackage{mathtools}
\usepackage{caption}
\usepackage{subcaption}
\usepackage{hyperref}
\usepackage{diagbox}
\usepackage{cite}
\usepackage[margin=1in]{geometry}

\newtheorem{thm}{Theorem}[section]
\newtheorem{remark}[thm]{Remark}
\newtheorem{pro}[thm]{Proposition}
\newtheorem{cor}[thm]{Corollary}
\newtheorem{exam}[thm]{Example}
\newtheorem{defn}[thm]{Definition}
\newtheorem{lemma}[thm]{Lemma}

\newtheorem{conj}[thm]{Conjecture}

\DeclareMathOperator{\diag}{diag}
\numberwithin{equation}{section}

\usepackage{color}
\begin{document}
\title{Spectral structure and eigenmatrices of self-affine measures with $p$-digits in $ \bf{\mathbb{R}^{n}} $}
\author{Jia-Long Chen$^{1}$}
\author{Wen-Hui Ai$^{2,*}$}

\address{$^1$School of Mathematics, South China University of Technology, Guangzhou, 510641, China.}
\address{$^2$Key Laboratory of Computing and Stochastic Mathematics (Ministry of Education),
	School of Mathematics and Statistics, Hunan Normal University, Changsha, Hunan 410081, P. R. China}
\email{jialongchen1@163.com(J.L. C.)}
\email{awhxyz123@163.com(W.H. A.)}
\date{\today}
\keywords{Self-affine measure, Spectral eigenmatrix, Spectrum, Tree structure, Spectral measure}
\makeatletter
\@namedef{subjclassname@2020}{\textup{2020} Mathematics Subject Classification}
\makeatother
\subjclass[2020]{Primary 28A80; Secondary 42C05, 42A85}
\thanks{The research is supported in part by the NNSF of China (Nos. 12201206 and 12371072), the Hunan Provincial NSF (No. 2024JJ6301). The first author was also supported by the Doctoral Student Program of the Young S$\&$T Talents Cultivation Project, CAST. \\
$^*$Corresponding author.}

\begin{abstract}
For a prime number $p > 2$, let $\bm{0} \in D \subset \mathbb{Z}^n$ be a $p$-element digit set satisfying
$
\mathcal{Z}(\widehat{\delta}_D) =\cup_{j=1}^{p-1}(\frac{j}{p}\bm{a}+\mathbb{Z}^{n})
$
for some \( \bm{a} \in \{ (i_1, \dots, i_n)^t : i_k \in [1, p-1] \cap \mathbb{Z}, 1\leq k\leq n \} \), where $\mathcal{Z}(\widehat{\delta}_D)$ is the zero set of the Fourier transform of $\delta_D$. Let $Q$ be an integer expansive diagonal matrix in $\mathbb{R}^{n}$, the self-affine measure $\mu_{Q,D}$ is defined by
\[
\mu_{Q,D}(\cdot) = \frac{1}{\#D} \sum_{d \in D} \mu_{Q,D}(Q(\cdot) - d).
\]
In this paper, we first provide sufficient condition for a maximal orthogonal family $E_{\Lambda}=\{ e^{-2\pi i \langle \lambda, x \rangle} : \lambda\in \Lambda \subset\mathbb{R}^{n}\}$ to be an orthogonal basis of $L^2(\mu_{Q,D})$ when $Q = p\operatorname{diag}[q, \dots, q]$ with $q\geq 1$.
Then we obtain necessary and sufficient conditions for the integer matrix $R$ such that $E_{\Lambda}$ and $E_{R\Lambda}$ are both orthogonal basis of $L^{2}(\mu_{Q,D})$.
Furthermore, for $Q = p\operatorname{diag}[l_1, \dots, l_n]$ with $|l_1 \dots l_n| > 1$, we give a necessary and sufficient condition under which the real diagonal matrix is the second type spectral eigenmatrices of $\mu_{Q,D}$.
\end{abstract}

\maketitle

\tableofcontents

\section{\bf{Introduction}}
Let $ \mu $ be a Borel probability measure on Euclidean space $ \mathbb{R}^{n} $, and let $\widehat{\mu}$ be the Fourier transform of $\mu$, i.e.,
 \begin{equation*}
 \widehat{\mu}(\xi)=\int_{\mathbb{R}^{n}}e^{-2\pi i  \langle\xi, x\rangle } d\mu(x),\quad \xi\in\mathbb{R}^{n} .
 \end{equation*}
A fundamental problem in harmonic analysis for $L^{2}(\mu)$ is whether there exists a
discrete set $\Lambda\subset\mathbb{R}^{n}$ such that the collection of exponential functions
\begin{equation*}
E_{\Lambda}:=\left\{e^{-2\pi i\langle\lambda, x\rangle}:\lambda\in \Lambda \right\}
\end{equation*}
forms an orthonormal basis for $L^{2}(\mu)$.
If this holds, $\mu$ is called a spectral measure
and $\Lambda$ is a spectrum of $\mu$, and $(\mu, \Lambda)$ is called a spectral pair.
The study of spectral measures was first initiated by Fuglede in
1974 \cite{B1}, who proposed the famous
{\it Fuglede's Conjecture:} $\mathcal{X}_{\Omega}(x)dx$  is a spectral measure if and only if $\Omega$ is
a translation tile $\mathbb{R}^{n}$.
This conjecture was disproven in various directions by Tao and others in $n \geq 3$ \cite{T1,KM1, FR, M}.
Nevertheless, the conjecture remains open in several important cases, particularly in low dimensions.
For comprehensive surveys and recent developments, we refer to \cite{L, LL, LWInvent, LM} and their bibliographies.

In 1998, Jorgensen and Pedersen \cite{JP1} found the first singular spectral measure, which is the middle-fourth Cantor measure. This result has greatly stimulated subsequent research. In \cite{HL}, Hu and Lau showed that $\mu_{\rho^{-1},\{0,1\}}$ admits an infinite orthonormal set if and only if $\rho$ is the $n$-th root of $p/q$, where $p$ is an odd integer and $q$ is an even integer. The characterization problem was finally resolved by Dai \cite{D1}, who proved that the aforementioned Cantor measures $\mu_{2k,\{0,1\}}$ are the only class of spectral measures among the family of measures $\mu_{\rho^{-1},\{0,1\}}$. Dai, He and Lau \cite{DHL1} also investigated $N$-Bernoulli measures $\mu_{\rho^{-1},\{0,1,\dots,N-1\}}$.

Let $ Q \in M_{n}(\mathbb{R}) $ be an $ n\times n $ expansive real matrix (i.e., all the eigenvalues of $Q$ have
modulus strictly greater than one) and $ D $ be a finite subset of $ \mathbb{R}^{n} $ with cardinality $ \#D=p $. We refer to system $ \{f_{d}(x)=Q^{-1}(x+d)\}_{d\in D} $ as an iterated function system (IFS) and there is a unique nonempty compact set $ T $ such that $ T=\cup_{d \in D}f_{d}(T) $, where $ T $ is called invariant set (or attractor)  \cite{F1,H1}. Then the IFS arises a natural self-affine measure $ \mu_{Q,D} $ satisfying
\begin{equation}\label{1.1}
	\mu_{Q,D}=\dfrac{1}{\#D}\sum_{d\in D}\mu_{Q,D}\circ f_{d}^{-1}=\frac{1}{\#D} \sum_{d \in D} \mu_{Q,D}(Q(\cdot) - d).
\end{equation}
Such a measure $ \mu_{Q,D} $ is supported on the attractor $ T $ of the IFS $ \{f_{d}\}_{d\in D} $, where
\begin{equation*}
	T:=T(Q,D)=\left\{\sum_{k=1}^{\infty}Q^{-k}d_{k}:d_{k}\in D\right\}:=\sum_{k=1}^{\infty}Q^{-k}D.
\end{equation*}
It is known that a self-affine measure $ \mu_{Q,D} $ can
be expressed by the infinite convolution of Dirac measures with equal weights as follows:
\begin{equation}\label{eq1.2}
	\mu_{Q,D}=\delta_{Q^{-1}D}*\delta_{Q^{-2}D}*\dots*\delta_{Q^{-k}D}*\dots,
\end{equation}
where $ \delta_{E}=\#E^{-1}\sum_{e\in E}\delta_{e} $ and $ \delta_{e} $ is the Dirac probability measure with mass on the point $ e $, and the convergence is in a weak sense.

In this paper, for a prime number $p > 2$, we consider the self-affine measure $\mu_{Q,D}$ given by \eqref{1.1}, which satisfies the following conditions: $Q = p\operatorname{diag}[l_1, \dots, l_n]\in pM_{n}(\mathbb{Z})$, and
\begin{equation}\label{1.3}
	\mathcal{Z}(\widehat{\delta}_D)=\left\{x| \widehat{\delta}_D(x)=0\right\} = \bigcup_{j=1}^{p-1} \left( \frac{j}{p}\bm{a} + \mathbb{Z}^{n} \right),\;\text{with}\; \#D=p,
\end{equation}
for some vector $\bm{a} \in \{ (i_1, \dots, i_n)^t : i_k \in [1, p-1] \cap \mathbb{Z},\ 1\leq k\leq n \}$. The fractal measures generated by digit sets satisfying condition \eqref{1.3} are classical, and their spectral properties have been extensively studied. The known results can be found in \cite{WLS1,CC,Y1,CYZ}, and we will briefly review them in subsequent sections. The main results of this paper fall into two parts: one concerns the spectral structure, while the other addresses the spectral eigenmatrix problem.

\subsection{\bf{Spectral structure}}

Characterizing the spectra of a given spectral measure is fundamental to investigating the diverse properties of spectral measures. This problem was first addressed by Dutkay, Han and Sun \cite{DHS}, who employed 4-adic expansions with the digit set $\{0, 1, 2, 3\}$ to characterize the maximal orthogonal sets of the middle-fourth Cantor measure $\mu_{4,\{0,2\}}$, and further mapped these sets to the labelings of a binary tree. For the Cantor measure $\mu_{\rho^{-1},\{0,1,\dots,N-1\}}$, Dai, He and Lai \cite{DHL} characterized all maximal orthogonal sets $\Lambda$ in the case where $N$ divides $\rho^{-1}$. Their approach relied on a maximal tree mapping defined on the $N$-adic tree, within which every element of $\Lambda$ admits a unique representation via finite $\rho^{-1}$-adic expansions. Later, An, Dong and He \cite{ADH} established the tree structure for the planar Sierpinski measure $\mu_{Q,D}$ generated by the expanding matrix $Q = \operatorname{diag}[3q, 3q]$ with $q \geq 1$ and the digit set $D = \{(0,0)^t, (0,1)^t, (1,0)^t\}$. As shown in the above results, the digit sets of all the measures studied satisfy condition \eqref{1.3}, which is not difficult to verify. Moreover, the spectrality of measures generated by digit sets satisfying \eqref{1.3} has been widely investigated\cite{Y1,CYZ}. In \cite{CY}, it was shown that the measure $\mu_{Q,D}$ generated by a digit set obeying \eqref{1.3} and an expanding matrix $Q\in M_{n}(\mathbb{Z})$ is a spectral measure if and only if $Q^{t}\bm{a}\in p\mathbb{Z}^{n}$. Therefore, it is natural to employ tree structures to describe its spectra. Before presenting the results in the first part, we introduce some necessary notation and concepts.

For a prime number $p > 2$, let $Q = p\operatorname{diag}[q, \dots, q]\in pM_{n}(\mathbb{Z})$, where $q \geq 1$. We consider the $Q$-adic expansion with digit set
\begin{equation*}
	\Gamma_{q}=Q\left[-\frac{1}{2},\frac{1}{2}\right)^{n}\cap \mathbb{Z}^{n}=\left\{(i_{1},i_{2},\dots,i_{n})^{t}\in\mathbb{Z}^{n}:-\dfrac{pq}{2}\le i_{1},i_{2},\dots,i_{n}<\dfrac{pq}{2}\right\},
\end{equation*}
which forms a complete residue system modulo $Q$ in $\mathbb{Z}^{n}$. Therefore, every element of $\mathbb{Z}^{n}$ has a unique $Q$-adic expansion. Furthermore, $\Gamma_q$ can be decomposed into a disjoint union:
\[
\Gamma_q = \bigcup_{\bm{b} \in B_q} \mathcal{A}_{\bm{b}},
\]
where $\mathcal{A}_{\bm{b}} \equiv \bm{b} + A_q \pmod{Q}$ is a maximal orthogonal set for $\mu_{Q,D}$ in $\Gamma_q$,
\begin{equation*}
	A_q = q\{\bm{0}, \bm{a}, \dots, (p-1)\bm{a}\},
\end{equation*}
and
\begin{equation*}
	B_q = \left\{(i_1, i_2, \dots, i_n)^t \in \mathbb{Z}^n : -\frac{q}{2} \le i_1 < \frac{q}{2},\ -\frac{pq}{2} \le i_2, \dots, i_n < \frac{pq}{2}\right\}.
\end{equation*}
We will prove this decomposition in Section 2. Let $\Sigma_{p} = \{0, 1, \dots, p-1\}$, and for $k \ge 1$, let $\Sigma_{p}^{k} = \{\mathbf{I} = i_{1}i_{2}\dots i_{k} : i_j \in \Sigma_{p} \text{ for all } j = 1,2,\dots,k\}$. We define $\Sigma_{p}^{*} = \cup_{k=0}^{\infty}\Sigma_{p}^{k}$, where $\Sigma_{p}^{0} = \{\emptyset\}$. Note that $\Sigma_{p}^{*}$ can be interpreted as a tree: the root is the empty set $\emptyset$, and for each node $\mathbf{I}$, its child nodes are $\mathbf{I}\sigma$ for all $\sigma \in \Sigma_{p}$ (or equivalently, the set of child nodes is $\mathbf{I}\Sigma_{p}$).
\begin{defn}\label{defn1.3}
	 We call a map $ \gamma: \Sigma_{p}^{*}\rightarrow \Gamma_{q} $ a maximal tree map for $ \mu_{Q,D} $ if
\begin{enumerate}
		\item[\rm(i)] $\gamma(0^{k}i_{k+1})=i_{k+1}q\bm{a}\pmod Q $ for $ k\ge0 $ and $i_{k+1}\in\Sigma_{p}$; here, $0^{0}i_{1}=\emptyset i_{1}=i_{1}$;
		\item[\rm(ii)] For $ \mathbf{I}i\in\Sigma_{p}^{*},$ $ \gamma(\mathbf{I}i)=e_{\mathbf{I}}+iq\bm{a}\pmod Q $, where $e_{\mathbf{I}}\in B_{q}; $
		\item[\rm(iii)] For any $ \mathbf{I}\in\Sigma_{p}^{*} $, there exists $ \mathbf{J}\in \Sigma_{p}^{*}  $ such that $ \gamma(\mathbf{IJ}0^{k})=\bm{0} $ for sufficiently large $ k $.
	\end{enumerate}
\end{defn}
We denote
\begin{equation*}
	\Sigma_{p}^{\gamma} = \{\mathbf{I} \in \Sigma_{p}^{\infty} : \gamma(\mathbf{I}|_{k}) = \bm{0} \text{ for all sufficiently large } k\},
\end{equation*}
where $\mathbf{I}|_{k}$ denotes the prefix of $\mathbf{I}$ of length $k$. It is known that $ \Sigma_{p}^{\gamma} \subset \Sigma_{p}^{*}0^{\infty} $. Furthermore, we can define a mapping $\gamma^{*}: \Sigma_{p}^{\gamma} \to \mathbb{Z}^{n}$ by
\begin{equation*}
	\gamma^{*}(\mathbf{I}) = \sum_{k=1}^{\infty}(pq)^{k-1}\gamma(\mathbf{I}|_{k}) \quad \text{for all } \mathbf{I} \in \Sigma_{p}^{\gamma}.
\end{equation*}
\begin{defn}
	Let $\mathbf{I} = i_{1}i_{2}\dots \in \Gamma_{q}^{*} \cup \Gamma_{q}^{\infty}$. If there exists an integer $N$ such that $i_{N} \neq 0$ and $i_{k} = 0$ for all $k > N$, then $N$ is called the effective length of $\mathbf{I}$, denoted by $\ell(\mathbf{I}) = N$. In particular, if $\mathbf{I} = 0^{k}$ or $\mathbf{I} = 0^{\infty}$, then $\ell(\mathbf{I}) = 0$.
\end{defn}
Let $\mathbf{J} = j_{1}j_{2}\dots$, and let $\gamma$ be a maximal tree map for $\mu_{Q,D}$. For any $\mathbf{J} = j_{1}j_{2}\dots \in \Sigma_{p}^{*} \cup \Sigma_{p}^{\infty}$, we define
\begin{equation*}
	\Upsilon_{\gamma}(\mathbf{J}) = \gamma(\mathbf{J}|_{1})\gamma(\mathbf{J}|_{2})\dots \in \Gamma_{q}^{*} \cup \Gamma_{q}^{\infty},
\end{equation*}
where $\Gamma_{q}^{*}$ and $\Gamma_{q}^{\infty}$ are defined analogously to $\Sigma_{p}^{*}$ and $\Sigma_{p}^{\infty}$. We define $\mathbf{J}_{n,m} = j_{n}j_{n+1}\dots j_{m-1}$ for $n < m$, and $\mathbf{J}_{n,m} = \emptyset$ if $n = m$. For any $\mathbf{I} \in \Sigma_{p}^{k}$ and $\mathbf{J} \in \Sigma_{p}^{\infty}$ such that $\mathbf{IJ} \in \Sigma_{p}^{\gamma}$, we decompose the sequence $\Upsilon_{\gamma}(\mathbf{IJ})$ as
\begin{equation}\label{eq2.4}
	\Upsilon_{\gamma}(\mathbf{IJ}) = \Upsilon_{\gamma}(\mathbf{I}) \cdot \Upsilon_{\gamma}(\mathbf{IJ})_{n_{0},n_{1}} \cdot \Upsilon_{\gamma}(\mathbf{IJ})_{n_{1},n_{2}} \dots \Upsilon_{\gamma}(\mathbf{IJ})_{n_{m},n_{m+1}},
\end{equation}
where $n_{0} = k + 1 \le n_{1} < n_{2} < \dots < n_{m+1} = \infty$, and for each $1 \le j \le m$, the last digit of $\mathbf{IJ}|_{n_{j}}$ is $0$ while $\gamma(\mathbf{IJ}|_{n_{j}}) \neq 0$.
Based on \eqref{eq2.4}, we define
\begin{equation*}
	\Theta_{\mathbf{I}}(\mathbf{J}) = \sum_{k=0}^{m}\ell(\Upsilon_{\gamma}(\mathbf{IJ})_{n_{k},n_{k+1}}),
\end{equation*}
where this quantity depends on the partition $\{n_{0}, n_{1}, \dots, n_{m}\}$. By the definition of the maximal tree map $\gamma$, it follows that $\Theta_{\mathbf{I}}(\mathbf{J}) < \infty$.

We now present our results concerning the tree structure.
\begin{thm}\label{thm1.1}
Let $Q = p\mathrm{diag}[q,\dots, q]\in pM_{n}(\mathbb{Z})$ with prime $p>2$ and integer $q\geq1$, $D\subset\mathbb{Z}^{n}$ satisfy \eqref{1.3}. Then $0\in\Lambda \subset \mathbb{R}^{n}$ is a maximal orthogonal set for $\mu_{Q,D}$ if and only if there exists a maximal tree map $\gamma$ such that $\Lambda = \gamma^{*}(\Sigma_{p}^{\gamma})$.
\end{thm}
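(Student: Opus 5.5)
The plan is to reduce everything to a digit-by-digit orthogonality criterion in the $Q$-adic expansion with digit set $\Gamma_q$, and then read the tree off the set of digit prefixes.

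\textbf{Step 1: zero set of $\widehat{\mu}_{Q,D}$.} From \eqref{eq1.2}, $\widehat{\mu}_{Q,D}(\xi)=\prod_{k\ge1}\widehat{\delta}_D(Q^{-k}\xi)$. Together with \eqref{1.3} and $Q=pq I$, this gives
\[
\mathcal{Z}(\widehat{\mu}_{Q,D})=\bigcup_{k\ge1}(pq)^{k-1}q\bigl\{j\bm{a}+p\bm{m}: j\not\equiv 0 \ (\mathrm{mod}\ p),\ \bm{m}\in\mathbb{Z}^n\bigr\}.
\]
Since $\bm 0\in\Lambda$, this shows $\Lambda\subset\mathbb{Z}^n$. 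Each coordinate of an integer vector has a balanced base-$pq$ expansion, so every $\lambda\in\Lambda$ has a unique finite $Q$-adic expansion $\lambda=\sum_{k\ge1}(pq)^{k-1}d_k$ with $d_k\in\Gamma_q$ and $d_k=\bm 0$ eventually.

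\textbf{Step 2: digit criterion.} Let $\lambda\neq\lambda'$ be integer vectors whose expansions first differ at position $k$. Then $\lambda-\lambda'\in\mathcal{Z}(\widehat{\mu}_{Q,D})$ if and only if
\[
d_k-d_k'\equiv jq\bm{a} \pmod Q \quad\text{for some } j\not\equiv0 \ (\mathrm{mod}\ p).
\]
To prove this, compare $(pq)$-adic valuations. Only the term of index $k$ in the zero-set union can match, and the higher digits are absorbed into $p\bm m$. By the decomposition $\Gamma_q=\bigcup_{\bm b\in B_q}\mathcal{A}_{\bm b}$ from Section 2, the criterion says exactly this: $d_k$ and $d_k'$ are distinct elements of the same class $\mathcal{A}_{\bm b}=\bm b+A_q \pmod Q$, and each such class has exactly $p$ elements.

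\textbf{Step 3: necessity.} Let $\Lambda$ be a maximal orthogonal set with $\bm 0\in\Lambda$. Consider the set of length-$k$ digit prefixes of elements of $\Lambda$; these form a tree. By Step 2, the children of a prefix all lie in a single class $\mathcal{A}_{e}$ with $e\in B_q$.

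Maximality forces each occurring prefix to have all $p$ children in that class. Suppose some child $c$ were missing. Take any element of $\Lambda$ below a present sibling, replace the digits from that level on by $c\,0^\infty$, and call the result $\lambda^*$. Then $\lambda^*$ is orthogonal to all of $\Lambda$: at earlier levels it agrees with the existing tree structure, and at this level $c$ differs from the siblings within the same class. This contradicts maximality.

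For the prefix that is the zero string $0^k$, the class is $A_q$ itself, since the child $\bm 0$ lies on the path of $\lambda=\bm 0$. Now index the children by $i\in\Sigma_p$ via $e_{\mathbf I}+iq\bm a$, using the index $0$ for the child through which the path to the eventual-zero tail runs. This defines $\gamma$, and (i) and (ii) hold. Condition (iii) holds because every node has some element of $\Lambda$ below it, and that element has a finite expansion. Finally $\Lambda=\gamma^*(\Sigma_p^\gamma)$ by construction.

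\textbf{Step 4: sufficiency.} Let $\gamma$ be a maximal tree map and set $\Lambda=\gamma^*(\Sigma_p^\gamma)$. Orthogonality of $\Lambda$ follows directly from (ii) and Step 2.

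For maximality, suppose $\lambda\in\mathbb{Z}^n$ is orthogonal to all of $\Lambda\cup\{\bm 0\}$. Walk down the tree along the digits of $\lambda$. At a node $\mathbf I$ reached so far, every sibling subtree contains elements of $\Lambda$ by (iii). So by Step 2 the next digit of $\lambda$ must lie in $e_{\mathbf I}+A_q$, hence it equals $\gamma(\mathbf I i)$ for some $i$. After $\ell(\lambda)$ steps the remaining digits of $\lambda$ are zero. From that point we must follow children whose label is $\bm 0$. Such a child exists at every node whose label class contains $\bm 0$, i.e.\ whenever $e_{\mathbf I}\equiv\bm 0$.

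\textbf{Main obstacle.} The delicate point is the bookkeeping at the zero-tail stage. We must check that the index-$0$ choice is consistent, so that $\Sigma_p^\gamma\subset\Sigma_p^*0^\infty$. We must also check that the path of $\lambda$ genuinely lands in $\Sigma_p^\gamma$ rather than passing through nonzero labels indexed by $0$ (the $n_j$ in \eqref{eq2.4}). I would handle this by showing inductively, using (ii) and $\bm 0\in\mathcal A_{\bm 0}$, that once the digits of $\lambda$ vanish the unique compatible child is the one with label $\bm 0$.
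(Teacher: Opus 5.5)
Your proposal is correct and follows essentially the same route as the paper. Both use finite $Q$-adic expansions over $\Gamma_q$ and a first-differing-digit criterion; the paper extracts that criterion from Proposition~\ref{pro2.9} and $\mathcal{Z}(\widehat{\mu}_{Q,D})\cap(-pq,pq)^n\subset\mathcal{Z}(\widehat{\delta}_{Q^{-1}D})$, while you get it from a cleaner valuation comparison. Both then use maximality to force the digits below each node to fill a whole class $e_{\mathbf I}+A_q \pmod Q$, and a digit-by-digit descent using (iii) for sufficiency.

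Two small remarks. First, your ``main obstacle'' is automatic, because (ii) leaves no freedom in the indexing: index $0$ must be the unique $B_q$-element $e_{\mathbf I}$ of the class, and since $\bm 0\in B_q$, a child labelled $\bm 0$ always has index $0$. In Step 4 the descent path lies in $\Sigma_p^\gamma$ simply because its labels are the digits of $\lambda$. Second, in Step 3 the inclusion $\gamma^*(\Sigma_p^\gamma)\subset\Lambda$ is not literally ``by construction''; it needs one more appeal to maximality, namely that your digit criterion makes $\gamma^*(\mathbf I)$ orthogonal to every element of $\Lambda$.
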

\begin{thm}\label{thm1.2}
Let $Q = p\mathrm{diag}[q,\dots, q]\in pM_{n}(\mathbb{Z})$ with prime $p>2$ and integer $q\geq1$, $D\subset\mathbb{Z}^{n}$ satisfy \eqref{1.3}. Let $\gamma$ be a maximal tree map and $\Lambda = \gamma^{*}(\Sigma_{p}^{\gamma})$. If for each $\mathbf{I} \in \Sigma_{p}^{*}$, there exists a $\mathbf{J}_{\mathbf{I}}$ such that $\mathbf{I}\mathbf{J}_{\mathbf{I}} \in \Sigma_{p}^{\gamma}$ and $\sup_{\mathbf{I} \in \Sigma_{p}^{*}} \Theta_{\mathbf{I}}(\mathbf{J}_{\mathbf{I}}) < \infty$, then $\Lambda$ is a spectrum of $\mu_{Q,D}$.
\end{thm}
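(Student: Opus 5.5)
We will use the standard Jorgensen--Pedersen criterion: an orthogonal set $\Lambda$ with $0\in\Lambda$ is a spectrum of $\mu_{Q,D}$ if and only if
\[
Q_\Lambda(\xi):=\sum_{\lambda\in\Lambda}|\widehat{\mu}_{Q,D}(\xi+\lambda)|^2\equiv 1 \quad\text{for all } \xi\in\mathbb{R}^n .
\]
By Theorem \ref{thm1.1}, $\Lambda=\gamma^*(\Sigma_p^\gamma)$ is a maximal orthogonal set, so $Q_\Lambda(\xi)\le 1$, and $Q_\Lambda$ is an entire function. It therefore suffices to show $Q_\Lambda(\xi)\ge 1$ for $\xi$ in a small neighborhood of $0$.

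The first step is to use the tree structure to obtain a recursive identity. Write $Q=pq I$ and $\widehat\mu(\xi)=\widehat{\delta}_D(Q^{-1}\xi)\widehat\mu(Q^{-1}\xi)$. Group $\Lambda$ by first level: for $\mathbf{I}\in\Sigma_p^k$, put
\[
\Lambda_{\mathbf I}=\{(\gamma^*(\mathbf{IJ})-\gamma^*(\mathbf I 0^\infty))/(pq)^k:\ \mathbf{IJ}\in\Sigma_p^\gamma\},
\]
with $\gamma^*(\mathbf I0^\infty)$ the finite partial sum $\sum_{j\le k}(pq)^{j-1}\gamma(\mathbf I|_j)$. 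Conditions (i)--(ii) of Definition \ref{defn1.3} show that the first-level digits $\gamma(i)=iq\bm a \pmod Q$ run over a coset of $A_q$. Condition \eqref{1.3} then gives $\sum_{i\in\Sigma_p}|\widehat\delta_D(Q^{-1}(\xi+\gamma(\mathbf I i)))|^2=1$ for integer-shifted arguments, since $\{Q^{-1}(e+iq\bm a)\}$ is a spectrum of $\delta_D$ modulo $\mathbb{Z}^n$. Iterating, for every $\mathbf I\in\Sigma_p^k$ we get
\[
Q_\Lambda(\xi)=\sum_{\mathbf I\in\Sigma_p^k}\ \prod_{j=1}^k\bigl|\widehat\delta_D\bigl(Q^{-j}(\xi+\textstyle\sum_{i\le j}(pq)^{i-1}\gamma(\mathbf I|_i))\bigr)\bigr|^2\,Q_{\Lambda_{\mathbf I}}\bigl(Q^{-k}(\xi+\gamma^*(\mathbf I0^\infty))\bigr),
\]
and the weights $w_{\mathbf I}(\xi)$ sum to $1$ over $\Sigma_p^k$.

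Next we bound each $Q_{\Lambda_{\mathbf I}}$ from below by a single term. By hypothesis there is $\mathbf J_{\mathbf I}$ with $\mathbf{IJ_I}\in\Sigma_p^\gamma$, which gives the point $\lambda_{\mathbf I}=\gamma^*(\mathbf{IJ_I})\in\Lambda$. Then $Q_{\Lambda_{\mathbf I}}(\eta)\ge|\widehat\mu(\eta+\text{corresponding element})|^2$. Rewriting $\widehat\mu$ as an infinite product of $\widehat\delta_D(Q^{-j}\cdot)$, the factor indices split along the decomposition \eqref{eq2.4}. Inside each block $\Upsilon_\gamma(\mathbf{IJ})_{n_k,n_{k+1}}$, only $\ell(\cdot)$ nontrivial digits appear before the tail of zeros. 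The tail contributes factors of the form $\widehat\delta_D$ at points within $O((pq)^{-r})$ of a point where $|\widehat\delta_D|=1$, or at least bounded away from $\mathcal Z(\widehat\delta_D)$ uniformly. The total number of "dangerous" factors is controlled by $\Theta_{\mathbf I}(\mathbf J_{\mathbf I})\le M:=\sup\Theta<\infty$. Each such factor is evaluated at a point of the form $Q^{-r}(\text{integer vector in a bounded set})+O(|\xi|)$ that is not in $\mathcal Z(\widehat\delta_D)$, since the digits lie in $\Gamma_q$ and the argument avoids $\frac jp\bm a+\mathbb Z^n$. The remaining factors form a convergent product $\prod(1-C(pq)^{-r})$. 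Hence there is a constant $c>0$ independent of $\mathbf I$ and $k$ with
\[
Q_{\Lambda_{\mathbf I}}\bigl(Q^{-k}(\xi+\gamma^*(\mathbf I0^\infty))\bigr)\ge c^{M}\cdot c'
\]
for $|\xi|$ small. Since only finitely many points of a bounded integer lattice set occur, we take the minimum of $|\widehat\delta_D|$ over that finite set.

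Finally we conclude. Choose $k$ and let $\mathcal I_k$ denote the $\mathbf I$ with $\gamma^*(\mathbf I0^\infty)\in\Lambda$ in the limit. From $Q_\Lambda\le1$ applied to the subsets, the quantity $1-Q_\Lambda(\xi)=\sum_{\mathbf I}w_{\mathbf I}(\xi)(1-Q_{\Lambda_{\mathbf I}}(\cdot))$ is bounded by $(1-\delta)(1-Q_\Lambda)$-type quantities. More precisely, we use the standard argument of Dai--He--Lai and An--Dong--He: set $\alpha=\inf_{|\xi|\le\epsilon}Q_\Lambda(\xi)$ and apply the uniform lower bound $\delta$ to the branches, obtaining $1-\alpha\le(1-\delta)(1-\alpha)$, which forces $\alpha=1$. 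Analyticity then gives $Q_\Lambda\equiv1$.

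The main obstacle is the second step. Showing that the number of factors of $\widehat\mu$ that can be small is exactly governed by $\Theta_{\mathbf I}(\mathbf J_{\mathbf I})$, and that every other factor is uniformly close to $1$, requires a careful use of the block decomposition \eqref{eq2.4} and of the $Q$-adic carries. I would first handle the case $q=1$ and then adapt the argument.
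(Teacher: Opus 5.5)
Your first two steps correspond to Lemmas \ref{lem3.1} and \ref{lem3.2}. The first is the depth-$k$ identity with weights $w_{\mathbf I}(\xi)=|\widehat{\mu}_k(\xi+\gamma^*(\mathbf I))|^2$ summing to $1$. The second is a uniform single-element bound $|\widehat{\mu}(\eta_{\mathbf I}+\lambda'_{\mathbf I})|^2\ge\delta$, which is the paper's constant $c=\delta^{-1}$.

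\textbf{The gap is the concluding step.} The inequality $1-\alpha\le(1-\delta)(1-\alpha)$ with $\alpha=\inf_{|\xi|\le\epsilon}Q_\Lambda(\xi)$ cannot be derived from what you have. The branch terms are $1-Q_{\Lambda_{\mathbf I}}(\eta_{\mathbf I})$ for subtree sets $\Lambda_{\mathbf I}\neq\Lambda$. They are evaluated at $\eta_{\mathbf I}=Q^{-k}(\xi+\gamma^*(\mathbf I0^\infty))$, which are spread over a fixed box rather than the $\epsilon$-ball, so $\alpha$ says nothing about them. Feeding $Q_{\Lambda_{\mathbf I}}\ge\delta$ into your identity gives only $1-Q_\Lambda(\xi)\le1-\delta$. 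Since there is no self-similarity $\Lambda_{\mathbf I}=\Lambda$, you have no way to iterate this.

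The missing idea is a pointwise tail argument:
\begin{enumerate}
\item[(a)] Fix $\xi$ and $\varepsilon$. Because $Q_\Lambda(\xi)\le1$ converges, choose $N$ with $\sum_{\ell(\Upsilon_\gamma(\mathbf I))>N}|\widehat{\mu}(\xi+\gamma^*(\mathbf I))|^2<\varepsilon$.
\item[(b)] The finitely many elements of effective length $\le N$ lie in the branches $\Omega_N|_k$. For $k$ large they recover at least a fraction $1-\varepsilon$ of those branches' weight.
\item[(c)] Every other branch $\mathbf I\in\Sigma_p^k\setminus\Omega_N|_k$ contains only elements of effective length $>N$. Your bound gives $w_{\mathbf I}(\xi)\le\delta^{-1}|\widehat{\mu}(\xi+\lambda_{\mathbf I})|^2$, so these branches have total weight at most $\delta^{-1}\varepsilon$.
\item[(d)] Hence $Q_\Lambda(\xi)\ge(1-\varepsilon)(1-\delta^{-1}\varepsilon)$, which is the paper's argument.
\end{enumerate}
Equivalently, $Q_\Lambda(\xi)\ge\delta+(1-\delta-\varepsilon)S_N(\xi)$ with $S_N(\xi)\to Q_\Lambda(\xi)$. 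This is the correct form of the contraction you were aiming for.

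\textbf{Step 2 also needs a correction:} you cannot use $\mathbf J_{\mathbf I}$ as given. Suppose $e_{\mathbf I}=\bm 0$ and $\mathbf J_{\mathbf I}$ begins with a letter $j\neq0$. Then the first digit after $\mathbf I$ is $jq\bm a\pmod Q\in\mathcal Z(\widehat{\delta}_{Q^{-1}D})$, so $\widehat{\mu}(\eta+\lambda'_{\mathbf I})=0$ at $\eta=\bm0$. Such $\eta$ do occur: for example $\mathbf I=0^k$, where $e_{0^k}=\bm 0$ and $\eta=Q^{-k}\xi$. Your claim that the arguments avoid $\frac jp\bm a+\mathbb Z^n$ is therefore false in this case.

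The fix, as in Lemma \ref{lem3.2}, is as follows:
\begin{enumerate}
\item[(a)] First follow zeros to the least $l$ with $e_{\mathbf I0^l}\neq\bm0$, or take $0^\infty$ if there is none. Only then append a continuation with bounded $\Theta$.
\item[(b)] Every block now begins with a digit in $B_q\setminus\{\bm0\}$, which misses $\mathcal Z(\widehat{\delta}_{Q^{-1}D})$.
\item[(c)] The uniform bound then follows from compactness of the boxes $\zeta+[-\frac56,\frac56]^n$ with bounded $\zeta\in\mathbb Z^n\setminus\mathcal Z(\widehat{\mu})$. The reason is that $\zeta$ is the only lattice point in such a box and $\mathcal Z(\widehat{\mu})\subset\mathbb Z^n$.
\end{enumerate}
This bound must hold uniformly for $\eta$ in that box, not just for small $|\xi|$ as you state.
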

The following corollary follows immediately.
\begin{cor}\label{col1.5}
	The set
	\begin{equation*}
		\Lambda(Q,C_{q}) := \left\{ \sum_{k=0}^{n} Q^{k} C_{q} : n \ge 1 \right\}
		= C_{q} + Q C_{q} + Q^{2} C_{q} + \cdots
	\end{equation*}
	is a spectrum of $\mu_{Q,D}$, where $C_{q} \equiv A_{q} \pmod{Q}$.
\end{cor}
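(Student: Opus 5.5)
The plan is to realise $\Lambda(Q,C_q)$ as $\gamma^{*}(\Sigma_p^{\gamma})$ for one explicit maximal tree map $\gamma$, and then check the hypothesis of Theorem~\ref{thm1.2} with a uniform bound on $\Theta_{\mathbf{I}}(\mathbf{J}_{\mathbf{I}})$.

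First fix the representatives. Since $C_q\equiv A_q \pmod Q$, we may write $C_q=\{c_0,c_1,\dots,c_{p-1}\}$ with $c_i\equiv iq\bm a\pmod Q$. Implicitly $c_0=\bm 0$, which is needed so that $0\in\Lambda$ and the finite sums make sense. Replacing each $c_i$ by its representative in $\Gamma_q$ does not change the residues, so we first treat the case $C_q\subset\Gamma_q$.

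Next define $\gamma(\mathbf{I}i)=c_i$ for every $\mathbf{I}\in\Sigma_p^{*}$ and $i\in\Sigma_p$, with $\gamma(\emptyset)$ irrelevant. Check the three conditions of Definition~\ref{defn1.3}:
\begin{itemize}
\item Condition (i) holds because $c_i\equiv iq\bm a\pmod Q$.
\item Condition (ii) holds with $e_{\mathbf{I}}=\bm 0\in B_q$.
\item Condition (iii) holds with $\mathbf{J}=\emptyset$, since $\gamma(\mathbf{I}0^k)=c_0=\bm 0$ for every $k\ge 1$.
\end{itemize}
Hence $\gamma$ is a maximal tree map. Moreover $\gamma(\mathbf{I}|_k)=\bm 0$ for all large $k$ exactly when $\mathbf{I}$ ends in $0^\infty$, so $\Sigma_p^{\gamma}=\Sigma_p^{*}0^{\infty}$. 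For $\mathbf{I}=i_1\cdots i_N0^\infty$ we get $\gamma^{*}(\mathbf{I})=\sum_{k=1}^{N}(pq)^{k-1}c_{i_k}=\sum_{k=0}^{N-1}Q^{k}c_{i_{k+1}}$, because $Q=pq\,I$. Therefore $\gamma^{*}(\Sigma_p^{\gamma})=\Lambda(Q,C_q)$.

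Now verify the hypothesis of Theorem~\ref{thm1.2}. For any $\mathbf{I}\in\Sigma_p^{*}$ take $\mathbf{J}_{\mathbf{I}}=0^\infty$. Every $\mathbf{IJ}|_{n}$ with $n>|\mathbf{I}|$ ends in $0$ and satisfies $\gamma(\mathbf{IJ}|_n)=\bm 0$, so no index $n_j$ with $j\ge1$ occurs in \eqref{eq2.4}; thus $m=0$. The only block is then $\Upsilon_\gamma(\mathbf{IJ})_{n_0,\infty}=0^\infty$, of effective length $0$. Hence $\Theta_{\mathbf{I}}(\mathbf{J}_{\mathbf{I}})=0$ for all $\mathbf{I}$, the supremum is finite, and Theorem~\ref{thm1.2} shows that $\Lambda(Q,C_q)$ is a spectrum.

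The main obstacle is the case where some $c_i\notin\Gamma_q$, since then $\gamma$ cannot take the value $c_i$. There are two possible fixes. One is to rerun the argument with $\Gamma_q$ replaced by a digit set containing $C_q$; this works provided the proofs of Theorems~\ref{thm1.1} and~\ref{thm1.2} use only the fact that $\Gamma_q$ is a complete residue system modulo $Q$. The other is to use the fact that any set of the form $\Lambda(Q,C)$ with $C\equiv A_q \pmod Q$ is a maximal orthogonal set for $\mu_{Q,D}$ (its mutual differences land in $\mathcal Z(\widehat\mu_{Q,D})$ by \eqref{1.3} and the product formula from \eqref{eq1.2}). By Theorem~\ref{thm1.1} it then equals $\gamma^{*}(\Sigma_p^{\gamma})$ for some maximal tree map $\gamma$, obtained by expanding each element in $\Gamma_q$-digits. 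Under this second route the carries produce nonzero digits after a trailing $0$, and one must show that $\Theta$ remains uniformly bounded. That bound should follow because each carry is at most a bounded multiple of $\max|c_i|/(pq)$, so the extra effective length is uniformly bounded. This second route is the step I would spend the most care on.
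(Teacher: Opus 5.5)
Your treatment of the case $C_q\subset\Gamma_q$ is exactly the paper's proof. You put $e_{\mathbf I}=\bm 0$ for all $\mathbf I$, take $\mathbf J_{\mathbf I}=0^\infty$ so that $\Theta_{\mathbf I}(\mathbf J_{\mathbf I})=0$, and apply Theorem~\ref{thm1.2}; you also supply the identities $\Sigma_p^\gamma=\Sigma_p^*0^\infty$ and $\gamma^*(\Sigma_p^\gamma)=\Lambda(Q,C_q)$, which the paper leaves implicit. In the paper a congruence modulo $Q$ always denotes the representative in $\Gamma_q$ (compare $\mathcal A_{\bm b}\equiv\bm b+A_q\pmod Q$, which is a subset of $\Gamma_q$). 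So this case is the whole corollary.

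Your final paragraph, however, should be discarded. When some representative lies outside $\Gamma_q$ the conclusion is false, so neither repair can work. Here is a counterexample:
\begin{itemize}
\item Take $n=1$, $p=3$, $q=2$, $Q=6$, $D=\{0,1,2\}$. Then $\bm a=1$, $A_q=\{0,2,4\}$, and $\mathcal Z(\widehat\mu_{Q,D})=\{6^jm: j\ge0,\ m\equiv 2,4\pmod 6\}$.
\item Let $C=\{0,2,10\}\equiv A_q\pmod 6$.
\item For $\lambda=\sum_k6^kc_k\in\Lambda(6,C)$, let $j$ be the first index with $c_j\neq10$. Since $\sum_{k<j}10\cdot6^k=2\cdot6^j-2$, we get $-2-\lambda=-6^j(2+c_j+6w)$ with $w\in\mathbb Z$ and $2+c_j\in\{2,4\}$. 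Hence $-2-\lambda\in\mathcal Z(\widehat\mu_{Q,D})$.
\item So $\Lambda(6,C)\cup\{-2\}$ is orthogonal, and $\Lambda(6,C)$ is not a spectrum. In the language of Lemma~\ref{lem2.4}, $\{2\}$ is a nonzero $D$-cycle, since $2=(2+10)/6$ and $|\widehat{\delta}_D(2)|=1$.
\end{itemize}

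This example locates the failing steps.
\begin{itemize}
\item \emph{Second route.} Your parenthetical argument gives only orthogonality, and maximality is exactly what breaks. The carry heuristic is also wrong: a carry of bounded size can travel through arbitrarily many digits. For instance, $2\cdot6^m-2=\sum_{k<m}10\cdot 6^k\in\Lambda(6,C)$ has $\Gamma_q$-digits $-2,0,\dots,0,2$. So no uniform bound on the extra effective length exists.
\item \emph{First route.} The proviso fails. Theorems~\ref{thm1.1} and~\ref{thm1.2} use that every integer vector has a \emph{finite} expansion with digits in $\Gamma_q$. Lemma~\ref{lem3.2} uses the box bound $\Gamma_q\subset[-\frac{pq}{2},\frac{pq}{2})^n$. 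In the complete residue system $\{-3,-1,0,1,2,10\}\supset C$, the integer $-2=10+6\cdot(-2)$ has no finite expansion, and it is precisely the missing frequency.
\end{itemize}
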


\medskip

\subsection{\bf{Spectral eigenmatrix}}

Let $\mu$ be a singular spectral measure in $\mathbb{R}^{n}$. Spectral eigenmatrix problems generally fall into two categories:
\begin{enumerate}
	\item[\rm(i)] Given a fixed spectrum $\Lambda$, determine all $R \in M_{n}(\mathbb{R})$ for which $R\Lambda$ is a spectrum of $\mu$ (the first type spectral eigenmatrix);
	\item[\rm(ii)] Determine all matrices $R \in M_{n}(\mathbb{R})$ for which there exists a spectrum $\Lambda$ such that $R\Lambda$ is a spectrum of $\mu$ (the second type spectral eigenmatrix).
\end{enumerate}
%In both cases,  $R$ is called a spectral eigenmatrix of $\mu$ and $\Lambda$ is called
%a eigenspectrum of $\mu$ corresponding to $R$.

Spectral eigenmatrix problems are also referred to as scaling matrix problems. The spectral eigenmatrix problem is not only interesting in itself, but also closely related to other relevant fields such as Fourier analysis and number theory \cite{DH, DJ1}. The study of spectral eigenmatrix problems dates back to \L aba and Wang \cite{LWInvent}, who first identified a countable set $\Lambda$ such that both $\Lambda$ and $2\Lambda$ are spectra of a measure $\mu$. In 2012, Dutkay and Jorgensen \cite{DJ1} proved that $R = 5^{k}$ ($k \in \mathbb{N}$) is a spectral eigenmatrix of $\mu_{4,\{0,2\}}$ with respect to the spectrum
\[
\Lambda := \left\{ \sum_{k=1}^{n} 4^{k-1}d_{k} : d_{k} \in \{0, 1\},\ n\in\mathbb{N} \right\}.
\]
Dutkay and Haussermann \cite{DH} showed that if $p$ is a prime greater than $3$, then $p^{n}\Lambda$ is also a spectrum of $\mu_{4,\{0,2\}}$ for any integer $n \ge 1$. However, finding all spectra of an arbitrary singular spectral measure $\mu$ is generally a difficult problem. Recently, Lu obtained a rather complete result; interested readers may consult \cite {L1}. In 2022, An, Dong and He \cite{ADH} studied the spectral eigenmatrix problems of the Sierpinski-type measure $\mu_{Q,D}$ generated by an expanding matrix $Q=\operatorname{diag}[3q,3q]$ and $D=\{(0,0)^{t},(1,0)^{t},(0,1)^{t}\}$. Subsequently, Liu, Tang and Wu \cite{LTW} discussed the spectral eigenmatrix problems of the planar self-affine measure $\mu_{Q,D}$ generated by an expanding integer matrix $Q\in M_{2}(2\mathbb{Z})$ and the four-element digit set $D=\{(0,0)^{t},(1,0)^{t},(0,1)^{t},(-1,-1)^{t}\}$. For more on spectral eigenmatrix problems, see \cite{LA,LLTW,HTW,LW,FHW,WW} and so on.

Let $\mu_{Q,D}$ denote the self-affine measure generated by a finite digit set $D \subset \mathbb{Z}^{n}$ (satisfying \eqref{1.3}) and an expanding matrix
$Q = p\mathrm{diag}[l_{1}, l_{2}, \dots, l_{n}] \in pM_{n}(\mathbb{Z})$,
where $p>2$ is prime.
Motivated by the works mentioned above, the aim of the present paper is to investigate the second type spectral eigenmatrix of  $\mu_{Q,D}$.

Theorem \ref{thm1.2} provides a sufficient condition for a maximal orthogonal set of $\mu_{Q,D}$ to be spectral.
We also use it to establish a sufficient condition for a matrix to be the second type spectral eigenmatrix of $\mu_{Q,D}$.
%Our main result on spectral eigenmatrices is presented in the following theorem.
\begin{thm}\label{thm1.3}
	Let $Q = p\mathrm{diag}[q,\dots, q] \in pM_{n}(\mathbb{Z})$ with prime $p>2$ and integer $q > 1$, $D\subset\mathbb{Z}^{n}$ satisfy \eqref{1.3}.
	Then $R\in M_{n}(\mathbb{Z})$ is a second type spectral eigenmatrix of the spectral measure $\mu_{Q,D}$
	if and only if there exists $k\in\{1,2,\dots,p-1\}$ such that $R\bm{a}\equiv k\bm{a}\pmod {p\mathbb{Z}^{n}} $.
\end{thm}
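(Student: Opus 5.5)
Necessity and sufficiency are handled separately.

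\emph{Necessity.} Suppose $\Lambda$ and $R\Lambda$ are both spectra of $\mu_{Q,D}$. We may translate so that $\bm{0}\in\Lambda$, since $R(\Lambda-\lambda_0)=R\Lambda-R\lambda_0$ is again a spectrum. Then $\Lambda\setminus\{\bm 0\}\subset\mathcal{Z}(\widehat{\mu}_{Q,D})$, and by \eqref{eq1.2} and \eqref{1.3},
\[
\mathcal{Z}(\widehat{\mu}_{Q,D})=\bigcup_{k\ge1}Q^{k}\Bigl(\bigcup_{j=1}^{p-1}\bigl(\tfrac{j}{p}\bm a+\mathbb{Z}^n\bigr)\Bigr).
\]
A spectrum is maximal orthogonal, so it cannot lie inside a subgroup annihilating too much. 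In particular, not all of $\Lambda\setminus\{\bm0\}$ can lie in $Q^{2}\mathcal{Z}(\widehat\delta_D)\cup\cdots$. Otherwise the first-level factor $\widehat{\delta}_{Q^{-1}D}$ would never vanish on the differences, contradicting the Parseval/completeness identity $\sum_{\lambda}|\widehat\mu(\xi+\lambda)|^2\equiv1$.

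Hence there exist $\lambda\in\Lambda$ with $\lambda\in Q(\frac{j}{p}\bm a+\mathbb{Z}^n)=jq\bm a+Q\mathbb{Z}^n$ for some $j\neq0$. Equivalently, modulo $p$, after dividing by the $q$-part, we get $\lambda/q\equiv j\bm a\pmod{p}$. Since $R\lambda\in R\Lambda\setminus\{\bm0\}$ must also be a zero, we need $R\lambda\in Q^{k}(\frac{j'}{p}\bm a+\mathbb{Z}^n)$ for some $k\ge1$.

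The cleanest route is to use the full tree structure of Theorem \ref{thm1.1}. At the first level, $\Lambda$ contains representatives $\lambda_i\equiv iq\bm a\pmod Q$ for every $i\in\Sigma_p$. Applying $R$, the differences $R(\lambda_i-\lambda_{i'})$ are all zeros. If $R\bm a\not\equiv k\bm a\pmod p$ for every $k$ (including $k=0$), then $R(iq\bm a+Q\bm z)=q(iR\bm a+pR\bm z)$. The level-one condition requires $iR\bm a\equiv$ a multiple of $\bm a$ mod $p$, which fails. A higher level would require $q\,iR\bm a\in p\,\mathrm{diag}(pq)\mathbb{Z}^n$, i.e.\ $R\bm a\equiv\bm0\pmod p$.

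So the case to exclude is $R\bm a\equiv\bm0\pmod p$. In that case the whole $R\Lambda$ lies in $pq\mathbb{Z}^n\cup\ldots$, i.e.\ $R\Lambda\subset Q\mathbb{Z}^n$. I will argue that a spectrum cannot lie in $Q\mathbb{Z}^n$: its elements would have no nonzero first digit, so $Q^{-1}R\Lambda$ would be orthogonal for $\mu_{Q,D}$ as well. Then completeness fails, because $\sum_\lambda|\widehat\mu(\xi+\lambda)|^2=|\widehat\delta_{Q^{-1}D}(\xi)|^2\sum|\widehat\mu(Q^{-1}\xi+\cdot)|^2\le|\widehat\delta_{Q^{-1}D}(\xi)|^2<1$ for suitable $\xi$. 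The non-integer issue here ($q$ and $p$ coprime or not) needs care: I reduce modulo $p$ using that $\bm a$ has entries in $[1,p-1]$. This necessity step, in particular handling which power $Q^k$ a given $R\lambda$ falls into, is where I expect the main bookkeeping.

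\emph{Sufficiency.} Assume $R\bm a\equiv k\bm a\pmod p$ with $1\le k\le p-1$. I take $\Lambda=\Lambda(Q,C_q)$ with $C_q=A_q=q\{\bm0,\bm a,\dots,(p-1)\bm a\}$, which is a spectrum by Corollary \ref{col1.5}. Then $RC_q\equiv q\{0,k\bm a,\dots,(p-1)k\bm a\}\equiv A_q\pmod{Q}$, because $\{ik\}$ permutes $\Sigma_p$ mod $p$ and $qR\bm a-qk\bm a\in pq\mathbb{Z}^n=Q\mathbb{Z}^n$. Since $R$ commutes with $Q=pq I$, we get $R\Lambda=\sum_j Q^{j}(RC_q)$ with $RC_q\equiv A_q\pmod Q$ and $\bm0\in RC_q$.

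The remaining issue is that Corollary \ref{col1.5} asks $C_q\equiv A_q$ with $C_q$ presumably a bounded finite set; $RC_q$ is such a set, so the corollary applies directly. If it required $C_q\subset\Gamma_q$, I would instead verify the hypothesis of Theorem \ref{thm1.2}. The $Q$-adic digits of $\sum Q^jc_j$ with $c_j\in RC_q$ have carries bounded by $\max|RC_q|/(pq-1)$, which gives $\sup\Theta<\infty$, and the tree map conditions (i)--(iii) follow from the residue classes. Hence $R\Lambda$ is a spectrum.
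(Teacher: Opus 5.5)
Your necessity argument is sound and follows the paper's route through Lemma~\ref{lem4.1} and Lemma~\ref{lem4.2}(i). The only difference is that the paper applies Lemma~\ref{lem4.1} to $R\Lambda$ rather than to $\Lambda$. This gives some $\lambda\in\Lambda$ with $R\lambda\equiv q\bm a\pmod Q$ and excludes $R\bm a\equiv\bm 0\pmod p$ at once, whereas you need your (correct) extra Parseval argument that no spectrum lies in $Q\mathbb{Z}^n$.

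The sufficiency part has a genuine gap. You fix $\Lambda=\Lambda(Q,A_q)$ for every admissible $R$. You then use that $\Lambda(Q,C)$ is a spectrum for any finite $C\ni\bm 0$ with $C\equiv A_q\pmod Q$. That claim is false, and your $\Lambda$ fails for some admissible $R$.

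Take $R=(pq-1)I_n$. Then $R\bm a\equiv(p-1)\bm a\pmod{p\mathbb{Z}^n}$ and $R\Lambda=\Lambda(Q,(pq-1)A_q)$. The point $q\bm a=\sum_{j\ge1}Q^{-j}(pq-1)q\bm a$ lies in $T(Q,(pq-1)A_q)$, and $Q^{-1}\bigl(q\bm a+(pq-1)q\bm a\bigr)=q\bm a$. Since $\widehat{\delta}_D=1$ on $\mathbb{Z}^n$, $\{q\bm a\}$ is a nonzero $D$-cycle, so by Lemma~\ref{lem2.4} $R\Lambda$ is not a spectrum.

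You can also check this directly. Each element of $R\Lambda$ is $(pq-1)qN\bm a$ with $N=\sum_k i_k(pq)^k$, $i_k\in\Sigma_p$. The integer $m=1+(pq-1)N$ satisfies $m\equiv 1-i_0\pmod{pq}$, and $i_0=1$ gives $m=pq\bigl(1+(pq-1)N_1\bigr)$. Iterating, $m=(pq)^e m'$ with $p\nmid m'$. Hence $(pq-1)qN\bm a+q\bm a=Q^{e+1}\bigl(\tfrac{m'}{p}\bm a\bigr)\in\mathcal{Z}(\widehat{\mu}_{Q,D})$. So $R\Lambda\cup\{-q\bm a\}$ is still orthogonal, with $-q\bm a\notin R\Lambda$.

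Your fallback via Theorem~\ref{thm1.2} breaks for the same reason. The digit tree of the sums $\sum_j Q^j c_j$, $c_j\in RA_q$, does define a maximal tree map $\gamma$. However, the carries can loop while emitting the digit $\bm 0$; for example, carry $q\bm a$ plus digit $(pq-1)q\bm a$ equals $Q(q\bm a)$. So $\Sigma_p^{\gamma}$ contains paths that correspond to no finite sum. Then $\gamma^{*}(\Sigma_p^{\gamma})$, which is maximal by Theorem~\ref{thm1.1}, strictly contains the non-maximal set $R\Lambda$. Saying that conditions (i)--(iii) ``follow from the residue classes'' therefore does not give $R\Lambda=\gamma^{*}(\Sigma_p^{\gamma})$. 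Corollary~\ref{col1.5} holds only for the carry-free representatives $C_q\subset\Gamma_q$, which is how the paper proves it (all $e_{\mathbf I}=\bm 0$).

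The missing idea is that the spectrum must be chosen depending on $R$ so as to destroy such cycles. The paper does this in Theorem~\ref{thm4.10}:
\begin{itemize}
\item It uses the tree maps $\gamma_k$ of Definition~\ref{defn4.8}, which insert a nonzero $e_{\bm a}$ at level $k$ so that no length-$k$ digit block is a nontrivial repetition (Proposition~\ref{pro4.9}).
\item It checks that $(Q^k,D_k,R\gamma_k^{*}(\Sigma_p^k))$ is a Hadamard triple (Lemma~\ref{lem4.6}).
\item It applies the cycle criterion (Lemma~\ref{lem4.7}) with $k>N_R$, where $N_R$ bounds the periods of the points of $T(Q,\Gamma_q)\cap R^{-1}\mathbb{Z}^n$. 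This rules out nonzero $D_k$-cycles in $T(Q^k,R\gamma_k^{*}(\Sigma_p^k))$.
\end{itemize}
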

When \( R \) is a real matrix, determining its spectral eigenmatrix can be challenging. The following theorem addresses this problem specifically for the case when \( R \) is a real diagonal matrix. Under the same assumptions, it also considers a more general form for the expansion matrix \( Q \).
\begin{thm}\label{thm1.4}
	Let $Q = p\mathrm{diag}[l_{1}, l_{2}, \dots, l_{n}] \in pM_{n}(\mathbb{Z})$ with  prime $p>2$ and $|l_{1}l_{2}\dots l_{n}| > 1$, $D\subset\mathbb{Z}^{n}$ satisfy \eqref{1.3}.
	Then a real diagonal matrix $R \in M_{n}(\mathbb{R})$ is a second type  spectral eigenmatrix of the spectral measure $\mu_{Q,D}$ if and only if \( R = \diag[\frac{p_{1}}{q_{1}}, \frac{p_{2}}{q_{2}}, \dots, \frac{p_{n}}{q_{n}}] \in M_{n}(\frac{\mathbb{Z} \backslash p\mathbb{Z}}{\mathbb{Z} \backslash p\mathbb{Z}}) \)
	and there exists $k \in \{1,2,\dots,p-1\}$ such that
	\[
	\mathrm{diag}[p_{1},\dots,p_{n}]\,\bm{a}
	\equiv k\,\mathrm{diag}[q_{1},\dots,q_{n}]\,\bm{a} \pmod{p\mathbb{Z}^{n}}.
	\]
\end{thm}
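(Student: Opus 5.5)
Both directions of Theorem \ref{thm1.4} rest on one basic fact, which I would record first: by \eqref{eq1.2} and \eqref{1.3},
\[
\mathcal{Z}(\widehat{\mu}_{Q,D})=\bigcup_{k\ge1}Q^{k}\Big(\bigcup_{j=1}^{p-1}\big(\tfrac{j}{p}\bm a+\mathbb{Z}^n\big)\Big).
\]
Hence $\Lambda$ is orthogonal if and only if every difference $\lambda-\lambda'$ with $\lambda\neq\lambda'$ lies in some $Q^k(\tfrac jp\bm a+\mathbb{Z}^n)$. Since $Q=p\operatorname{diag}[l_1,\dots,l_n]$ is diagonal, the zero set is a union of lattice cosets whose coordinates are governed coordinatewise by $pl_i$. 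I also take as given the spectrality criterion from \cite{CY}: $\mu_{Q,D}$ is spectral, because $Q^t\bm a\in p\mathbb{Z}^n$.

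\textbf{Necessity.} Suppose $\Lambda$ and $R\Lambda$ are both spectra, with $R=\operatorname{diag}[r_1,\dots,r_n]$ and $0\in\Lambda$ (translate if necessary). The first step is to show that every $r_i$ is rational. Spectrality of $\Lambda$ forces $\Lambda-\Lambda\subset\mathcal{Z}\cup\{0\}$, so each coordinate of every $\lambda\in\Lambda$ lies in $\frac{1}{p}\mathbb{Z}\cdot(\text{integers})$, and the same holds for $R\Lambda$. I then need a $\lambda\in\Lambda$ whose $i$-th coordinate is nonzero, so that $r_i\lambda_i\in\mathbb{Q}$ gives $r_i\in\mathbb{Q}$. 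Such a $\lambda$ exists because $\Lambda\subset\mathcal{Z}\cup\{0\}$ and each coordinate of $\frac jp\bm a$ is nonzero mod $1$ (since $a_i\in[1,p-1]$).

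Next I would show the $p$-adic valuations of the numerator and denominator of $r_i$ are both $0$, i.e.\ $r_i=p_i/q_i$ with $p\nmid p_iq_i$. I would compare $p$-adic valuations of the coordinates of $\lambda-\lambda'$ and of $R(\lambda-\lambda')$. An element of $Q^k(\frac jp\bm a+\mathbb{Z}^n)$ has $i$-th coordinate with exact $p$-adic valuation $v_p(l_i^k)+k-1$, and the level $k$ is the same across coordinates. If $v_p(r_i)\neq0$, multiplication by $R$ would shift the valuations coordinatewise. I expect to get a contradiction either through a mismatch of levels between coordinates, or, when all $r_i$ shift together, by a density or cardinality argument. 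This uses that $\Lambda$ must contain differences at every level: completeness forces $\sum_\lambda|\widehat\mu(\xi-\lambda)|^2=1$, which prevents $\Lambda$ from missing the first level.

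Finally, take a difference at level $k$, $\lambda-\lambda'=Q^k(\frac jp\bm a+\bm m)$. Its image under $R$ must also be at level $k$ (by valuations), so $R(\frac jp\bm a+\bm m)\in\frac{j'}{p}\bm a+\mathbb{Z}^n$ after clearing the $p$-free denominators $q_i$. Multiplying by $p$ and reducing mod $p$ gives $j\operatorname{diag}[p_i]\bm a\equiv j'\operatorname{diag}[q_i]\bm a$, which yields the congruence with $k=j'j^{-1}\bmod p$.

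\textbf{Sufficiency.} Let $P=\operatorname{diag}[p_i]$, $M=\operatorname{diag}[q_i]$ and $N=q_1\cdots q_n$. I would construct the spectrum
\[
\Lambda=\operatorname{diag}[N/q_i]\,\Lambda_0,\qquad \Lambda_0=\Lambda(Q,C)=C+QC+Q^2C+\cdots
\]
in the style of Corollary \ref{col1.5}. Here $C\equiv\{0,\bm a,\dots,(p-1)\bm a\}\cdot c$ is chosen with entries in $\prod_i l_i\mathbb{Z}$ modulo $Q$-type residues, so that it is a valid spectrum for general diagonal $Q$. I expect the analog of Corollary \ref{col1.5} for the diagonal case to follow from the standard criterion, e.g.\ by choosing $C$ inside a small box $Q[-\frac12,\frac12)^n$ and applying a \L aba--Wang type argument. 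Then $R\Lambda=\operatorname{diag}[Np_i/q_i^2]\Lambda_0$. The choice of scaling should be adjusted so that both sets are of the form $\operatorname{diag}[s_i]\Lambda_0$ with integer $s_i$ prime to $p$ and $\operatorname{diag}[s_i]\bm a\equiv k\bm a\pmod p$. A cleaner alternative is to set $\Lambda=M\Lambda_0'$ and $R\Lambda=P\Lambda_0'$, which reduces everything to the integer case of Theorem \ref{thm1.3}.

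It remains to check that $\operatorname{diag}[s_i]\Lambda_0$ is again a spectrum. Orthogonality follows from the congruence, which maps $\frac jp\bm a$-cosets to $\frac{jk}{p}\bm a$-cosets, together with $p\nmid s_i$ preserving levels. Completeness requires a digit set $\operatorname{diag}[s_i]C$ whose entries remain bounded relative to $Q$. I would use the freedom to pick $C$ modulo $Q$ and choose representatives making the expansions finite.

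I expect this completeness step, with its bounded-digit, no-cycle verification for the rescaled set, to be the main obstacle. The necessity step ruling out $p$-adic shifts in $r_i$ is the secondary difficulty.
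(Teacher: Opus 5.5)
Your necessity argument is sound and is essentially the paper's route. The valuation bookkeeping forces $R$ to shift levels by a constant $d$. The case $d\neq0$ is excluded by the fact (the paper's $S'=S$ step) that every spectrum containing $\bm{0}$ has first-level elements in each class $Q(\frac{j}{p}\bm{a}+\mathbb{Z}^n)$: apply it to $R\Lambda$ when $d>0$ and to $\Lambda$ when $d<0$. The final reduction mod $p$ is the same as in the paper.

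The sufficiency direction has a genuine gap. First, the reduction is set up incorrectly. Your first scaling $\operatorname{diag}[Np_i/q_i^2]$ is not integral in general. The ``cleaner alternative'' $\Lambda=M\Lambda_0'$, $R\Lambda=P\Lambda_0'$ cannot work either: by your own necessity argument, $M\Lambda_0'$ can be a spectrum only if $M\bm{a}\equiv c\,\bm{a}\pmod{p\mathbb{Z}^n}$, and the hypothesis does not give this. For example, take $p=3$, $\bm{a}=(1,1)^t$, $R=\operatorname{diag}[2,\frac12]$. The hypothesis holds with $k=2$, yet $M\bm{a}=(1,2)^t$ and $P\bm{a}=(2,1)^t$ are not multiples of $\bm{a}$ mod $3$.

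The paper repairs this by twisting both factors with a common $K=\operatorname{diag}[1,t_2,\dots,t_n]$, where $t_iq_i\equiv q_1\pmod p$. Then $R_1=KP$ and $R_2=KM$ both satisfy the scalar congruence, and $R=R_1R_2^{-1}$. It therefore suffices to find one spectrum $\Lambda_0$ with $R_1\Lambda_0$ and $R_2\Lambda_0$ both spectra, since $R(R_2\Lambda_0)=R_1\Lambda_0$. Theorem \ref{thm1.3} cannot supply such a $\Lambda_0$. It is proved only for scalar $Q=p\operatorname{diag}[q,\dots,q]$, whereas here $Q$ is any diagonal matrix with $|l_1\cdots l_n|>1$ (e.g.\ $\operatorname{diag}[6,3]$). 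As stated it also concerns a single matrix, not two at once.

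Second, and more seriously, the completeness step is left open; you flag it yourself as the main obstacle. ``Choosing representatives of $C$ modulo $Q$ to make expansions finite'' does not address it. The danger is that the attractor of the rescaled digit set $R_iC$, which is large when $R_i$ is large, meets $\mathcal{Z}(\widehat{\mu}_{Q,D})$. A fixed $C$ gives no control over this for two different $R_i$ simultaneously.

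The missing idea is the paper's sign-pattern construction.
\begin{enumerate}
\item[(1)] Use the digits $\pm p^{-1}QS=\pm\operatorname{diag}[l_1,\dots,l_n]S$ with a sign sequence $\mathbf{I}\in\{-1,1\}^{\mathbb{N}}$. Since some $|l_j|\ge2$, every point of $T(Q,\pm p^{-1}QS)$ has $j$-th coordinate of modulus $<1$. So this attractor misses $\mathcal{Z}(\widehat{\mu}_{Q,D})$, all of whose points have nonzero integer coordinates. Lemma \ref{lem2.7} then makes every $\Lambda(Q,p^{-1}\mathbf{I}QS)$ a spectrum (Lemma \ref{lem5.1}).
\item[(2)] Because diagonal matrices commute, $R_i\Lambda(Q,p^{-1}\mathbf{I}QS)=\Lambda(Q,p^{-1}\mathbf{I}QR_iS)$.
\item[(3)] The compact sets $T(Q,\pm p^{-1}QR_iS)$ meet $\mathcal{Z}(\widehat{\mu}_{Q,D})$ in finitely many points. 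Each such point has a unique expansion, again because $|l_j|\ge2$. That expansion is not finite, as one sees by reducing mod $p$ using $R_i\bm{a}\equiv c_i\bm{a}$ with $p\nmid c_i$ (Lemma \ref{lem5.2}).
\item[(4)] Hence one can choose a single periodic sign pattern, a block of $-1$'s followed by a block of $+1$'s, that disagrees with each of these finitely many expansions at a nonzero digit. The attractors of all the rescaled periodic digit sets then avoid $\mathcal{Z}(\widehat{\mu}_{Q,D})$ for every $R_i$ at once.
\item[(5)] Lemma \ref{lem2.7}, applied to $\mu_{Q^{K+H+1},D_{K+H+1}}=\mu_{Q,D}$, finishes the argument (Lemma \ref{lem5.3}).
\end{enumerate}
Without some mechanism of this kind, your sufficiency argument does not go through.
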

\begin{remark}
For digit set $D$ satisfying \eqref{1.3}, there exists a vector $\bm{a}$ corresponding to the vector $\bm{a}$ in Theorems \ref{thm1.3} and \ref{thm1.4}. Specifically, in \( \mathbb{R}^{2} \), if the matrix \( Q = \diag[3q, 3q] \) and the digit set \( D = \{(0,0)^{t}, (1,0)^{t}, (0,1)^{t}\} \), then it is straightforward to verify that \( \bm{a} = (1,2)^{t} \).
\end{remark}
The structure of this paper is outlined as follows.
In Section 2, we introduce the necessary notation and basic lemmas related to spectral measures, which are used throughout the paper.
In Section 3, we first characterize the structure of maximal orthogonal sets for $\mu_{Q,D}$ and then give a sufficient condition for such sets to be spectra of $\mu_{Q,D}$.
In Section 4, the necessity part of Theorem \ref{thm1.3} is established in Lemma \ref{lem4.2}, whereas the sufficiency part is derived in Theorem \ref{thm4.10}.
In Section 5, we prove Theorem \ref{thm1.4} and illustrate our main results with illustrative examples.

\section{\bf{Preliminaries}}
In the section, we give some preliminary lemmas and notations. Firstly, it is easy to show that $\Lambda$ is an orthogonal set of $\mu$ if and only if $\widehat{\mu}(\lambda-\lambda ')=0$ for any $\lambda\neq \lambda '\in \Lambda$. In other word, $E_{\Lambda}=\{e^{-2\pi i \langle\lambda, x\rangle}:\lambda\in \Lambda\}$ is an orthogonal family of $ L^2(\mu) $ if and only if
 $ (\Lambda-\Lambda)\backslash\{\bm{0}\}\subset\mathcal{Z}( \widehat{\mu})  $. We say that a discrete set $\Lambda$ is a bi-zero set of $\mu$ if
 $  E_\Lambda = \{ e^{2\pi i \langle \lambda, x \rangle} : \lambda \in \Lambda\} $
 is an orthogonal subset of $L^2(\mu)$.
 Since $D\subset\mathbb{Z}^{n}$, it is not difficult to deduce that $\widehat{\delta}_{Q^{-1}D}$ is a $Q\mathbb{Z}^{n}$-period function. From \eqref{eq1.2}, it follows that
\begin{equation*}
	\mathcal{Z}(\widehat{\mu}_{Q,D})=\bigcup_{k=0}^{\infty}Q^{k}\mathcal{Z}(\widehat{\delta}_{Q^{-1}D})=\bigcup_{k=1}^{\infty}Q^{k}\bigcup_{j=1}^{p-1}\left(\frac{j}{p}\bm{a}+\mathbb{Z}^{n}\right).
\end{equation*}
 Define the function
 \begin{equation*}
 	Q_{\mu,\Lambda}(\xi)=\displaystyle{\sum_{\lambda\in \Lambda}}|\widehat{\mu}(\xi+\lambda)|^2,\quad \xi \in \mathbb{R}^{n}.
 \end{equation*}
 The following theorem is a basic criterion for the spectrality of $\mu$.
    \medskip
 \begin{lemma}\cite{JP1}\label{lem2.2}
    Let $\mu$ be a Borel probability measure with compact support in $\mathbb{R}^{n}$, and let $ \Lambda \subset \mathbb{R}^{n} $ be a countable subset. Then
 \begin{enumerate}
 	\item[\rm (i)] $E_{\Lambda}$ is an orthogonal family of $L^2(\mu)$ if and only if $Q_{\mu,\Lambda}(\xi)\leq1$ for $\xi\in \mathbb{R}^{n}$;
 		\item[\rm (ii)]$E_{\Lambda}$ is an orthogonal basis for $L^2(\mu)$ if and only if $Q_{\mu,\Lambda}(\xi)=1$ for $\xi\in \mathbb{R}^{n}$;
 			\item[\rm(iii)]$Q_{\mu,\Lambda}(\xi)$ has an entire analytic extension to $\mathbb{C}^{n}$ if $\Lambda$ is an orthogonal set of $\mu$.
\end{enumerate}
\end{lemma}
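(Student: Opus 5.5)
The statement is the Jorgensen–Pedersen criterion, cited from \cite{JP1}. I would prove it by computing the function $Q_{\mu,\Lambda}$ in terms of the exponentials $e_\lambda(x)=e^{-2\pi i\langle\lambda,x\rangle}$, which all have unit norm since $\mu$ is a probability measure. A direct computation gives $\langle e_{-\xi},e_\lambda\rangle_{L^2(\mu)}=\widehat{\mu}(\xi+\lambda)$, up to a sign convention absorbed by replacing $\xi$ by $-\xi$. Hence
\[
Q_{\mu,\Lambda}(\xi)=\sum_{\lambda\in\Lambda}|\langle e_{-\xi},e_\lambda\rangle|^2 .
\]
Note also that $\langle e_\lambda,e_{\lambda'}\rangle=\widehat{\mu}(\lambda-\lambda')$, up to the same sign convention. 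So orthogonality of $E_\Lambda$ means exactly that $\widehat{\mu}$ vanishes on $(\Lambda-\Lambda)\setminus\{\bm 0\}$.

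\textbf{Part (i).} If $E_\Lambda$ is orthonormal, Bessel's inequality applied to the unit vector $e_{-\xi}$ gives $Q_{\mu,\Lambda}(\xi)\le 1$. Conversely, suppose $Q_{\mu,\Lambda}\le1$ everywhere. Take $\xi=-\lambda_0$ for some $\lambda_0\in\Lambda$. The term $\lambda=\lambda_0$ contributes $|\widehat\mu(0)|^2=1$, so every other term $|\widehat{\mu}(\lambda-\lambda_0)|^2$ must vanish. This is exactly orthogonality.

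\textbf{Part (ii).} If $E_\Lambda$ is an orthonormal basis, Parseval's identity applied to $e_{-\xi}$ gives $Q_{\mu,\Lambda}\equiv1$. Conversely, assume $Q_{\mu,\Lambda}\equiv1$. By (i), $E_\Lambda$ is orthonormal, and Parseval's identity now holds for every exponential $e_{\xi}$. Hence each $e_\xi$ lies in the closed span of $E_\Lambda$: its projection onto that span has norm $1=\|e_\xi\|$. It then remains to show that the span of all exponentials is dense in $L^2(\mu)$. Since $\operatorname{supp}\mu$ is compact, trigonometric polynomials are dense in $C(K)$ for a large cube $K$ by Stone–Weierstrass (periodizing), and $C(K)$ is dense in $L^2(\mu)$. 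So $E_\Lambda$ is complete.

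\textbf{Part (iii).} This is the main technical step. Each function $\widehat{\mu}(z)=\int e^{-2\pi i\langle z,x\rangle}d\mu(x)$ extends to an entire function on $\mathbb{C}^n$ because of the compact support. Write $|\widehat\mu(\xi+\lambda)|^2=\widehat\mu(\xi+\lambda)\overline{\widehat\mu(\bar\xi+\lambda)}$. The second factor is $\int e^{2\pi i\langle \xi+\lambda,x\rangle}d\mu$ evaluated analytically in $\xi$, so each term is entire. One must then show the series converges locally uniformly on $\mathbb{C}^n$. For $z=\xi+i\eta$ with $|\eta|\le r$, I would write each term as $\langle e_{-\bar z}\,\cdot\, w,\ e_\lambda\rangle\langle e_\lambda,\ e_{-z}\,\cdot\, w'\rangle$, with weights $w,w'$ of size $e^{2\pi r\,\mathrm{diam}}$ coming from the imaginary part. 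Cauchy–Schwarz together with Bessel's inequality (using orthonormality of $E_\Lambda$) then bounds the sum of absolute values by $\|f_z\|\|g_z\|\le C_r$ uniformly. Since the tails are also controlled, Weierstrass's theorem gives an entire limit.
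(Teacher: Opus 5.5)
The paper gives no proof of this lemma; it is quoted from \cite{JP1}. Your argument is the standard Jorgensen--Pedersen proof and it is correct: Bessel and Parseval applied to the unit vectors $e_{-\xi}$ give (i) and (ii), Stone--Weierstrass supplies the density of exponentials needed for completeness, and Cauchy--Schwarz plus Bessel handle (iii).

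The one step you assert rather than prove is the locally uniform convergence in (iii). A uniform bound on $\sum_{\lambda}|\widehat{\mu}(z+\lambda)\widehat{\mu}(-z-\lambda)|$ does not by itself make the tails small uniformly in $z$. Either of two one-line arguments closes this:
\begin{enumerate}
\item[(a)] Enumerate $\Lambda=\{\lambda_j\}$. Your bound shows that the entire partial sums $S_N(z)=\sum_{j\le N}\widehat{\mu}(z+\lambda_j)\widehat{\mu}(-z-\lambda_j)$ are uniformly bounded on each set $\{|\operatorname{Im} z|\le r\}$. They also converge pointwise, since the series converges absolutely at every $z$. By Montel's theorem every subsequence has a locally uniformly convergent sub-subsequence, and its limit must be the pointwise limit. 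Hence $S_N$ converges locally uniformly and the limit is entire.
\item[(b)] The map $z\mapsto f_z$ is continuous into $L^2(\mu)$. So $\{f_z\}$ is compact in $L^2(\mu)$ as $z$ ranges over a compact set, and on a compact set of vectors the Bessel tails $\sum_{j>N}|\langle f_z,e_{\lambda_j}\rangle|^2$ tend to $0$ uniformly.
\end{enumerate}

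Two minor points. The exponential weights are bounded by $e^{2\pi r\sup_{x\in\operatorname{supp}\mu}|x|}$ rather than by a constant involving the diameter of the support. In (ii) you can avoid periodizing altogether: apply Stone--Weierstrass directly on $\operatorname{supp}\mu$ to the algebra spanned by $\{e_\xi:\xi\in\mathbb{R}^n\}$, which contains the constants, is closed under conjugation, and separates points.
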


Next, we will introduce the definition of a compatible pair.

\begin{defn}\cite{JP1}\label{de2.3}
 Let $Q\in M_{n}(\mathbb{Z})$  be an $n\times n$ expansive matrix with integer entries. Let $D,L \subset \mathbb{Z}^{n}$ be a finite set of integer vectors with $\#D = \#L=N$ and $\bm{0} \in D\cap L$. We say that $(Q^{-1}D, L)$ forms a compatible pair (or $(Q, D)$ is admissible, or $(Q,D,L)$ forms a Hadamard triple)
 if the matrix
 \begin{align*}
 H:=\frac{1}{\sqrt {N}}\left[e^{2\pi i  \langle Q^{-1}d, l  \rangle}\right]_{d\in D, l\in L}
 \end{align*}
 is unitary, i.e., $HH^*=I$, where $H^*$ denotes the transposed conjugate of $H$.
\end{defn}

It is very convenient to construct the orthogonal exponential function family of $ L^{2}(\mu) $ using Hadamard triple. However, the challenge lies in verifying that the constructed $ \Lambda $ forms an orthonormal basis for $ L^{2}(\mu) $. Nowadays, more and more results indicate the mysterious relationship between Hadamard triples and spectral measures \cite{LW1,S,S1}.

\begin{lemma}\label{lem2.3}
Let $Q\in M_{n}(\mathbb{Z})$ be an expanding matrix, and let $D,S\subset\mathbb{Z}^{n}$ be two finite digit sets with $\#D=\#S.$ Then
\begin{enumerate}
	\item[\rm (i)] $(Q,D,S)$ is a Hadamard triple if and only if $\widehat{\delta}_{Q^{-1}D}(s_{1}-s_{2})=0$ for any $s_{1}\neq s_{2}\in S$;
\item[\rm (ii)] Suppose that all $ (Q_{k}^{-1}D_{k},S_{k}) $ are integral compatible pairs for $ k\ge1 $. Then
\[ ((Q_{k}Q_{k-1}\dots Q_{1})^{-1}\widetilde{D}_{k},\widetilde{S}_{k})  \]
is an integral compatible pair for each $ k\ge1 $, where    \[ \widetilde{S}_{k}=S_{1}+Q_{1}^{t}S_{2}+\dots+Q_{1}^{t}Q_{2}^{t}\dots Q_{k-1}^{t}S_{k},\quad\widetilde{D}_{k}=D_{k}+Q_{k}D_{k-1}+\dots+Q_{k}Q_{k-1}\dots Q_{2}D_{1 } .  \]
\end{enumerate}
\end{lemma}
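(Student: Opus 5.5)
The statement to prove is Lemma \ref{lem2.3}. Its two parts are standard facts about Hadamard triples, and I will prove both directly from Definition \ref{de2.3}.

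\textbf{Part (i).} Write $H=\frac{1}{\sqrt N}[e^{2\pi i\langle Q^{-1}d,s\rangle}]_{d\in D,s\in S}$. Since $H$ is square with $\#D=\#S=N$, the identity $HH^*=I$ is equivalent to $H^*H=I$. I will therefore compute the columns. The $(s_1,s_2)$ entry of $H^*H$ is
\[
\frac1N\sum_{d\in D}e^{2\pi i\langle Q^{-1}d,\,s_2-s_1\rangle}=\overline{\widehat{\delta}_{Q^{-1}D}(s_2-s_1)}.
\]
Note that $\langle Q^{-1}d,\xi\rangle$ is what appears in $\widehat{\delta}_{Q^{-1}D}(\xi)$, so no transpose issue arises. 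The diagonal entries equal $1$ automatically. Hence $H^*H=I$ holds exactly when $\widehat{\delta}_{Q^{-1}D}(s_1-s_2)=0$ for all $s_1\neq s_2\in S$, which gives both directions of (i).

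\textbf{Part (ii).} I will argue by induction on $k$, using (i) as the criterion. For $k=1$ the claim is the hypothesis. For the inductive step, first note the recursions
\[
\widetilde D_k=D_k+Q_k\widetilde D_{k-1},\qquad \widetilde S_k=\widetilde S_{k-1}+(Q_1\cdots Q_{k-1})^tS_k .
\]
Put $M_k=Q_k\cdots Q_1$. Then
\[
M_k^{-1}\widetilde D_k=M_k^{-1}D_k+M_{k-1}^{-1}\widetilde D_{k-1}.
\]
The cardinality $\#\widetilde D_k=N^k$ follows once orthogonality is established, since a unitary matrix forces distinct exponentials; alternatively one can count using the fact that $\delta_{M_k^{-1}\widetilde D_k}$ is taken with multiplicity. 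Accordingly, I will use the factorization
\[
\widehat{\delta}_{M_k^{-1}\widetilde D_k}(\xi)=\widehat{\delta}_{M_{k-1}^{-1}\widetilde D_{k-1}}(\xi)\,\widehat{\delta}_{M_k^{-1}D_k}(\xi).
\]

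Now take distinct $s=s'+(Q_1\cdots Q_{k-1})^t t$ and $u=u'+(Q_1\cdots Q_{k-1})^t v$ in $\widetilde S_k$, with $s',u'\in\widetilde S_{k-1}$ and $t,v\in S_k$. There are two cases.

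\emph{Case $s'\neq u'$.} The first factor vanishes at $s-u$ because of integrality. Indeed, $\langle M_{k-1}^{-1}\tilde d,(Q_1\cdots Q_{k-1})^t(t-v)\rangle=\langle \tilde d,t-v\rangle\in\mathbb Z$, since $Q_1\cdots Q_{k-1}M_{k-1}^{-1}=I$ when the $Q_j$ commute. In general the correct statement has $M_{k-1}=Q_{k-1}\cdots Q_1$ paired with $Q_1^t\cdots Q_{k-1}^t=M_{k-1}^t$, and this matches the definition of $\widetilde S_k$. Because $\widetilde D_{k-1},S_k\subset\mathbb Z^n$, the first factor is $M_{k-1}^t\mathbb Z^n$-periodic. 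Hence it equals its value at $s'-u'$, which is $0$ by the inductive hypothesis.

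\emph{Case $s'=u'$, $t\neq v$.} Here the second factor is evaluated at $M_{k-1}^t(t-v)$, giving
\[
\langle M_k^{-1}d,M_{k-1}^t(t-v)\rangle=\langle Q_k^{-1}d,t-v\rangle .
\]
So it equals $\widehat{\delta}_{Q_k^{-1}D_k}(t-v)=0$ by the hypothesis on $(Q_k^{-1}D_k,S_k)$.

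In both cases the transform vanishes, so (i) yields the compatible pair. I also get $\#\widetilde S_k=N^k$ from this representation, since distinct representations give orthogonal, hence distinct, points. Integrality of the new pair follows because the new sets again lie in $\mathbb Z^n$.

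\textbf{Main obstacle.} The one delicate point is bookkeeping of the matrix order and transposes: one must check that $M_k^{-t}M_{k-1}^t=Q_k^{-t}$ and $M_{k-1}^{-t}M_{k-1}^t=I$ with $M_k=Q_k\cdots Q_1$ and $M_{k-1}^t=Q_1^t\cdots Q_{k-1}^t$. I will verify this identity explicitly before running the induction.
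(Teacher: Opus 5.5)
The paper states this lemma without proof and treats it as a standard fact about Hadamard triples, so there is no argument of the authors' to compare against. Your proof is the standard one, and it is correct. Part (i) is exactly the column-orthonormality computation for a square matrix. In (ii) you use the right ingredients: the decomposition $M_k^{-1}\widetilde D_k=M_k^{-1}D_k+M_{k-1}^{-1}\widetilde D_{k-1}$, the $M_{k-1}^t\mathbb{Z}^n$-periodicity of $\widehat{\delta}_{M_{k-1}^{-1}\widetilde D_{k-1}}$, and the identity $M_{k-1}M_k^{-1}=Q_k^{-1}$.

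A few small points to tidy up.

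\textbf{Order of the product.} Write the recursion as $\widetilde S_k=\widetilde S_{k-1}+M_{k-1}^tS_k$ with $M_{k-1}^t=Q_1^t\cdots Q_{k-1}^t$. Your $(Q_1\cdots Q_{k-1})^t$ has the wrong order unless the $Q_j$ commute. You caught this yourself, so just state the correct form from the outset.

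\textbf{Directness of the digit sum.} The product formula for $\widehat{\delta}_{M_k^{-1}\widetilde D_k}$ requires the sum $D_k+Q_k\widetilde D_{k-1}$ to be direct. Your remark that this ``follows once orthogonality is established'' reads as circular if $\delta$ denotes the uniform measure on the set. Either of two fixes works.
\begin{itemize}
\item Say explicitly that you work with the convolution $\delta_{M_{k-1}^{-1}\widetilde D_{k-1}}*\delta_{M_k^{-1}D_k}$, i.e.\ with multiplicities. You get orthonormal columns of a square matrix, hence orthonormal rows, which rules out repeated points.
\item More simply, use row-orthogonality of the $k$-th Hadamard matrix. It forces distinct $d,d'\in D_k$ to satisfy $d-d'\notin Q_k\mathbb{Z}^n$, since otherwise the row inner product would equal $1$. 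Then $d+Q_k\tilde d=d'+Q_k\tilde d'$ forces $d=d'$ and hence $\tilde d=\tilde d'$.
\end{itemize}

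\textbf{Cardinalities and the zero digit.} The counts should be $N_1\cdots N_k$ with $N_j=\#D_j=\#S_j$, not $N^k$, since the lemma does not require the $N_j$ to be equal. Also, the paper's definition of compatible pair includes $\bm{0}\in D\cap L$. You should note that $\bm{0}\in\widetilde D_k\cap\widetilde S_k$ is inherited from $\bm{0}\in D_j\cap S_j$.
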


\begin{lemma}\cite{S1}\label{lem2.7}
	Let $\mu$ be defined by \eqref{1.1} and let $\{(Q,D,S_{k})\}_{k=1}^{\infty}$ be a Hadamard triple sequence with $\bm{0}\in S_{k}.$ If $\mathcal{Z}(\widehat{\mu})$ is uniformly disjoint from the sets $Q^{-k}S_{1}+Q^{-(k-1)}S_{2}+\dots+Q^{-1}S_{k}$ for all large $k$, then $\Lambda=S_{1}+QS_{2}+Q^{2}S_{3}+\dots$ is a spectrum of $\mu.$
\end{lemma}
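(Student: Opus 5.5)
We prove Lemma \ref{lem2.7} with the criterion of Lemma \ref{lem2.2}(ii): we show $Q_{\mu,\Lambda}(\xi)=1$ for every $\xi\in\mathbb{R}^{n}$. Since $\mu$ has Fourier transform $\widehat{\delta}_{Q^{-1}D}(\xi)\widehat{\delta}_{Q^{-2}D}(\xi)\cdots$, we use the factorization $\mu=\mu_k*\mu_{(k)}$ with
\[
\mu_k=\delta_{Q^{-1}D}*\cdots*\delta_{Q^{-k}D},\qquad \widehat{\mu}(\xi)=\widehat{\mu}_k(\xi)\,\widehat{\mu}\bigl((Q^{t})^{-k}\xi\bigr).
\]
Here $Q$ acts on $\Lambda$ through its transpose; we read $QS_k$ as $Q^{t}S_k$, which agrees with the paper's usage because $Q$ is diagonal there. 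Set $\Lambda_k=S_1+Q^{t}S_2+\cdots+(Q^{t})^{k-1}S_k$. By Lemma \ref{lem2.3}(ii), $((Q^{k})^{-1}\widetilde D_k,\Lambda_k)$ is a compatible pair, where $\widetilde D_k=D+QD+\cdots+Q^{k-1}D$ has $N^k$ elements. Hence, by orthogonality of the columns of a unitary matrix and the $(Q^{t})^{k}\mathbb{Z}^{n}$-periodicity of $\widehat{\mu}_k$,
\[
\sum_{\lambda\in\Lambda_k}|\widehat{\mu}_k(\xi+\lambda)|^2=1\qquad\text{for all }\xi .
\]
Because $\bm 0\in S_k$, the sets $\Lambda_k$ increase to $\Lambda$. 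In particular $\Lambda$ is orthogonal: the differences of $\Lambda_k$ lie in $\mathcal{Z}(\widehat{\mu}_k)\subset\mathcal{Z}(\widehat{\mu})$. By Lemma \ref{lem2.2}(i) this gives $Q_{\mu,\Lambda}\le1$.

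For the reverse inequality it suffices, by Lemma \ref{lem2.2}(iii) and analyticity, to show $Q_{\mu,\Lambda}(\xi)\ge 1$ for $\xi$ in a small ball, say $\|\xi\|\le 1$. Write
\[
Q_{\mu,\Lambda}(\xi)\ge\sum_{\lambda\in\Lambda_k}|\widehat{\mu}_k(\xi+\lambda)|^2\,\bigl|\widehat{\mu}\bigl((Q^{t})^{-k}(\xi+\lambda)\bigr)\bigr|^2 .
\]
The key step is a uniform positive lower bound for the second factor. For $\lambda\in\Lambda_k$ we have $(Q^{t})^{-k}\lambda\in (Q^{t})^{-k}S_1+\cdots+(Q^{t})^{-1}S_k$. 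These sets are bounded uniformly in $k$, since $Q$ is expansive and each $S_j$ lies in a fixed complete residue system (or is assumed bounded). By hypothesis they are uniformly disjoint from $\mathcal{Z}(\widehat{\mu})$, i.e. at distance at least $\varepsilon>0$ for large $k$. The perturbation $(Q^{t})^{-k}\xi$ tends to $0$ uniformly on $\|\xi\|\le1$. By uniform continuity of $\widehat{\mu}$ on a compact set, a compactness argument gives $c>0$ with $|\widehat{\mu}((Q^{t})^{-k}(\xi+\lambda))|^2\ge c$ for all large $k$ and all such $\lambda,\xi$.

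With this bound we pass to a limit, following the Jorgensen--Pedersen/Strichartz scheme. Let $\Lambda_k^{\mathrm{far}}\subset\Lambda_k$ be the set where $|\widehat{\mu}(\xi+\lambda)|^2$ is small. Define
\[
1-Q_{\mu,\Lambda}(\xi)\le\sum_{\lambda\in\Lambda_k}|\widehat{\mu}_k(\xi+\lambda)|^2-\sum_{\lambda\in\Lambda_k}|\widehat{\mu}(\xi+\lambda)|^2=\sum_{\lambda\in\Lambda_k}|\widehat{\mu}_k(\xi+\lambda)|^2\bigl(1-|\widehat{\mu}((Q^{t})^{-k}(\xi+\lambda))|^2\bigr).
\]
Each term is bounded by $c^{-1}(1-c)\,|\widehat{\mu}(\xi+\lambda)|^2$, so the sum is $\le C\sum_{\lambda\in\Lambda\setminus\Lambda_{k'}}$-type tails after regrouping. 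More cleanly, split $\Lambda_k=\Lambda_{m}\cup(\Lambda_k\setminus\Lambda_m)$ for fixed $m<k$. On $\Lambda_m$ the factor $1-|\widehat{\mu}((Q^{t})^{-k}(\xi+\lambda))|^2$ tends to $0$ as $k\to\infty$, since $\widehat{\mu}(0)=1$. On $\Lambda_k\setminus\Lambda_m$ we use $|\widehat{\mu}_k(\xi+\lambda)|^2\le c^{-1}|\widehat{\mu}(\xi+\lambda)|^2$, so this part is at most $c^{-1}\sum_{\lambda\in\Lambda\setminus\Lambda_m}|\widehat{\mu}(\xi+\lambda)|^2$. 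That tail tends to $0$ as $m\to\infty$ because $Q_{\mu,\Lambda}(\xi)\le1$ converges. Letting $k\to\infty$ and then $m\to\infty$ gives $Q_{\mu,\Lambda}(\xi)\ge1$, hence equality, and Lemma \ref{lem2.2}(ii) finishes the proof.

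The main obstacle is the uniform lower bound $c$. It needs both the uniform boundedness of the sets $(Q^{t})^{-k}S_1+\cdots+(Q^{t})^{-1}S_k$ and the interpretation of "uniformly disjoint" as a positive distance together with a compactness argument. I would state the boundedness explicitly, deriving it from $\|(Q^{t})^{-j}\|\le C r^{j}$ for some $r<1$ and $\sup_j\max_{s\in S_j}\|s\|<\infty$, which holds in all applications where $S_j\subset\Gamma_q$.
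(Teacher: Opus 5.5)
Your proof is correct. The paper gives no proof of this lemma and only cites Strichartz, and your argument is essentially his: orthogonality from the compatible pairs $(Q^{-k}\widetilde{D}_k,\Lambda_k)$, the uniform bound $|\widehat{\mu}(\xi+\lambda)|^2\ge c\,|\widehat{\mu}_k(\xi+\lambda)|^2$ for $\lambda\in\Lambda_k$ (from uniform disjointness plus compactness), and a dominated-convergence passage, via your split into $\Lambda_m$ and $\Lambda_k\setminus\Lambda_m$, from $\sum_{\lambda\in\Lambda_k}|\widehat{\mu}_k(\xi+\lambda)|^2=1$ to $Q_{\mu,\Lambda}(\xi)=1$. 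You are also right to make uniform boundedness of the $S_k$ an explicit hypothesis: without it the compactness argument giving $c>0$ breaks down, the paper's formulation leaves it implicit, and it does hold in the paper's applications (Lemmas \ref{lem5.1} and \ref{lem5.3}), where the $S_k$ range over finitely many fixed sets such as $\pm p^{-1}QS$.
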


From now until before Section 5, we assume \( Q =p\diag[q,q,\dots,q]\) with prime $p>2$ and integer $q\geq1$. The following concept is useful in considering spectral eigenmatrix problems.

\begin{defn}
	Let $R \in M_{n}(\mathbb{Z})$ be an expanding matrix and let $D$ be
	a digit set in $\mathbb{Z}^{n}$. The finite set $S = \{x_{0},\dots,x_{s-1}\}$ is called a cycle with
	length $s$ in $T(R, D)$ if there exists $\{b_{0},\dots,b_{s-1}\}\subset D$ such that
\begin{align*}
	x_{1}=R^{-1}(x_{0}+b_{0}),\dots,\;x_{s-1}=R^{-1}(x_{s-2}+b_{s-2}),\;x_{0}=R^{-1}(x_{s-1}+b_{s-1}):=x_{s}.
\end{align*}
In addition, if $|\widehat{\delta}_{D}(x_{i})|^{2}=1$ for all $i$, then $S$ is called a $D-cycle$.
\end{defn}
\begin{lemma}\cite{DJ}\label{lem2.4}
	Suppose that $(Q, D, L)$ forms a Hadamard triple with $\bm{0} \in D \cap L$. Then $\Lambda(Q, L)$ is a spectrum of $\mu$ if and only if $S = \{\bm{0}\}$ is the unique $D$-cycle in $T(Q, D)$, where
	\begin{equation*}
		\Lambda(Q, L) = \left\{\sum_{j=1}^{k} Q^{j-1}L : k \in \mathbb{N}\right\}.
	\end{equation*}
\end{lemma}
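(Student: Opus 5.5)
Since the statement is quoted from \cite{DJ}, I only outline how I would reprove it. I read the cycle condition in the standard Dutkay--Jorgensen form. The relevant maps are $\tau_l(\xi)=(Q^{t})^{-1}(\xi+l)$, $l\in L$, and a cycle $\{x_0,\dots,x_{s-1}\}$ of these maps is a $D$-cycle when $|m_D(x_i)|=1$ for all $i$. Here $m_D(\xi)=\widehat{\delta}_D(\xi)$. With this reading the ``$T(Q,D)$'' in the statement should be understood as the attractor $X_L$ of $\{\tau_l\}_{l\in L}$, which seems to be the intended meaning. The approach is the Jorgensen--Pedersen/{\L}aba--Wang argument based on the function $Q_{\mu,\Lambda}$ and Lemma \ref{lem2.2}.

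\emph{Orthogonality.} Iterating the Hadamard property with Lemma \ref{lem2.3}(ii) shows that each $\Lambda_k=L+Q^tL+\dots+(Q^t)^{k-1}L$ is a spectrum for $\delta_{Q^{-1}D}*\dots*\delta_{Q^{-k}D}$. Since $\bm 0\in L$, the sets $\Lambda_k$ increase, so $\Lambda=\bigcup_k\Lambda_k$ is orthogonal for $\mu$. Hence $h(\xi):=Q_{\mu,\Lambda}(\xi)\le 1$, and by Lemma \ref{lem2.2}(iii) $h$ extends to an entire function.

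\emph{Refinement identity.} Using $\widehat\mu(\xi)=m_D((Q^t)^{-1}\xi)\,\widehat\mu((Q^t)^{-1}\xi)$ and the decomposition $\Lambda=L+Q^t\Lambda$, I will derive
\[
h(\xi)=\sum_{l\in L}|m_D(\tau_l\xi)|^2\,h(\tau_l\xi).
\]
By the Hadamard property, $\sum_{l\in L}|m_D(\tau_l\xi)|^2=1$. Therefore $g:=1-h\ge 0$ is harmonic for the transfer operator $Rf(\xi)=\sum_l|m_D(\tau_l\xi)|^2f(\tau_l\xi)$, and $g(\bm 0)=0$ because $\widehat\mu(\lambda)=0$ for $\lambda\in\Lambda\setminus\{\bm 0\}$.

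\emph{Sufficiency.} Assume $\{\bm 0\}$ is the only $D$-cycle. Let $c$ be the minimum of $g$ on the compact attractor $X_L$, attained at some $x$. Harmonicity forces $g(\tau_lx)=c$ whenever $m_D(\tau_lx)\neq0$, so the set where the minimum is attained is invariant along admissible branches. By compactness and a finiteness argument (the minimum set meets the finite set of cycle points, as in \cite{DJ}), the minimum is attained on a $D$-cycle, hence at $\bm 0$, and so $c=0$. I then propagate from $X_L$ to all of $\mathbb R^n$. For any $\xi$, the iterates $\tau_{l_k}\cdots\tau_{l_1}\xi$ converge to $X_L$, and iterating the harmonic identity gives
\[
g(\xi)=\sum|m|^2\cdots g(\text{iterate}).
\]
Combined with $g=0$ on the part of $X_L$ reached, this gives $g\equiv0$. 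Alternatively, one can use analyticity: $g$ vanishes on an open neighbourhood argument and hence everywhere. This is the main obstacle: showing rigorously that $g$ vanishing at $\bm 0$ together with the cycle hypothesis forces $g\equiv0$. Here I would follow Dutkay--Jorgensen's proof that bounded harmonic functions for $R$ that are continuous at the cycle points are determined by their values on the $D$-cycles.

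\emph{Necessity.} Suppose there is a $D$-cycle $\{x_0,\dots,x_{s-1}\}\neq\{\bm 0\}$. Since $|m_D(x_i)|=1$, we get $|\widehat\mu(-x_0+\cdot)|$ structure: $|\widehat\mu(x_0+(Q^t)^s\eta)|=|\widehat\mu(x_0)|\cdot|\widehat\mu(\eta)|$ along the cycle. I will show that $e_{-x_0}$ (or the appropriate translate) is orthogonal to all of $E_\Lambda$: each $\lambda\in\Lambda$ is carried by the dynamics into the orbit of $\bm 0$, while $x_0$ stays on the nonzero cycle, and this forces the relevant Fourier coefficient to vanish. Hence $h(x_0)\ne1$ and $\Lambda$ is incomplete by Lemma \ref{lem2.2}(ii).
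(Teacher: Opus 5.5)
The paper does not prove this lemma; it is quoted from \cite{DJ}, so your outline has to stand on its own. Several of your steps are sound:
\begin{itemize}
\item Your reading of the cycle condition is the intended one: cycles of $\tau_l(\xi)=(Q^t)^{-1}(\xi+l)$ in $X_L=T(Q^t,L)$ on which $|\widehat{\delta}_D|=1$. This matches how the lemma is used in Lemma \ref{lem4.7}.
\item The orthogonality step is fine.
\item The identity $h(\xi)=\sum_l|m_D(\tau_l\xi)|^2h(\tau_l\xi)$ and the normalization $\sum_l|m_D(\tau_l\xi)|^2=1$ are fine.
\end{itemize}

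\textbf{The sufficiency argument has a genuine gap, in two places.}

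First, you apply the extremum principle to the \emph{minimum} of $g=1-h$. But $g\ge 0$, $g(\bm{0})=0$ and $\bm{0}\in X_L$, so $\min_{X_L}g=0$ is automatic. Invariance of the minimizing set also gives nothing: from $\bm{0}$ the only branch of positive weight is $\tau_{\bm{0}}$, since $m_D((Q^t)^{-1}l)=0$ for $l\neq\bm{0}$. Your propagation step then silently uses $g=0$ on all of $X_L$, which was never shown.

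The fix is to apply the principle to $\max_{X_L}g$, equivalently $\min_{X_L}h$.
\begin{itemize}
\item The maximizing set $M$ is closed and satisfies: $x\in M$ and $m_D(\tau_lx)\neq0$ imply $\tau_lx\in M$.
\item If $M$ contains a $D$-cycle point, that point is $\bm{0}$, so $\max_{X_L}g=0$.
\item Only then does iterating $g(\xi)=\sum_{|w|=k}W_w(\xi)g(\tau_w\xi)$, with uniform continuity near $X_L$, give $g\equiv0$. Here $W_w$ is the product of the weights along the word $w$.
\end{itemize}

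Second, the claim that such an $M$ contains a $D$-cycle is the whole content of the theorem, and ``compactness and a finiteness argument'' does not supply it. Finiteness of the set of cycle points is beside the point. One must first produce a periodic orbit inside $M$, and then show that every weight on it equals $1$. Your alternative appeal to analyticity is not supported by anything.

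Here is one way to do it when $\mathcal{Z}(m_D)$ is discrete, which holds in every application in this paper by \eqref{1.3}.
\begin{enumerate}
\item Pass to a minimal closed invariant $M'\subset M$. In $M'$, every positive-weight forward orbit is dense.
\item The points of $X_L$ whose branch tree ever meets a zero of $m_D$ form a countable set.
\item If $M'$ is uncountable, some $y\in M'$ has every $\tau_wy\in M'$. This forces $M'=X_L$, which contains $\bm{0}$.
\item If $M'$ is countable, it has an isolated point $y$ by Baire. Density of the orbit of $y$ then gives $\tau_wy=y$ for some positive-weight word $w$.
\item Suppose some weight on this orbit were $<1$. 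Then a second positive-weight branch also returns to $y$. This produces a word $w'\neq w$, differing from $w$ at a position $\le\min(|w|,|w'|)$.
\item Both $\tau_w\circ\tau_{w'}$ and $\tau_{w'}\circ\tau_w$ fix $y$ and have the same linear part, so they are the same affine map. This contradicts uniqueness of finite $L$-digit expansions, since the elements of $L$ are distinct mod $Q^t\mathbb{Z}^n$.
\end{enumerate}

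\textbf{The necessity direction is right in spirit but should be made concrete.}
\begin{itemize}
\item Write the cycle as $x_{j+1}=\tau_{l_j}x_j$ (indices mod $s$), and $\lambda=\sum_{j\ge0}(Q^t)^jl'_j\in\Lambda(Q,L)$ with $l'_j=\bm{0}$ for large $j$.
\item The cycle is not $\{\bm{0}\}$, so the periodic sequence $(l_j)$ is not eventually $\bm{0}$. Hence there is a first index $r$ with $l'_r\neq l_r$.
\item Then $(Q^t)^{-(r+1)}(x_0+\lambda)\equiv x_{r+1}+(Q^t)^{-1}(l'_r-l_r)\pmod{\mathbb{Z}^n}$.
\item Since $|m_D(x_{r+1})|=1$ and $\bm{0}\in D$, we have $m_D(x_{r+1}+y)=m_D(y)$ for all $y$.
\item So the $(r+1)$-st factor of $\widehat{\mu}(x_0+\lambda)=\prod_{k\ge1}m_D((Q^t)^{-k}(x_0+\lambda))$ equals $m_D((Q^t)^{-1}(l'_r-l_r))=0$.
\end{itemize}
Hence $h(x_0)=0$ and $-x_0\notin\Lambda(Q,L)$. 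So $\Lambda(Q,L)\cup\{-x_0\}$ is still orthogonal, and $\Lambda(Q,L)$ is not a spectrum.
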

 Similar to \cite[Lemma 5.7]{ADH}, we also have the following lemma.
\begin{lemma}\label{lem2.5}
Let $x \in T(Q, \Gamma_{q})$. Then $x$ can be expressed by
\begin{equation}\label{eq2.1}
	x=\sum_{k=1}^{\infty}Q^{-k}x_{k}=\sum_{k=1}^{\infty}(pq)^{-k}x_{k},
\end{equation}
where $x_{k}\in\Gamma_{q}$. Moreover, if $x\in\mathbb{Q}^{n}$, then the expression of \eqref{eq2.1} is eventually.
\end{lemma}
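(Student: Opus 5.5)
The plan is to prove Lemma \ref{lem2.5} by digit extraction, as in \cite[Lemma 5.7]{ADH}. First, $T(Q,\Gamma_q)=\sum_{k\ge1}Q^{-k}\Gamma_q$ by definition of the attractor. Since $Q=pq I_n$ is scalar, $Q^{-k}x_k=(pq)^{-k}x_k$, and every $x\in T(Q,\Gamma_q)$ has a representation \eqref{eq2.1} with $x_k\in\Gamma_q$. Hence the real content is the second assertion: if $x\in\mathbb{Q}^n$, then the digit sequence $(x_k)$ can be chosen eventually periodic, which is what ``eventually'' means here.

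The digit sequence will be generated by the inverse branches of the IFS. Set $y_0=x$ and $y_m=Q^m x-\sum_{k=1}^m Q^{m-k}x_k=\sum_{k\ge1}Q^{-k}x_{m+k}$. Each $y_m$ lies in $T(Q,\Gamma_q)$, which is a compact set contained in $[-\tfrac12,\tfrac12]^n$, because $\sum_k (pq)^{-k}\cdot pq/2=\tfrac{pq}{2(pq-1)}$, at most about $\tfrac12$ up to the boundary. Moreover $y_{m+1}=Qy_m-x_{m+1}$.

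Next comes the finiteness argument. Write $x=\frac{1}{N}z$ with $z\in\mathbb{Z}^n$ and $N\in\mathbb{N}$. By induction every $y_m\in\frac1N\mathbb{Z}^n$, since $Q$ and $x_{m+1}$ are integral. The set $\frac1N\mathbb{Z}^n\cap T(Q,\Gamma_q)$ is finite because $T(Q,\Gamma_q)$ is bounded. Therefore there exist $m<m'$ with $y_m=y_{m'}$.

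From $y_m=y_{m'}$ one can redefine the tail. Replace $(x_{m+1},x_{m+2},\dots)$ by the periodic repetition of the block $x_{m+1}\dots x_{m'}$. This is legitimate: with $s=m'-m$, unrolling the recursion gives $y_m=\sum_{k=1}^{s}Q^{-k}x_{m+k}+Q^{-s}y_{m'}$. Since $y_{m'}=y_m$, the fixed-point equation $y=\sum_{k=1}^{s}Q^{-k}x_{m+k}+Q^{-s}y$ holds, and its unique solution ($Q^{-s}$ is contractive) equals the periodic expansion. Thus $x=\sum_{k\le m}Q^{-k}x_k+Q^{-m}y_m$, with an eventually periodic digit sequence.

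The only delicate point is the choice of digits when $x$ admits several expansions, for instance boundary points of $[-\tfrac12,\tfrac12)^n$. I will sidestep this by fixing any one expansion and working with its tails $y_m$, since these are automatically in $T$. An alternative is to define $x_{m+1}$ as the unique element of $\Gamma_q$ congruent to $Qy_m$ modulo $Q$, rounding into $[-\tfrac12,\tfrac12)$, but the tail approach avoids that issue entirely.
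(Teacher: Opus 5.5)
The paper does not prove this lemma itself; it only asserts it by analogy with \cite[Lemma 5.7]{ADH}, so there is no argument in the paper to compare against. Your proposal gives a complete and correct proof of the natural reading: a rational point of $T(Q,\Gamma_q)$ admits an eventually periodic expansion.

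\textbf{Why it works.} The tails $y_m$ satisfy $y_{m+1}=Qy_m-x_{m+1}$. They lie in the bounded set $T(Q,\Gamma_q)$ and in $\frac1N\mathbb{Z}^n$, so two of them coincide. Your fixed-point/contraction argument then correctly shows that repeating the block $x_{m+1}\dots x_{m'}$ gives another valid expansion of $x$. This pigeonhole-on-tails argument is the standard one.

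\textbf{Two small slips.}
\begin{enumerate}
\item The claim $T(Q,\Gamma_q)\subset[-\frac12,\frac12]^n$ is false when $pq$ is even. For $p=3$, $q=2$ each coordinate ranges over $[-\frac35,\frac25]$. Only boundedness is used, so nothing breaks.
\item The ``alternative'' of choosing $x_{m+1}\equiv Qy_m \pmod Q$ does not make sense as stated, since $Qy_m$ is generally not integral. You can simply drop it.
\end{enumerate}

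\textbf{Existence versus every expansion.} You prove that \emph{some} expansion is eventually periodic, not that \emph{every} expansion of $x$ is. The proof of Theorem~\ref{thm4.10} speaks of ``its minimal period'' as if the expansion were canonical.

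If you want the stronger version, it holds here because $Q=pqI_n$ and $\Gamma_q$ is a product of sets of $pq$ consecutive integers. In each coordinate the attractor is an interval $[\alpha,\alpha+1]$, and its $pq$ pieces meet only at single points. So a tail can admit two next digits $d,d+1$ only when the resulting tails are exactly the endpoints $\alpha+1$ and $\alpha$. From an endpoint the digit is forced, and the tail stays there.

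Hence, along any expansion, each coordinate's tail sequence evolves deterministically in the finite set $[\alpha,\alpha+1]\cap\frac1N\mathbb{Z}$, except that it may jump once to an endpoint. It is therefore eventually periodic, and so is the vector digit sequence.
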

\begin{pro}\label{pro2.9}
	Let $A\subset\Gamma_{q}$, then the following statements are equivalent:
	\begin{enumerate}
		\item[\rm(i)] $A$ is a maximal orthogonal set of $\mu_{Q,D}$ in $\Gamma_{q}$.
		\item[\rm(ii)] $A$ is a spectrum of $\delta_{Q^{-1}D}$.
		\item[\rm(iii)] For any $a\in A$, $A\equiv a+A_{q}\pmod Q$.
	\end{enumerate}
\end{pro}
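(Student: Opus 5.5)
Throughout, $A\subset\Gamma_q$ and $Q=pq I$. The whole argument rests on a description of the relevant zero set. The zero set of $\widehat{\delta}_{Q^{-1}D}(\xi)=\widehat{\delta}_D(Q^{-1}\xi)$ is
\[
Q\mathcal{Z}(\widehat{\delta}_D)=\bigcup_{j=1}^{p-1}\bigl(jq\bm{a}+Q\mathbb{Z}^n\bigr).
\]
So two integer points $x,y$ satisfy $\widehat{\delta}_{Q^{-1}D}(x-y)=0$ if and only if $x-y\equiv jq\bm{a}\pmod Q$ for some $j\in\{1,\dots,p-1\}$.

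I read ``maximal orthogonal set of $\mu_{Q,D}$ in $\Gamma_q$'' as orthogonality with respect to the first-level factor $\delta_{Q^{-1}D}$. The formula for $\mathcal{Z}(\widehat{\mu}_{Q,D})$ in Section 2 contains $Q^k\mathcal{Z}(\widehat\delta_{Q^{-1}D})$ for every $k\ge0$. However, a difference $x-y$ of two elements of $\Gamma_q$ has every coordinate of absolute value less than $pq$. Hence $x-y$ can only land in the $k=0$ piece, or in a $k\ge1$ piece with a very restricted form. I would check this carefully first. If the $k\ge1$ pieces do contribute (e.g. $x-y\in pq\cdot jq\bm{a}+Q^2\mathbb{Z}^n$ is impossible for nonzero small vectors since such vectors are nonzero multiples of $pq$ in some coordinate times stuff), I would note that $Q^k(jq\bm a+Q\mathbb Z^n)\subset Q\mathbb Z^n$ and $\Gamma_q$ is a complete residue system mod $Q$. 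So $x-y\in Q\mathbb{Z}^n$ forces $x=y$, and orthogonality within $\Gamma_q$ reduces exactly to the first-level condition.

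\textbf{(iii)$\Rightarrow$(ii).} If $A\equiv a+A_q\pmod Q$, then $A$ has exactly $p$ elements, one in each class $a+iq\bm{a}$. This uses that $q\bm a, 2q\bm a,\dots$ are distinct mod $Q$, which holds because some coordinate of $\bm a$ is a unit mod $p$ (all $i_k\in[1,p-1]$). Pairwise differences are $\equiv (i-i')q\bm{a}\pmod Q$ with $i\neq i'$, so they lie in the zero set. Thus $A$ is an orthogonal set of size $p=\#D$ for the $p$-point measure $\delta_{Q^{-1}D}$, and hence a spectrum, since $L^2(\delta_{Q^{-1}D})$ has dimension $p$.

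\textbf{(ii)$\Rightarrow$(i).} A spectrum of $\delta_{Q^{-1}D}$ is orthogonal and has $p$ elements. Any orthogonal set has at most $p$ elements, by the dimension count. So $A$ is maximal in $\Gamma_q$ by cardinality.

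\textbf{(i)$\Rightarrow$(iii).} This is the main step. Fix $a\in A$. Every $a'\in A\setminus\{a\}$ satisfies $a'\equiv a+jq\bm{a}\pmod Q$ for some $j\ne0$, and distinct elements give distinct classes, since $\Gamma_q$ is a complete residue system. Hence $A$ injects into $a+A_q$ mod $Q$.

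For maximality, suppose some class $a+iq\bm{a}$ is missed. Take its representative $z\in\Gamma_q$. Then for any $a'\equiv a+jq\bm a\in A$,
\[
z-a'\equiv (i-j)q\bm{a}\pmod Q,
\]
with $i-j\not\equiv0\pmod p$. So $z$ is orthogonal to all of $A$, contradicting maximality. Therefore $A\equiv a+A_q$.

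The obstacle I expect is the rigorous handling of the higher-level zeros $Q^k\mathcal Z(\widehat\delta_{Q^{-1}D})$ for $k\ge1$ in the reduction above. This is needed for the direction (i)$\Rightarrow$(iii), where maximality must be tested against the first-level condition. If the intended meaning of orthogonality is with respect to $\mu_{Q,D}$, I would use the bound on coordinates of differences in $\Gamma_q$ to rule these zeros out. Specifically, such a zero has the form $Q^k(jq\bm a+Q m)$, and its coordinates are either $0$ or of size at least $pq$. This needs a short case check, using that each coordinate of $jq\bm a$ is nonzero mod $pq$.
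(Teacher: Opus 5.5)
Your proof is correct and uses the same ingredients as the paper. Both reduce $\mu_{Q,D}$-orthogonality inside $\Gamma_q$ to the first-level zero set $\bigcup_{j=1}^{p-1}(jq\bm a+Q\mathbb Z^n)$, use the dimension bound $\#A\le p$, and add a representative of a missing residue class to contradict maximality; you only arrange these as the cycle (iii)$\Rightarrow$(ii)$\Rightarrow$(i)$\Rightarrow$(iii) rather than (i)$\Leftrightarrow$(ii), (ii)$\Leftrightarrow$(iii). Your hedging about the levels $k\ge1$ is unnecessary: $Q^k\mathcal Z(\widehat\delta_{Q^{-1}D})\subset Q\mathbb Z^n$ for $k\ge1$, together with $\Gamma_q$ being a complete residue system mod $Q$, already settles it, and this spells out the inclusion $(-pq,pq)^n\cap\mathcal Z(\widehat\mu_{Q,D})\subset\mathcal Z(\widehat\delta_{Q^{-1}D})$ that the paper only asserts.
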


\begin{proof}
	$\rm(i)\Leftrightarrow \rm(ii)$ We first prove that $A\subset\Gamma_{q}$ is an orthogonal set of $\mu_{Q,D}$ if and only if $A$ is an orthogonal set of $\delta_{Q^{-1}D}$. In fact $\Gamma_{q}\subset [-\frac{pq}{2},\frac{pq}{2})^{n}$, we know that
	\begin{equation*}
		(A-A)\backslash\{\bm{0}\}\subset (-pq,pq)^{n}\cap\mathcal{Z}(\widehat{\mu}_{Q,D})\subset\mathcal{Z}(\widehat{\delta}_{Q^{-1}D}).
	\end{equation*}
	Therefore, the necessity has been established.  The sufficiency follows from $\mathcal{Z}(\widehat{\delta}_{Q^{-1}D})\subset\mathcal{Z}(\widehat{\mu}_{Q,D}) .$
	
	Suppose $A$ is a maximal orthogonal set of $\mu_{Q,D}$ in $\Gamma_{q}$, then it is also an orthogonal set of $\delta_{Q^{-1}D}$. If $\#A < p$, without loss of generality, we suppose $\#A = p-1$ and let $A = \{a_1, a_2, \dots, a_{p-1}\}$ such that
	\begin{equation*}
		a_i - a_1 \equiv iq\bm{a} \pmod{Q} \in \mathcal{Z}(\widehat{\delta}_{Q^{-1}D}), \quad i = 2, 3, \dots, p-1.
	\end{equation*}
Denote $a \equiv a_1 + q\bm{a} \pmod{Q}$ with $a \in \Gamma_q$. Then $a - a_1 \equiv q\bm{a} \pmod{Q}$, and
\begin{equation*}
	a - a_i = (a - a_1) - (a_i - a_1) \equiv (1 - i)q\bm{a} \pmod{Q}.
\end{equation*}
Thus, for any $j \in \{1, \dots, p-1\}$, $a - a_j \in \mathcal{Z}(\widehat{\delta}_{Q^{-1}D})$, which implies that $A \cup \{a\}$ is also an orthogonal set with respect to $\mu_{Q,D}$. This contradicts the maximality of $A$. Hence, we have $\#A = p$, which is the dimension of $L^2(\delta_{Q^{-1}D})$. In summary, $A$ is the spectrum of $\delta_{Q^{-1}D}$.

	On the other hand, if $A$ is a spectrum of $\delta_{Q^{-1}D}$, then for any $a \in \Gamma_{q}\backslash A,\; A\cup\{a\}$ is not an orthogonal set of $\mu_{Q,D}$. Therefore, $A$ is a maximal orthogonal set of $\mu_{Q,D}$ in $\Gamma_{q}$.

$\rm(ii)\Rightarrow \rm(iii)$ Let $A$ be a spectrum of $\delta_{Q^{-1}D}$. We then know $\#A = p$ and take $A = \{a_0, a_1, a_2, \dots, a_{p-1}\}$ for convenience. For any $a \in A$, we take $a = a_0$ for convenience and can rewrite $A = \{a_0, b_1+a_0, b_2+a_0, \dots, b_{p-1}+a_0\}$, where $b_i = a_i - a_0$ for $i = 1, 2, \dots, p-1$. By orthogonality, we conclude that the sets satisfy $\{b_1, b_2, \dots, b_{p-1}\} \equiv \{q\bm{a}, 2q\bm{a}, \dots, (p-1)q\bm{a}\} \pmod{Q}$. It follows that $A \equiv a_0 + A_q \pmod{Q}$.

$\rm(iii)\Rightarrow \rm(ii)$
For any $a\in A$, $A\equiv a+A_{q}\pmod{Q}$. This implies that $\#A=p$ and
\[
A-A \equiv A_{q}-A_{q} \pmod{Q}.
\]
Since $A_q$ is a spectrum of $\delta_{Q^{-1}D}$, it readily follows that $A \equiv a + A_q \pmod{Q}$ is also a spectrum of $\delta_{Q^{-1}D}$.
\end{proof}

\begin{pro}\label{pro2.10}
Let $A$ be a maximal orthogonal set of $\mu_{Q,D}$ in $\Gamma_{q}$.
Then there exists a unique
$a_{1}:=(a_{1,1},\dots,a_{n,1})^{t}\in A$ satisfying
$-\frac{q}{2}\le a_{1,1}<\frac{q}{2}$.
Moreover, the map from $A$ to $a_{1}$ is one-to-one.
\end{pro}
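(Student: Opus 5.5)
The plan is to read off the first coordinates of the elements of $A$ from the explicit description in Proposition \ref{pro2.9}. By (iii) of Proposition \ref{pro2.9}, if we fix any $a\in A$, then
\[
A\equiv a+A_q=\{a+jq\bm{a}: j=0,1,\dots,p-1\}\pmod Q .
\]
Since $\Gamma_q$ is a complete residue system modulo $Q$, each element of $A$ is the unique representative in $\Gamma_q$ of $a+jq\bm{a}$ for exactly one $j\in\Sigma_p$. Because $Q=pq I$, reduction modulo $Q$ acts coordinatewise modulo $pq$. So we only need to look at the first coordinate.

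Write $\bm{a}=(\alpha_1,\dots,\alpha_n)^t$ with $\alpha_1\in[1,p-1]$, and let $x$ be the first coordinate of $a$. The first coordinate of the element of $A$ indexed by $j$ is the representative in $[-\frac{pq}{2},\frac{pq}{2})$ of $x+jq\alpha_1$ modulo $pq$. Since $p$ is prime and $1\le\alpha_1\le p-1$, $\alpha_1$ is invertible modulo $p$. Hence $\{j\alpha_1 \bmod p: j\in\Sigma_p\}=\Sigma_p$, and so $\{jq\alpha_1 \bmod pq\}=q\Sigma_p$. Consequently the $p$ first coordinates of the elements of $A$ represent exactly the $p$ residues modulo $pq$ of the coset $x+q\mathbb{Z}$. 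Equivalently, they are exactly the $p$ points of $(x+q\mathbb{Z})\cap[-\frac{pq}{2},\frac{pq}{2})$, since an interval of length $pq$ contains precisely one representative of each class modulo $pq$.

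The interval $[-\frac q2,\frac q2)$ is a half-open interval of length $q$ contained in $[-\frac{pq}{2},\frac{pq}{2})$, so it contains exactly one point of $x+q\mathbb{Z}$. That point is the first coordinate of exactly one element of $A$, because distinct $j$ give distinct residues modulo $pq$. This element is the required $a_1$, which proves existence and uniqueness. Note also that $a_1\in\Gamma_q$ with first coordinate in $[-\frac q2,\frac q2)$ means exactly $a_1\in B_q$.

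For the one-to-one claim, apply Proposition \ref{pro2.9}(iii) with $a=a_1$ to get $A\equiv a_1+A_q\pmod Q$. Since $\Gamma_q$ is a complete residue system, $A$ is then uniquely determined by $a_1$. So two maximal orthogonal sets with the same $a_1$ coincide, and the correspondence $A\mapsto a_1$ is injective (indeed a bijection onto $B_q$, since every $\bm{b}\in B_q$ produces $\mathcal{A}_{\bm b}$). The only delicate step is the observation that $jq\alpha_1$ runs over all multiples of $q$ modulo $pq$, which uses primality of $p$ and $p\nmid\alpha_1$; everything else is bookkeeping with residues.
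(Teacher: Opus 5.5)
Your proof is correct and is essentially the paper's argument. Both use Proposition \ref{pro2.9} to see that the first coordinates of the $p$ elements of $A$ differ pairwise by nonzero multiples of $q$ modulo $pq$. The paper phrases this as circular distance at least $q$ and then places one point in each of $p$ intervals of length $q$; your explicit identification of the first coordinates with $(x+q\mathbb{Z})\cap[-\frac{pq}{2},\frac{pq}{2})$ makes precise the pigeonhole step the paper leaves implicit. Both also obtain injectivity from the fact that $A\equiv a+A_q\pmod Q$ determines $A\subset\Gamma_q$ from any one of its elements. The paper states this last part as disjointness of distinct maximal orthogonal sets in $\Gamma_q$ (later used in Proposition \ref{pro2.11}), which your argument also gives if you apply it to an arbitrary common element instead of $a_1$.
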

\begin{proof}
	Let $\Psi_q = [-\frac{pq}{2}, \frac{pq}{2})$ be a circle group, and define $d(x,y) = |x-y| \pmod{pq}$ for any $x, y \in \Psi_q$. By Proposition \ref{pro2.9}, $A$ is a spectrum of $\delta_{Q^{-1}D}$. Hence, $\#A = p$, and we write $A = \{a_1, a_2, \dots, a_p\}$, where $a_i = (a_{1,i}, a_{2,i}, \dots, a_{n,i})^t$ for $i \in \{1, 2, \dots, p\}$. The orthogonal property of $A$ implies that
	\begin{equation*}
		(A-A)\setminus\{\bm{0}\}\subseteq A_q \pmod{Q}.
	\end{equation*}
Then, for any two distinct elements $a_{1,j}$ and $a_{1,k}$ in $\{a_{1,1},a_{1,2},\dots,a_{1,p}\}$, we have $d(a_{1,j}, a_{1,k}) \geq q$. Furthermore, each of $a_{1,1},a_{1,2},\dots,a_{1,p}$ lies in a distinct interval:
\begin{equation*}
	\underbrace{
		\left[-\frac{pq}{2}, -\frac{(p-2)q}{2}\right),\ \left[-\frac{(p-2)q}{2}, -\frac{(p-4)q}{2}\right),\dots,\ \left[-\frac{q}{2}, \frac{q}{2}\right),\dots, \left[\frac{(p-2)q}{2}, \frac{pq}{2}\right)
	}_{p}
\end{equation*}
The first statement follows.
	
	Let $A \neq A'$ be two maximal orthogonal sets of $\mu_{Q,D}$ in $\Gamma_q$. For the second conclusion, we only need to prove that $A$ and $A'$ are disjoint. For contradiction, suppose there exists an element $a$ such that $a \in A \cap A'$. By Proposition \ref{pro2.9}, we have
	\begin{equation*}
		A \equiv A' \equiv a + A_q \pmod{Q}.
	\end{equation*}
	Since $A, A' \subset \Gamma_q$, this implies $A = A'$, a contradiction.
\end{proof}
 Based on the above two lemmas, the following decomposition of $\Gamma_q$ holds.
\begin{pro}\label{pro2.11}
Let $B_q = \{(i_1, i_2, \dots, i_n)^t \in \Gamma_q : -\tfrac{q}{2} \le i_1 < \tfrac{q}{2}\}$ and $A_q = q\{\bm{0}, \bm{a}, \dots, (p-1)\bm{a}\}$. We decompose $\Gamma_q$ into a disjoint union:
\begin{equation*}
	\Gamma_q = \bigcup_{\bm{b} \in B_q} \mathcal{A}_{\bm{b}},
\end{equation*}
where $\mathcal{A}_{\bm{b}} \equiv \bm{b} + A_q \pmod{Q}$ is a maximal orthogonal set for $\mu_{Q,D}$ in $\Gamma_q$.
\end{pro}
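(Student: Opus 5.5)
The plan is to combine Propositions \ref{pro2.9} and \ref{pro2.10}. Three things need to be shown: each $\mathcal{A}_{\bm{b}}$ is a well-defined maximal orthogonal set in $\Gamma_q$; distinct $\bm{b}$ give disjoint sets; and the sets indexed by $B_q$ cover $\Gamma_q$.

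First, fix $\bm{b}\in B_q$. Since $\Gamma_q$ is a complete residue system modulo $Q$, the residues of $\bm{b}+A_q$ have unique representatives in $\Gamma_q$. Define $\mathcal{A}_{\bm{b}}\subset\Gamma_q$ to be this set of representatives. The elements $jq\bm{a}$, $0\le j\le p-1$, are pairwise incongruent modulo $Q=pq I$. Indeed, $jq\bm{a}\in pq\mathbb{Z}^n$ would force $j\bm{a}\in p\mathbb{Z}^n$, which is impossible for $1\le j\le p-1$ because $p$ is prime and $1\le a_1\le p-1$. Hence $\#\mathcal{A}_{\bm{b}}=p$. For any $c\equiv \bm{b}+iq\bm{a}$ in $\mathcal{A}_{\bm{b}}$, we have $\mathcal{A}_{\bm{b}}\equiv c+A_q \pmod Q$, because $A_q-iq\bm{a}\equiv A_q \pmod Q$. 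This holds since $\{j-i \bmod p\}=\Sigma_p$ and $pq\bm{a}\in Q\mathbb{Z}^n$. By Proposition \ref{pro2.9}(iii)$\Rightarrow$(i), $\mathcal{A}_{\bm{b}}$ is a maximal orthogonal set of $\mu_{Q,D}$ in $\Gamma_q$. Also $\bm{b}\in\Gamma_q$ and $\bm{b}\equiv\bm{b}+0$, so $\bm{b}\in\mathcal{A}_{\bm{b}}$. Because $-q/2\le b_1<q/2$, the uniqueness in Proposition \ref{pro2.10} shows that $\bm{b}$ is the distinguished element $a_1$ of $\mathcal{A}_{\bm{b}}$.

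Second, for disjointness, suppose $c\in\mathcal{A}_{\bm{b}}\cap\mathcal{A}_{\bm{b}'}$. Proposition \ref{pro2.9}(iii) gives $\mathcal{A}_{\bm{b}}\equiv c+A_q\equiv\mathcal{A}_{\bm{b}'}\pmod Q$, and both sets lie in $\Gamma_q$, so $\mathcal{A}_{\bm{b}}=\mathcal{A}_{\bm{b}'}$. Their distinguished first-coordinate elements then coincide, so $\bm{b}=\bm{b}'$.

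Third, for covering, take any $c\in\Gamma_q$ and let $A\subset\Gamma_q$ be the set of representatives of $c+A_q$. By the argument of the first step, $A$ is a maximal orthogonal set in $\Gamma_q$. Proposition \ref{pro2.10} then provides a unique $\bm{b}\in A$ with first coordinate in $[-q/2,q/2)$, that is, $\bm{b}\in B_q$. Proposition \ref{pro2.9}(iii) gives $A\equiv\bm{b}+A_q$, so $A=\mathcal{A}_{\bm{b}}\ni c$. As a sanity check, $\#B_q\cdot p=q(pq)^{n-1}p=(pq)^n=\#\Gamma_q$.

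The only delicate point is making sure Proposition \ref{pro2.10} truly applies. It rests on the first coordinates of $A_q$ being $jqa_1$ with $p\nmid a_1$, which makes them spaced by multiples of $q$ modulo $pq$ and so places exactly one in each window of length $q$. This uses the hypothesis that every entry of $\bm{a}$ lies in $[1,p-1]$.
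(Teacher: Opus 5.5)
Your proof is correct and follows the paper's route. Both arguments use Proposition~\ref{pro2.9} to see that each residue set $\bm{b}+A_q$ (and each $\lambda+A_q$), reduced into $\Gamma_q$, is a maximal orthogonal set, and then use Proposition~\ref{pro2.10} to pick out the unique element in $B_q$ for the covering. Your disjointness step goes slightly further than the paper: you use the distinguished first-coordinate element to show that $\bm{b}\neq\bm{b}'$ give distinct sets, which makes explicit what the paper compresses into ``follows from the proof of Proposition~\ref{pro2.10}.''
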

\begin{proof}
For any $\bm{b} \in B_q$, since $\mathcal{A}_{\bm{b}} \subset \Gamma_q$, we have $\cup_{\bm{b} \in B_q} \mathcal{A}_{\bm{b}} \subset \Gamma_q$. On the other hand, take any $\lambda \in \Gamma_q$ and define $\mathcal{A}_{\lambda} \equiv \lambda + A_q \pmod{Q}$. Then $\lambda \in \mathcal{A}_{\lambda}$, and Proposition \ref{pro2.9} implies that $\mathcal{A}_{\lambda}$ is a maximal orthogonal set for $\mu_{Q,D}$ in $\Gamma_q$. According to Proposition \ref{pro2.10}, there exists a unique element $\bm{b} \in \mathcal{A}_{\lambda} \cap B_q$. Furthermore, by Proposition \ref{pro2.9}, we have $\mathcal{A}_{\lambda} \equiv \bm{b} + A_q \pmod{Q}$. Thus, $\lambda \in \mathcal{A}_{\bm{b}}$, and hence $\Gamma_q \subset \cup_{\bm{b} \in B_q} \mathcal{A}_{\bm{b}}$. The disjointness of the union follows from the proof of Proposition \ref{pro2.10}.
\end{proof}
Next, we present an example to illustrate the decomposition of $\Gamma_q$.
\begin{exam}\label{exam2.11}
	Let $n=2$, $p=3$, $q=2$ and $\bm{a}=(1,1)^t$. Then
	\[
	\Gamma_q = \left\{(i,j)^t \in \mathbb{Z}^2 : -3 \le i,j < 3\right\},\;\;
	B_q = \left\{(i,j)^t \in \Gamma_q : -1 \le i < 1\right\},
	\]
	and
	\[
	A_q = 2\{\bm{0}, \bm{a}, 2\bm{a}\}=2\left\{(0,0)^t, (1,1)^t, (2,2)^t\right\}.
	\]
By direct computation, we have
\begin{align*}
	&\mathcal{A}_{(-1,-3)^t} = \left\{(-1,-3)^t, (1,-1)^t, (-3,1)^t\right\}, &
	&\mathcal{A}_{(-1,-2)^t} = \left\{(-1,-2)^t, (1,0)^t, (-3,2)^t\right\},\\
	&\mathcal{A}_{(-1,-1)^t} = \left\{(-1,-1)^t, (1,1)^t, (-3,-3)^t\right\}, &
	&\mathcal{A}_{(-1,0)^t} = \left\{(-1,0)^t, (1,2)^t, (-3,-2)^t\right\},\\
	&\mathcal{A}_{(-1,1)^t} = \left\{(-1,1)^t, (1,-3)^t, (-3,-1)^t\right\},  &
	&\mathcal{A}_{(-1,2)^t} = \left\{(-1,2)^t, (1,-2)^t, (-3,0)^t\right\},\\
	&\mathcal{A}_{(0,-3)^t} = \left\{(0,-3)^t, (2,-1)^t, (-2,1)^t\right\}, &
	&\mathcal{A}_{(0,-2)^t} = \left\{(0,-2)^t, (2,0)^t, (-2,2)^t\right\},\\
	&\mathcal{A}_{(0,-1)^t} = \left\{(0,-1)^t, (2,1)^t, (-2,-3)^t\right\},  &
	&\mathcal{A}_{(0,0)^t} = \left\{(0,0)^t, (2,2)^t, (-2,-2)^t\right\},\\
	&\mathcal{A}_{(0,1)^t} = \left\{(0,1)^t, (2,-3)^t, (-2,-1)^t\right\}, &
	&\mathcal{A}_{(0,2)^t} = \left\{(0,2)^t, (2,-2)^t, (-2,0)^t\right\}.
\end{align*}
It follows that
\begin{equation*}
	\Gamma_q = \bigcup_{(i,j)^t \in B_q} \mathcal{A}_{(i,j)^t}.
\end{equation*}
For any $(i,j)^t \in B_q$, the set $\mathcal{A}_{(i,j)^t} \equiv (i,j)^t + A_q \pmod{6}$ is a maximal orthogonal set for $\mu_{Q,D}$ in $\Gamma_q$.
\end{exam}

\section{ \bf{Maximal orthogonal sets and spectra} }
In this section, we focus on proving results concerning the spectral structure. To begin, we prove Theorem \ref{thm1.1}, which states that a maximal orthogonal set of $\mu_{Q,D}$ can be expressed by the maximal mapping $\gamma$. Below is an intuitive illustration of the maximal map $\gamma$.
\par % 换行（可选，若要图片在语句下一行）
\noindent % 取消首行缩进
\begin{minipage}{\textwidth}
	\centering
	% 第一张图
	\begin{minipage}{0.82\textwidth}
		\centering
		\includegraphics[angle=90, width=\textwidth]{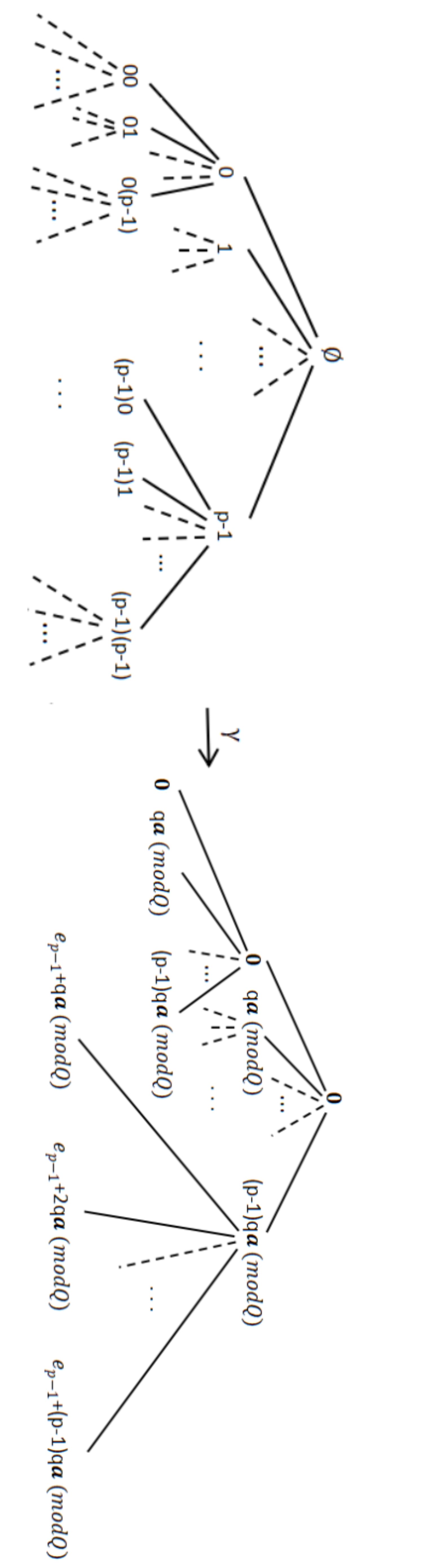}
		\footnotesize Figure 1. Illustration of a maximal mapping $\gamma$
		\label{tree1}
	\end{minipage}
\end{minipage}

\begin{proof}[\textbf{Proof of Theorem \ref{thm1.1}}]
We first prove the sufficiency. For any $\mathbf{I} \neq \mathbf{J} \in \Sigma_p^\gamma$, let $k$ be the smallest positive integer such that $\mathbf{I}|_k \neq \mathbf{J}|_k$. It is easy to see that
\begin{align*}
	\gamma^*(\mathbf{I}) - \gamma^*(\mathbf{J}) &= (pq)^{k-1}(\gamma(\mathbf{I}|_k) - \gamma(\mathbf{J}|_k) + pqz_1) \\
	&= (pq)^{k-1}((i_k - j_k)q\bm{a} + pqz_2) \in \mathcal{Z}(\widehat{\delta}_{Q^{-k}D}) \subset \mathcal{Z}(\widehat{\mu}_{Q,D}),
\end{align*}
where $z_1, z_2 \in \mathbb{Z}^n$. Thus, by the properties of the bi-zero set, it follows that $\gamma^*(\Sigma_p^\gamma)$ forms an orthogonal set for $\mu_{Q,D}$. Next, we demonstrate that $\gamma^*(\Sigma_p^\gamma)$ is a maximal orthogonal set for $\mu_{Q,D}$. Suppose there exists $l \in \mathbb{R}^n \setminus \gamma^*(\Sigma_p^\gamma)$ such that $\{l\} \cup \gamma^*(\Sigma_p^\gamma)$ is also an orthogonal set of $\mu_{Q,D}$. Since $\bm{0} \in \gamma^*(\Sigma_p^\gamma)$ and $\mathcal{Z}(\widehat{\mu}_{Q,D}) \subset \mathbb{Z}^n$, it follows that $l \in \mathcal{Z}(\widehat{\mu}_{Q,D}) \subset \mathbb{Z}^n$. Thus, via the $Q$-adic expansion, $l$ admits the representation
\begin{equation*}
	l = \sum_{k=1}^{\infty} c_k (pq)^{k-1} \quad \text{with } c_k \in \Gamma_q,
\end{equation*}
where $c_k = \bm{0}$ for all sufficiently large $k$. We assert that there exists $i_1 \in \Sigma_p$ such that $\gamma(i_1) = c_1$.
Suppose to the contrary that this assertion fails; then for any $i \in \Sigma_p$, there exists $\mathbf{J}_i$ for which
\[
\gamma^*(i\mathbf{J}_i0^\infty) - l = \gamma(i) - c_1 + pq\omega \in \mathcal{Z}(\widehat{\mu}_{Q,D}),
\]
where $\omega \in \mathbb{Z}^n$. Since $\gamma(i) - c_1 \in (-pq, pq)^n \setminus\{\bm{0}\}$, we have
\begin{equation*}
	\gamma(i) - c_1 \in \mathcal{Z}(\widehat{\delta}_{Q^{-1}D}) + pq\mathbb{Z}^n.
\end{equation*}
This implies that $\{\gamma(i) : i \in \Sigma_p\} \cup \{c_1\}$ is an orthogonal set with respect to $\delta_{Q^{-1}D}$. This, however, contradicts the fact that $\{\gamma(i) : i \in \Sigma_p\}$ is a spectrum of $\delta_{Q^{-1}D}$, and thus our assertion is valid. Therefore, there exists $i_1 \in \Sigma_p$ such that $\gamma(i_1) = c_1$. Next, by the same reasoning applied to $c_2$, we deduce that there exists $i_2 \in \Sigma_p$ such that $\gamma(i_1i_2) = c_2$. Collectively, we infer that there exists $\mathbf{I} \in \Sigma_p^\gamma$ for which $\gamma^*(\mathbf{I}) = l$. This establishes that $\Lambda = \gamma^*(\Sigma_p^\gamma)$ is a maximal orthogonal set.
	
For the necessity, suppose that $\Lambda$ is a maximal orthogonal set of $\mu_{Q,D}$, then $\Lambda \subset \mathcal{Z}(\widehat{\mu}_{Q,D}) \subset \mathbb{Z}^n$. Let $\Lambda = \{\lambda_n\}_{n=0}^\infty$ with $\lambda_0 = \bm{0}$, where each $\lambda_n$ has the $Q$-adic expansion
\[
\lambda_n = \sum_{k=1}^\infty c_{n,k} (pq)^{k-1}
\]
with $c_{n,k} \in \Gamma_q$ and $c_{n,k} = \bm{0}$ for all sufficiently large $k$. Denote $C_{\emptyset}=\{c_{n,1}:n\ge0\}$, where $c_{0,1}=\bm{0}. $ For any distinct $l_1, l_2 \in C_\emptyset$, there exist indices $n, m$ such that $l_1 = c_{m,1}$, $l_2 = c_{n,1}$, and
\begin{equation*}
	l_1 - l_2 = c_{m,1} - c_{n,1} \equiv \lambda_m - \lambda_n \pmod{Q}, \quad l_1 - l_2 \in \mathcal{Z}(\widehat{\delta}_{Q^{-1}D}).
\end{equation*}
It follows that $C_\emptyset$ is an orthogonal set for $\delta_{Q^{-1}D}$. We now show that $C_\emptyset$ is the maximal orthogonal set of $\delta_{Q^{-1}D}$ within $\Gamma_q$. Suppose otherwise; then there exists $l_3 \in \Gamma_q$ such that $C_\emptyset - l_3 \subset \mathcal{Z}(\widehat{\delta}_{Q^{-1}D})$.  Furthermore, $\Lambda - l_3 \subset \mathcal{Z}(\widehat{\delta}_{Q^{-1}D}) + pq\mathbb{Z}^n \subset \mathcal{Z}(\widehat{\mu}_{Q,D})$, which contradicts the maximality of $\Lambda$. We thus conclude that $C_\emptyset$ is the maximal orthogonal set of $\delta_{Q^{-1}D}$ in $\Gamma_q$. We define a mapping $\gamma: \Sigma_p^* \to \Gamma_q$ by $\gamma(i) = iq\bm{a} \pmod{Q} \in \Gamma_q$ for all $i \in \Sigma_p$, and we set
 \begin{equation*}
 	C_i = \{ c_{n,2} : c_{n,1} = \gamma(i),\ n \ge 0 \}.
 \end{equation*}
For any $i \in \Sigma_p$, it is easy to verify that $C_i$ is a nonempty set. Similarly, we can show that $C_i$ is also a maximal orthogonal set of $\delta_{Q^{-1}D}$ within $\Gamma_q$ for any $i \in \Sigma_p$. By Proposition \ref{pro2.9}, $C_i \equiv e_i + A_q \pmod{Q}$ for some $e_i \in B_q$. We next extend the definition to $\gamma(ij) = e_i + jq\bm{a} \pmod{Q} \in \Gamma_q$. For any $\mathbf{I} \in \Sigma_p^k$, the set
\begin{equation}\label{eq2.2}
	C_{\mathbf{I}} = \left\{ c_{n,k+1} : c_{n,1} = \gamma(\mathbf{I}|_1),\ c_{n,2} = \gamma(\mathbf{I}|_2),\dots,\ c_{n,k} = \gamma(\mathbf{I}|_k),\ n \ge 0 \right\} \subset \Gamma_q
\end{equation}
is nonempty. For any distinct $r_1, r_2 \in C_{\mathbf{I}}$, there exist $n, m$ such that $\lambda_m, \lambda_n \in \Lambda$ with $r_1 = c_{m,k+1}$ and $r_2 = c_{n,k+1}$ (where $r_1 \neq r_2$). We then have
\begin{equation*}
	\lambda_m - \lambda_n = (pq)^k (r_1 - r_2 + pqz_1) \in \mathcal{Z}(\widehat{\mu}_{Q,D}).
\end{equation*}
Since $r_1, r_2 \in \Gamma_q$, it follows that $r_1 - r_2 \in \mathcal{Z}(\widehat{\delta}_{Q^{-1}D})$. Similarly, we can show that $C_{\mathbf{I}}$ is also a maximal orthogonal set of $\delta_{Q^{-1}D}$ within $\Gamma_q$; the details are omitted here for brevity. By Proposition \ref{pro2.9} again, $C_{\mathbf{I}} \equiv e_{\mathbf{I}} + A_q \pmod{Q}$ for some $e_{\mathbf{I}} \in B_q$. We then extend $\gamma$ to act on $\mathbf{I}j$ via $\gamma(\mathbf{I}j) = e_{\mathbf{I}} + jq\bm{a} \pmod{Q} \in \Gamma_q$. By induction, the mapping $\gamma$ is well-defined. We now establish that $\Lambda = \gamma^*(\Sigma_p^\gamma)$.  For any $n > 0$, one has
	\begin{equation*}
		\lambda_{n}=\sum_{k=1}^{\infty}c_{n,k}(pq)^{k-1}	
	\end{equation*}
	with $c_{n,N}\neq\bm{0}$ and $c_{n,k}=\bm{0} $ for $k>N$. Based on the above discussion, we may select $\mathbf{I} \in \Sigma_p^N$ for which \eqref{eq2.2} holds. Note that $\bm{0} \in C_{\mathbf{I}}$ due to $c_{n,N+1} = \bm{0}$; thus, $C_{\mathbf{I}} \equiv A_q \pmod{Q}$. In general, we have $C_{\mathbf{I}0^l} \equiv A_q \pmod{Q}$ for all $l \ge 1$, as $c_{n,k} = \bm{0}$ for all $k > N$. This implies that $\lambda_n = \gamma^*(\mathbf{I})$ by definition of $\gamma$. Hence, $\Lambda\subset\gamma^{*}(\Sigma_{p}^{\gamma})$. Conversely, for each $\mathbf{I} \in \Sigma_p^\gamma$, $\gamma^*(\mathbf{I}) = \sum_{k=1}^\infty \gamma(\mathbf{I}|_k) (pq)^{k-1}$. Let $N$ be a positive integer with $\gamma(\mathbf{I}|_N) \neq \bm{0}$ and $\gamma(\mathbf{I}|_k) = \bm{0}$ for all $k > N$. By the definition of $\gamma$ following \eqref{eq2.2}, we have $C_{\mathbf{I}|_k} \equiv A_q \pmod{Q}$ for all $k > N$. Then, the set
	\begin{equation*}
		\left\{ n : c_{n,1} = \gamma(\mathbf{I}|_1),\ c_{n,2} = \gamma(\mathbf{I}|_2),\ \dots,\ c_{n,N} = \gamma(\mathbf{I}|_N),\ c_{n,k} = \gamma(\mathbf{I}|_k) = \bm{0} \text{ for all } k \ge N \right\}
	\end{equation*}
	is nonempty. This implies that there exists an $n$ such that $\lambda_n = \gamma^*(\mathbf{I})$, and thus $\Lambda = \gamma^*(\Sigma_p^\gamma)$.
	
	Lastly, we establish that $\gamma$ is a maximal mapping, i.e., it satisfies Definition \ref{defn1.3}. It is clear that $\gamma$ satisfies conditions (i) and (ii) in the definition of maximal mappings. To verify Condition (iii) of Definition \ref{defn1.3} for $\gamma$, fix an arbitrary $\mathbf{I} \in \Sigma_p^*$. By \eqref{eq2.2}, $C_{\mathbf{I}}$ is nonempty. Since $\Lambda = \gamma^*(\Sigma_p^\gamma)$, there exists $\lambda \in \Lambda$ and $\mathbf{J} \in \Sigma_p^*$ such that $\mathbf{IJ} \in \Sigma_p^\gamma$ and $\lambda = \gamma^*(\mathbf{IJ})$. For all sufficiently large $n$, we have $\gamma(\mathbf{IJ}0^n) = \bm{0}$, from which the assertion follows.
\end{proof}
	From Theorem \ref{thm1.1}, we already know that $\Lambda = \gamma^*(\Sigma_p^\gamma)$ is a maximal orthogonal set of $\mu_{Q,D}$ if and only if $\gamma$ is a maximal tree mapping. In the following, a simple conclusion can be proved.

\begin{lemma}\label{lem3.1}
Let $\gamma$ be a maximal mapping and let $k \ge 1$ be a positive integer. Then, $\Lambda = \gamma^*(\Sigma_p^k)$ is the spectrum of $\mu_k$.
\end{lemma}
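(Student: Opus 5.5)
We first fix the meaning of the statement: $\mu_k := \delta_{Q^{-1}D}*\delta_{Q^{-2}D}*\cdots*\delta_{Q^{-k}D}$. We interpret $\gamma^*(\Sigma_p^k)$ as the set of finite sums
\[
\gamma^*(\mathbf{I}) = \sum_{j=1}^{k}(pq)^{j-1}\gamma(\mathbf{I}|_j), \qquad \mathbf{I}\in\Sigma_p^k .
\]
The measure $\mu_k$ is a discrete measure supported on $Q^{-1}D+\cdots+Q^{-k}D$. This set has $p^k$ points, which are distinct since $\widehat{\delta}_D$ has zeros and $D$ is a complete digit structure modulo $p$. Hence $L^2(\mu_k)$ has dimension $p^k$. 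Consequently it suffices to prove two things: the set $\gamma^*(\Sigma_p^k)$ has exactly $p^k$ elements, and it is orthogonal for $\mu_k$. Indeed, $p^k$ mutually orthogonal nonzero exponentials in a $p^k$-dimensional space automatically form a basis.

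\textbf{Orthogonality.} We follow the sufficiency argument in the proof of Theorem \ref{thm1.1}. Take $\mathbf{I}\neq\mathbf{J}\in\Sigma_p^k$, and let $m\le k$ be the first index where they differ. Then $\mathbf{I}|_{m-1}=\mathbf{J}|_{m-1}$. By Definition \ref{defn1.3}(ii), $\gamma(\mathbf{I}|_m)-\gamma(\mathbf{J}|_m)\equiv (i_m-j_m)q\bm{a}\pmod{Q}$, with $i_m-j_m\not\equiv 0\pmod p$. Therefore
\[
\gamma^*(\mathbf{I})-\gamma^*(\mathbf{J})=(pq)^{m-1}\bigl((i_m-j_m)q\bm{a}+pq z\bigr),\qquad z\in\mathbb{Z}^n .
\]
Here the higher terms, of order $j>m$, are absorbed into $pq z$ because they carry an extra factor $(pq)$.

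Next we use \eqref{1.3}. Since $(i_m-j_m)q\bm{a}/(pq) = \frac{i_m-j_m}{p}\bm{a}$, the vector $Q^{-1}\bigl((i_m-j_m)q\bm{a}+pqz\bigr)$ lies in $\frac{j}{p}\bm{a}+\mathbb{Z}^n$ for some $j\in\{1,\dots,p-1\}$. This is a zero of $\widehat{\delta}_D$. Hence the difference lies in $Q^{m}\mathcal{Z}(\widehat{\delta}_D)=\mathcal{Z}(\widehat{\delta}_{Q^{-m}D})$. Because $\widehat{\mu}_k=\prod_{j=1}^k\widehat{\delta}_{Q^{-j}D}$ and $m\le k$, this is a zero of $\widehat{\mu}_k$.

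\textbf{Cardinality.} The same computation shows that distinct $\mathbf{I},\mathbf{J}$ give distinct values. If $\gamma^*(\mathbf{I})=\gamma^*(\mathbf{J})$, reducing modulo $(pq)^m$ would force $(i_m-j_m)q\bm{a}\in pq\mathbb{Z}^n$. That means $(i_m-j_m)\bm{a}\in p\mathbb{Z}^n$, which is impossible: $p$ is prime, $p\nmid(i_m-j_m)$, and the entries of $\bm{a}$ lie in $[1,p-1]$. So $\#\gamma^*(\Sigma_p^k)=p^k$.

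The step needing the most care is the dimension count for $L^2(\mu_k)$, i.e. that the $p^k$ atoms of $\mu_k$ are distinct. We will avoid arguing this separately. The orthogonal family just constructed has $p^k$ members, and orthogonal exponentials in $L^2(\mu_k)$ are linearly independent. Since $\dim L^2(\mu_k)\le p^k$, equality is forced and the family is a basis. Alternatively, we can invoke Lemma \ref{lem2.3}(ii) with the Hadamard triples $(Q,D,\{\gamma(\mathbf{I}i):i\in\Sigma_p\})$, supplied by Proposition \ref{pro2.9}. That lemma yields the compatible pair directly, once we note that the $\mathbf{I}$-dependence of the digit sets is harmless because only differences at the first splitting level matter.
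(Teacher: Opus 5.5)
Your proof is correct and is essentially the paper's argument: split at the first index $m$ where $\mathbf{I},\mathbf{J}$ differ, place $\gamma^*(\mathbf{I})-\gamma^*(\mathbf{J})$ in $\mathcal{Z}(\widehat{\delta}_{Q^{-m}D})\subset\mathcal{Z}(\widehat{\mu}_k)$, and compare the count $p^k$ with $\dim L^2(\mu_k)$. Your linear-independence argument giving $\dim L^2(\mu_k)=p^k$ fills in a point the paper only asserts, whereas the closing appeal to Lemma~\ref{lem2.3}(ii) is unnecessary (and that lemma, which uses one fixed digit set per level, does not literally cover the prefix-dependent digits of $\gamma$).
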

\begin{proof}
	For any $\mathbf{I}, \mathbf{J} \in \Sigma_p^k$ with $\mathbf{I} \neq \mathbf{J}$, let $l$ be the smallest positive integer such that $\mathbf{I}|_l \neq \mathbf{J}|_l$. Then,
	\begin{equation*}
		\gamma^*(\mathbf{I}) - \gamma^*(\mathbf{J}) = \sum_{j=l}^k (pq)^{j-1} ( \gamma(\mathbf{I}|_j) - \gamma(\mathbf{J}|_j) ) \in \mathcal{Z}(\widehat{\delta}_{Q^{-l}D}) \subset \mathcal{Z}(\widehat{\mu}_k).
	\end{equation*}
Thus, $\{\gamma^*(\mathbf{I}) : \mathbf{I} \in \Sigma_p^k\}$ is an orthogonal set for $\mu_k$. Moreover, $\#\{\gamma^*(\mathbf{I}) : \mathbf{I} \in \Sigma_p^k\} = p^k$, which coincides with the dimension of $L^2(\mu_k)$. We therefore conclude that $\{\gamma^*(\mathbf{I}) : \mathbf{I} \in \Sigma_p^k\}$ is the spectrum of $\mu_k$.
\end{proof}

\begin{lemma}\label{lem3.2}
	Let $\gamma$ be a maximal mapping from $\Sigma_{p}^{*}$ to $\Gamma_{q}$. If for each $\mathbf{I} \in \Sigma_{p}^{*}$ there exists a $\mathbf{J}_{\mathbf{I}}$ such that $\mathbf{I}\mathbf{J}_{\mathbf{I}} \in \Sigma_{p}^{\gamma}$ and $\sup_{\mathbf{I} \in \Sigma_{p}^{*}} \Theta_{\mathbf{I}}(\mathbf{J}_{\mathbf{I}}) < \infty$, then there exists a constant $c$ and a word $\mathbf{L}$ for $\mathbf{I} \in \Sigma_{p}^{k}$ such that $\mathbf{I}\mathbf{L} \in \Sigma_{p}^{\gamma}$ and
	\begin{equation*}
		| \widehat{\mu}_{k}( \xi + \gamma^{*}(\mathbf{I}\mathbf{L}) ) | \le c | \widehat{\mu}_{Q,D}( \xi + \gamma^{*}(\mathbf{I}\mathbf{L}) ) |, \quad k \ge 1.
	\end{equation*}
\end{lemma}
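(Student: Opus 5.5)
Take $\mathbf{L}=\mathbf{J}_{\mathbf{I}}$, the word supplied by the hypothesis, and write $\lambda=\gamma^{*}(\mathbf{I}\mathbf{L})$. The plan is to factor $\widehat{\mu}_{Q,D}$ through $\widehat{\mu}_{k}$ and bound the remaining tail from below. Here $\mu_k=\delta_{Q^{-1}D}*\cdots*\delta_{Q^{-k}D}$. By \eqref{eq1.2},
\[
\widehat{\mu}_{Q,D}(\xi)=\widehat{\mu}_{k}(\xi)\,\widehat{\mu}_{Q,D}(Q^{-k}\xi),\qquad Q^{-k}=(pq)^{-k}I .
\]
So it suffices to find $c>0$, independent of $k$, $\mathbf{I}$ and $\xi$ in the relevant range, with $|\widehat{\mu}_{Q,D}((pq)^{-k}(\xi+\lambda))|\ge c^{-1}$. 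Since $\widehat{\mu}_{k}$ is $Q^k\mathbb{Z}^n$-periodic, we may restrict $\xi$ to a compact set, e.g.\ $\xi\in[-\frac12,\frac12]^n$, or more generally a bounded set. (The lemma is presumably meant for bounded $\xi$, as needed in Lemma \ref{lem2.2}.)

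Next write $\gamma^{*}(\mathbf{I}\mathbf{L})=\gamma^{*}(\mathbf{I})+(pq)^{k}\eta$, where $\eta=\sum_{j\ge1}(pq)^{j-1}\gamma((\mathbf{I}\mathbf{L})|_{k+j})$ is a finite $Q$-adic sum with digits $\Upsilon_\gamma(\mathbf{I}\mathbf{L})$ beyond position $k$. Then
\[
(pq)^{-k}(\xi+\lambda)=(pq)^{-k}(\xi+\gamma^{*}(\mathbf{I}))+\eta ,
\]
and the first term lies in a fixed compact set $K$, since $\gamma^*(\mathbf{I})\in\sum_{j<k}(pq)^{j}\Gamma_q$ is bounded by $C(pq)^k$. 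Expanding the infinite product,
\[
\widehat{\mu}_{Q,D}(x+\eta)=\prod_{m\ge1}\widehat{\delta}_D\big((pq)^{-m}(x+\eta)\big),\qquad x\in K .
\]
The factors with $m$ large exceed the effective length of $\eta$, so their arguments are small and the product over them is uniformly bounded below, because $\widehat{\delta}_D(0)=1$ and the tail product converges uniformly.

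The main obstacle is controlling the finitely many factors with $m\le \ell(\eta)$, uniformly in $\mathbf{I}$, since $\ell(\eta)$ itself can be unbounded. This is where $\Theta_{\mathbf{I}}(\mathbf{L})$ enters. The digits of $\eta$ are of the form $e_{\mathbf{J}}+iq\bm a \pmod Q$. Whenever the tree index digit is $0$ and $\gamma=\bm 0$ persistently, the corresponding factor behaves like the identity contribution. The genuinely dangerous factors, near $\mathcal{Z}(\widehat{\delta}_D)$, can only occur within the blocks $\Upsilon_\gamma(\mathbf{I}\mathbf{L})_{n_j,n_{j+1}}$ counted in $\Theta$.

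I would first try a compactness argument. Each factor $\widehat{\delta}_D((pq)^{-m}(x+\eta))$ is a continuous function of $x$ and of the finitely many leading digits $\eta_m,\eta_{m+1},\dots$ in the finite set $\Gamma_q$. With the number of nontrivial blocks bounded by $M:=\sup\Theta$, the total contribution ranges over a finite family of continuous, nonvanishing functions on $K$. Nonvanishing holds because $x+\eta$ with $x\in K$ corresponding to $\xi$ in a small neighborhood avoids $\mathcal{Z}(\widehat{\mu}_{Q,D})$ on a shrunken domain, or alternatively one shrinks the $\xi$-range and uses analyticity. The minimum of these functions then gives $c^{-1}$.

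Verifying nonvanishing is the delicate point. I would use the fact that the zeros of $\widehat{\delta}_D$ are $\frac jp\bm a+\mathbb{Z}^n$, together with the block structure, to show that $(pq)^{-m}(x+\eta)\notin\mathcal{Z}(\widehat\delta_D)$ for $x$ near $0$. If this fails, I would choose $\xi$ in a small ball $B_\epsilon$ and rely on Lemma \ref{lem2.2}(iii) to extend the conclusion afterwards.
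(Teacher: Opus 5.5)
There is a genuine gap. Your reduction via $\widehat{\mu}_{Q,D}(\xi)=\widehat{\mu}_k(\xi)\,\widehat{\mu}_{Q,D}((pq)^{-k}\xi)$ to a uniform lower bound for $|\widehat{\mu}_{Q,D}(x+\eta)|$ is the right start. Here $x=(pq)^{-k}(\xi+\gamma^{*}(\mathbf{I}))$ lies in a fixed cube and $\eta\in\mathbb{Z}^n$ is the tail. The first problem is that taking $\mathbf{L}=\mathbf{J}_{\mathbf{I}}$ breaks the lemma. If $e_{\mathbf{I}}=\bm 0$ and $\mathbf{J}_{\mathbf{I}}$ begins with a digit $i\neq 0$, the first tail digit is $\gamma(\mathbf{I}i)\equiv iq\bm a \pmod Q$, so $\eta\in\mathcal{Z}(\widehat{\delta}_{Q^{-1}D})\subset\mathcal{Z}(\widehat{\mu}_{Q,D})$.

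Concretely, let all $e_{\mathbf{J}}=\bm 0$. Nothing in the hypothesis prevents $\mathbf{J}_{0^k}=10^\infty$, since $\Theta_{0^k}(10^\infty)=1$. Then $\lambda=(pq)^k v$ with $v\in\Gamma_q$, $v\equiv q\bm a \pmod Q$, and by $(pq)^k\mathbb{Z}^n$-periodicity of $\widehat{\mu}_k$,
\[
\frac{|\widehat{\mu}_{Q,D}(\xi+\lambda)|}{|\widehat{\mu}_{k}(\xi+\lambda)|}=\bigl|\widehat{\mu}_{Q,D}\bigl((pq)^{-k}\xi+v\bigr)\bigr|\longrightarrow|\widehat{\mu}_{Q,D}(v)|=0\qquad(k\to\infty)
\]
for every fixed $\xi$ with $\|\xi\|_\infty<1$; at $\xi=\bm 0$ the ratio is already $0$ for every $k$. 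So no constant works. Shrinking the $\xi$-range or invoking Lemma \ref{lem2.2}(iii) cannot help, because the bad point is exactly $x=(pq)^{-k}\xi\to\bm 0$.

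You must choose $\mathbf{L}$ yourself, as the paper does with its $\mathbf{J}=0\mathbf{J}_{\mathbf{I}}$. Take $\mathbf{L}=0^\infty$ if $e_{\mathbf{I}0^l}=\bm 0$ for all $l\ge 0$. Otherwise take $\mathbf{L}=0^{l+1}\mathbf{J}_{\mathbf{I}0^{l+1}}$ with $l$ minimal such that $e_{\mathbf{I}0^l}\neq\bm 0$. Then every block of the tail is either $\bm 0$ or begins with a digit in $B_q\setminus\{\bm 0\}$. Since $B_q\cap\mathcal{Z}(\widehat{\delta}_{Q^{-1}D})=\emptyset$ (an element $\equiv jq\bm a$ has first coordinate a multiple of $q$ that is $\not\equiv 0 \pmod{pq}$, hence outside $[-\frac q2,\frac q2)$), no block lies in $\mathcal{Z}(\widehat{\mu}_{Q,D})$. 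Moreover $\Theta_{\mathbf{I}}(\mathbf{L})\le 1+\sup\Theta$ stays bounded.

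Second, even with this $\mathbf{L}$, the uniform lower bound, which is the entire content of the lemma, is not proved. Your ``finite family of continuous nonvanishing functions'' is not finite. $\Theta$ bounds the effective lengths of the blocks but not the runs of zero digits between them, so $\eta$ ranges over infinitely many vectors. The factor-by-factor analysis of $\widehat{\delta}_D((pq)^{-m}(x+\eta))$ is never carried out.

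The missing step is to cut at block boundaries. Suppose $x\in[-\frac56,\frac56]^n$ (true initially for $\xi\in[-1,1]^n$), $\zeta\in\mathbb{Z}^n$ is a block, the next block starts $g$ places later, and $w$ is the remaining tail. Then $(pq)^g\mathbb{Z}^n$-periodicity of $\widehat{\mu}_g$ and $|\widehat{\mu}_g|\ge|\widehat{\mu}_{Q,D}|$ give
\[
\bigl|\widehat{\mu}_{Q,D}\bigl(x+\zeta+(pq)^g w\bigr)\bigr|\ge\bigl|\widehat{\mu}_{Q,D}(x+\zeta)\bigr|\,\bigl|\widehat{\mu}_{Q,D}(x'+w)\bigr|,\qquad x'=(pq)^{-g}(x+\zeta)\in[-\tfrac56,\tfrac56]^n .
\]
Now use $\mathcal{Z}(\widehat{\mu}_{Q,D})\subset\mathbb{Z}^n$. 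A cube $\zeta+[-\frac56,\frac56]^n$ contains no integer point other than $\zeta$, so if $\zeta\notin\mathcal{Z}(\widehat{\mu}_{Q,D})$ it contains no zero at all. Set $M:=\sup_{\mathbf{I}}\Theta_{\mathbf{I}}(\mathbf{L})$. Every block has $\|\zeta\|_\infty\le(pq)^M$, so compactness of these finitely many cubes gives $\eta_0:=\min|\widehat{\mu}_{Q,D}|>0$ on their union. There are at most $M+1$ blocks, so $c=\eta_0^{-(M+1)}$ works.

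This lattice observation is the idea your proposal lacks: once the fractional part is kept in a cube of half-side less than $1$, only the integer part of each block matters.
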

\begin{proof}
For any $\mathbf{I} \in \Sigma_{p}^{k}$, if $e_{\mathbf{I}0^{l}} = \bm{0}$ for $l \ge 0$, then we can choose $\mathbf{J} = 0^{\infty}$ so that $\mathbf{IJ} \in \Sigma_{p}^{\gamma}$ and
	\begin{align}\label{eq3.1}
	\nonumber	\widehat{\mu}_{Q,D}( \xi + \gamma^{*}(\mathbf{IJ}) )
		&= \widehat{\mu}_{k}( \xi + \gamma^{*}(\mathbf{IJ}) )
		\widehat{\mu}_{Q,D}\left( \frac{\xi + \gamma^{*}(\mathbf{IJ})}{(pq)^{k}} \right)\\
		&= \widehat{\mu}_{k}( \xi + \gamma^{*}(\mathbf{I}) )
		\widehat{\mu}_{Q,D}\left( \frac{\xi + \gamma^{*}(\mathbf{I})}{(pq)^{k}} \right).
	\end{align}
Otherwise, let $l$ be the smallest integer such that $e_{\mathbf{I}0^{l}} \neq \bm{0}$. Without loss of generality, we may assume that $l = 1$. Under certain conditions, there exists a $\mathbf{J}_{\mathbf{I}}$ such that $\mathbf{I}\mathbf{J}_{\mathbf{I}} \in \Sigma_{p}^{\gamma}$ and $\sup_{\mathbf{I} \in \Sigma_{p}^{*}} \Theta_{\mathbf{I}}(\mathbf{J}_{\mathbf{I}}) < \infty$. Taking $\mathbf{J} = 0\mathbf{J}_{\mathbf{I}}$, and similar to \eqref{eq2.4}, we obtain the following decomposition:
\begin{equation*}
	\Upsilon_{\gamma}(\mathbf{IJ}) = \Upsilon_{\gamma}(\mathbf{I}) \Upsilon_{\gamma}(\mathbf{IJ})_{k_{0},k_{1}} \dots \Upsilon_{\gamma}(\mathbf{IJ})_{k_{m},\infty},
\end{equation*}
where $k_{0} = k + 1$ and $\gamma(\mathbf{IJ}_{k_{i}}) \neq \bm{0}$. Then,
\begin{align*}
	\gamma^{*}(\mathbf{IJ})&=\gamma^{*}(\mathbf{I})+\sum_{i=0}^{m}\sum_{l=k_{i}}^{k_{i+1}-1}\gamma(\mathbf{IJ}|_{l})(pq)^{l-1}\\
	&=\gamma^{*}(\mathbf{I})+\sum_{i=0}^{m}\sum_{l=k_{i}}^{k_{i}+\ell(\Upsilon_{\gamma}(\mathbf{IJ})_{k_{i},k_{i+1}})-1}\gamma(\mathbf{IJ}|_{l})(pq)^{l-1}.
\end{align*}
In fact, it suffices to consider the case $m = 1$, as the method extends to all other cases identically. When $m = 1$, the above equation reduces to
\begin{equation*}
	\gamma^{*}(\mathbf{IJ}) = \gamma^{*}(\mathbf{I}) + \sum_{l=k_{0}}^{k_{0}+\ell(\Upsilon_{\gamma}(\mathbf{IJ})_{k_{0},k_{1}})-1}\gamma(\mathbf{IJ}|_{l})(pq)^{l-1} + \sum_{l=k_{1}}^{k_{1}+\ell(\Upsilon_{\gamma}(\mathbf{IJ})_{k_{1},k_{2}})-1}\gamma(\mathbf{IJ}|_{l})(pq)^{l-1}.
\end{equation*}
Thus,
\begin{align}\label{eq3.2}
	\begin{split}
		\widehat{\mu}_{Q,D}(\xi+\gamma^{*}(\mathbf{I}\mathbf{J}))&=\widehat{\mu}_{k}(\xi+\gamma^{*}(\mathbf{I}\mathbf{J}))\widehat{\mu}_{Q,D}\left(\frac{\xi+\gamma^{*}(\mathbf{I}\mathbf{J})}{(pq)^{k}}\right)\\
		&=\widehat{\mu}_{k}(\xi+\gamma^{*}(\mathbf{I}))\widehat{\mu}_{Q,D}\left(\frac{\xi+\gamma^{*}(\mathbf{I})}{(pq)^{k}}+\sum_{i=1}^{\ell(\Upsilon_{\gamma}(\mathbf{IJ})_{k_{0},k_{1}})}\gamma(\mathbf{IJ}|_{i+k})(pq)^{i-1}\right.\\
		&\left.+\sum_{i=k_{1}-k}^{k_{1}-k+\ell(\Upsilon_{\gamma}(\mathbf{IJ})_{k_{1},k_{2}})-1}\gamma(\mathbf{IJ}|_{i+k})(pq)^{i-1}\right).
	\end{split}
\end{align}
For convenience, we define
\begin{equation*}
	\xi_{1}=\frac{\xi+\gamma^{*}(\mathbf{I})}{(pq)^{k}},\quad \xi_{2}=\frac{\xi_{1}+\sum_{i=1}^{\ell(\Upsilon_{\gamma}(\mathbf{IJ})_{k_{0},k_{1}})}\gamma(\mathbf{IJ}|_{i+k})(pq)^{i-1}}{(pq)^{k_{1}-k-1}},
\end{equation*}
and
\begin{equation*}
	\zeta_{1}=\sum_{i=1}^{\ell(\Upsilon_{\gamma}(\mathbf{IJ})_{k_{0},k_{1}})}\gamma(\mathbf{IJ}|_{i+k})(pq)^{i-1},\quad \zeta_{2}=\sum_{i=1}^{\ell(\Upsilon_{\gamma}(\mathbf{IJ})_{k_{1},k_{2}})}\gamma(\mathbf{IJ}|_{k_{1}+i-1})(pq)^{i-1}.
\end{equation*}
We then obtain the following inequality:
\begin{align}\label{eq3.3}
	\begin{split}
		\left|\widehat{\mu}_{Q,D}(\xi_{1}+\zeta_{1}+(pq)^{k_{1}-k-1}\zeta_{2})\right|
		&=\left|\widehat{\mu}_{k_{1}-k-1}(\xi_{1}+\zeta_{1}+(pq)^{k_{1}-k-1}\zeta_{2})\right|
		\left|\widehat{\mu}_{Q,D}(\xi_{2}+\zeta_{2})\right|\\
		&=|\widehat{\mu}_{k_{1}-k-1}(\xi_{1}+\zeta_{1})|
		|\widehat{\mu}_{Q,D}(\xi_{2}+\zeta_{2})|\\
		&\ge|\widehat{\mu}_{Q,D}(\xi_{1}+\zeta_{1})|
		|\widehat{\mu}_{Q,D}(\xi_{2}+\zeta_{2})|.
	\end{split}
\end{align}
Note that $\gamma(\mathbf{IJ}|_{k+1}) = \gamma(\mathbf{I}0) = e_{\mathbf{I}} \in B_{q} \setminus \{\bm{0}\}$.
As $B_{q} \setminus \{\bm{0}\} \cap \mathcal{Z}(\widehat{\delta}_{Q^{-1}D}) = \emptyset$, we have $\zeta_1 := (\zeta_{1,1}, \zeta_{2,1}, \dots, \zeta_{n,1})^t \notin \mathcal{Z}(\widehat{\mu}_{Q,D})$. Let $\xi_1 := (\xi_{1,1}, \xi_{2,1}, \dots, \xi_{n,1})^t$.
From $\gamma(\mathbf{IJ}|_{j}) \in \Gamma_{q}$ and $\xi \in [-1,1]^n$, it follows that
\begin{equation*}
	|\zeta_{i,1}| \le \frac{pq}{2} \left(1 + pq + \dots + (pq)^{\ell(\Upsilon_{\gamma}(\mathbf{IJ})_{k_{0},k_{1}})-1}\right) \le (pq)^{\ell(\Upsilon_{\gamma}(\mathbf{IJ})_{k_{0},k_{1}})}
\end{equation*}
and since $pq\geq 3$,
\begin{equation*}
	|\xi_{i,1}| \le \frac{1 + \frac{pq}{2}(1 + pq + \dots + (pq)^{k-1})}{(pq)^k} =\frac{(pq)^{k+1}+pq-2}{2(pq-1)(pq)^{k}}\leq \frac{(pq)^{k+1}+pq-2}{\frac{4pq}{3}(pq)^{k}}\leq \frac{5}{6}
\end{equation*}
for $i = 1,2,\dots,n$. Similarly, we can show that $\xi_2 \in [-\frac{5}{6}, \frac{5}{6}]^n$, $\zeta_2 \notin \mathcal{Z}(\widehat{\mu}_{Q,D})$, and $\zeta_2 \in \mathbb{Z}^n \cap [-(pq)^{\ell(\Upsilon_{\gamma}(\mathbf{IJ})_{k_{0},k_{1}})}, (pq)^{\ell(\Upsilon_{\gamma}(\mathbf{IJ})_{k_{1},k_{2}})}]^n$. Then, by the continuity of $\widehat{\mu}_{Q,D}(\xi)$, there exists a constant $M_{\Upsilon}$ such that
$$\eta = \min_{\xi\in H}\{ | \widehat{\mu}_{Q,D}(\xi) | \} > 0,$$
where
\begin{equation*}
	H = \bigcup\left\{ \zeta + \left[-\frac{5}{6}, \frac{5}{6}\right]^n : \zeta \in \mathbb{Z}^n \cap [-M_{\Upsilon}, M_{\Upsilon}]^n, \zeta \notin \mathcal{Z}(\widehat{\mu}_{Q,D}) \right\}.
\end{equation*}
Therefore, combining \eqref{eq3.2} and \eqref{eq3.3}, we have
\begin{equation*}
	| \widehat{\mu}_{Q,D}(\xi + \gamma^{*}(\mathbf{IJ})) | \ge \eta^2 | \widehat{\mu}_{k}(\xi + \gamma^{*}(\mathbf{I})) |.
\end{equation*}
Furthermore, from \eqref{eq3.1}, we deduce that
\begin{equation*}
	| \widehat{\mu}_{Q,D}(\xi + \gamma^{*}(\mathbf{IJ})) | \ge \eta | \widehat{\mu}_{k}(\xi + \gamma^{*}(\mathbf{IJ})) | \ge \eta^2 | \widehat{\mu}_{k}(\xi + \gamma^{*}(\mathbf{IJ})) |.
\end{equation*}
In general, if $j \leq M_{\Upsilon}$, then
\begin{equation*}
	| \widehat{\mu}_{Q,D}(\xi + \gamma^{*}(\mathbf{IJ})) | \ge \eta^{j+1} | \widehat{\mu}_{k}(\xi + \gamma^{*}(\mathbf{IJ})) | \ge \eta^{M_{\Upsilon}+1} | \widehat{\mu}_{k}(\xi + \gamma^{*}(\mathbf{IJ})) |.
\end{equation*}
We set $c = \max\{ \eta^{-2}, \eta^{-M_{\Upsilon}-1} \}$, which completes the proof of the desired inequality.
\end{proof}
We now prove Theorem \ref{thm1.2} using the preceding two lemmas.
\begin{proof}[\textbf{Proof of Theorem \ref{thm1.2}}]
	Let $\gamma$ be a maximal tree mapping and $\Lambda = \gamma^{*}(\Sigma_{p}^{\gamma})$.
	Using the orthogonality of $\Lambda$ and Lemma \ref{lem2.2}, it follows that
	\begin{equation*}
		Q_{\mu,\Lambda}(\xi) = \sum_{i=1}^{\infty} \sum_{\substack{\mathbf{I} \in \Sigma_{p}^{\gamma} \\ \ell(\Upsilon_{\gamma}(\mathbf{I})) = i}} | \widehat{\mu}_{Q,D}(\xi + \gamma^{*}(\mathbf{I})) |^2 \leq 1.
	\end{equation*}
	For $\xi$ with $0 < \|\xi\| < 1$ and any $0 < \varepsilon < 1$, there exists an $N = N(\xi, \varepsilon)$ such that
	\begin{equation}\label{eq3.34}
		\sum_{\substack{\mathbf{I} \in \Sigma_{p}^{\gamma} \\ \ell(\Upsilon_{\gamma}(\mathbf{I})) > N}} | \widehat{\mu}_{Q,D}(\xi + \gamma^{*}(\mathbf{I})) |^2 \leq \varepsilon.
	\end{equation}
Let $\Omega_{N} := \{ \mathbf{I} \in \Sigma_{p}^{\gamma} : \ell(\Upsilon_{\gamma}(\mathbf{I})) \leq N \}$.
For any $\mathbf{I} \in \Omega_{N}$, we have $\mathbf{I}|_{N+1,\infty} = 0^\infty$, which implies that $\Omega_{N} \subset \Sigma_{p}^{N}0^\infty$.
Thus, $\#\Omega_{N} \leq p^N$. For $k > N$, we observe that
\begin{equation}\label{3.6}
	Q_{\mu,\Lambda}(\xi) \geq \sum_{\mathbf{I} \in \Omega_{N}} | \widehat{\mu}_{Q,D}(\xi + \gamma^{*}(\mathbf{I})) |^2 = \sum_{\mathbf{I} \in \Omega_{N}} | \widehat{\mu}_{k}(\xi + \gamma^{*}(\mathbf{I})) |^2 \left| \widehat{\mu}_{Q,D}\left( \frac{\xi + \gamma^{*}(\mathbf{I})}{(pq)^k} \right) \right|^2.
\end{equation}
For each $\mathbf{I} \in \Omega_{N}$,
\begin{equation*}
	\left\| \frac{\xi + \gamma^{*}(\mathbf{I})}{(pq)^k} \right\| \leq \frac{1 + (\sqrt{n}pq/2)\sum_{t=0}^{N-1}(pq)^t}{(pq)^k} < \frac{\sqrt{n}}{(pq - 1)(pq)^{k-N-1}}.
\end{equation*}
By continuity of $\widehat{\mu}_{Q,D}$ and $\widehat{\mu}_{Q,D}(\bm{0}) = 1$, there exists a sufficiently large $k$ such that
\begin{equation*}
	\left| \widehat{\mu}_{Q,D}\left( \frac{\xi + \gamma^{*}(\mathbf{I})}{(pq)^k} \right) \right|^2 > 1 - \varepsilon
\end{equation*}
for all $\mathbf{I} \in \Omega_{N}$.
Hence, by \eqref{1.3},
\begin{equation}\label{eq3.5}
	Q_{\mu,\Lambda}(\xi) \geq (1 - \varepsilon) \sum_{\mathbf{I} \in \Omega_{N}} | \widehat{\mu}_{k}(\xi + \gamma^{*}(\mathbf{I})) |^2
\end{equation}
for sufficiently large $k$.
By Lemmas \ref{lem2.2} and \ref{lem3.1},
\begin{equation*}
	1 = \sum_{\mathbf{I} \in \Sigma_{p}^{k}} | \widehat{\mu}_{k}(\xi + \gamma^{*}(\mathbf{I}))|^2 = \sum_{\mathbf{I} \in \Omega_{N}} | \widehat{\mu}_{k}(\xi + \gamma^{*}(\mathbf{I})) |^2 + \sum_{\mathbf{I} \in \Sigma_{p}^{k} \setminus \Omega_{N}|_{k}} | \widehat{\mu}_{k}(\xi + \gamma^{*}(\mathbf{I})) |^2
\end{equation*}
for $k \geq N$.
Evidently, $\#( \Omega_{N}|_{k} ) = \#\Omega_{N}$. Then,
\begin{equation}\label{3.9}
		Q_{\mu,\Lambda}(\xi) \geq (1 - \varepsilon)\left(1-\sum_{\mathbf{I} \in \Sigma_{p}^{k} \setminus \Omega_{N}|_{k}} | \widehat{\mu}_{k}(\xi + \gamma^{*}(\mathbf{I})) |^2\right) .
\end{equation}
For any $\mathbf{I} \in \Sigma_{p}^{k} \setminus\Omega_{N}|_{k}$, there exists a $\mathbf{J} \in \Sigma_{p}^{\infty}$ such that $\mathbf{IJ} \in \Sigma_{p}^{\gamma}$ and $\ell(\Upsilon_{\gamma}(\mathbf{IJ})) > N$.
By \eqref{eq3.34} and Lemma \ref{lem3.2},
\begin{align*}
	1-\sum_{\mathbf{I} \in \Sigma_{p}^{k} \setminus\Omega_{N}|_{k}} | \widehat{\mu}_{k}(\xi + \gamma^{*}(\mathbf{I})) |^2
	&= 1-\sum_{\mathbf{I} \in \Sigma_{p}^{k} \setminus\Omega_{N} |_{k}} | \widehat{\mu}_{k}(\xi + \gamma^{*}(\mathbf{IJ}))|^2 \\
	&\geq 1-c \sum_{\mathbf{I} \in \Sigma_{p}^{k} \setminus \Omega_{N} |_{k}}| \widehat{\mu}_{Q,D}(\xi + \gamma^{*}(\mathbf{IJ})) |^2 \\
	&\geq 1-c \sum_{\substack{\mathbf{I} \in \Sigma_{p}^{\gamma} \\ \ell(\Upsilon_{\gamma}(\mathbf{I})) > N}}| \widehat{\mu}_{Q,D}(\xi + \gamma^{*}(\mathbf{I}))|^2 > 1-c\varepsilon.
\end{align*}
Finally, combining \eqref{eq3.5} and \eqref{3.9}, for sufficiently large $k$, we find $Q_{\mu,\Lambda}(\xi) \geq (1 - \varepsilon)(1 - c\varepsilon)$.
Sending $\varepsilon \to 0$, we conclude that $Q_{\mu,\Lambda}(\xi) = 1$. By Lemma \ref{lem2.2},  $\Lambda=\gamma^{*}(\Sigma_{p}^{\gamma})$ is a spectrum of $\mu_{Q,D}$.
\end{proof}
\begin{proof}[\textbf{Proof of Corollary} \ref{col1.5}]
Note that
\begin{equation*}
	\Lambda(Q, C_{q}) := \left\{ \sum_{k=0}^{n} Q^k C_{q} : n \geq 1 \right\}
	= \sum_{k=0}^{\infty} Q^k C_{q},
\end{equation*}
where $C_{q} = A_{q} \pmod{Q}$. We then only need to set $e_{\mathbf{I}} = \bm{0}$ for all $\mathbf{I} \in \Sigma_{p}^{*}$ (as defined in Definition \ref{defn1.3}) to construct $\gamma$.
For each $\mathbf{I} \in \Sigma_{p}^{*}$, we take $\mathbf{J}_{\mathbf{I}} = 0^\infty$, which implies $\Theta_{\mathbf{I}}(\mathbf{J}_{\mathbf{I}}) = 0$.
The proof is thus complete by virtue of Theorem \ref{thm1.2}.
\end{proof}
\section{ \bf{Spectral eigenmatrices in integer matrix}}
In this section, we prove Theorem \ref{thm1.3}, which concerns integer eigenmatrices $R \in M_n(\mathbb{Z})$ of $\mu_{Q,D}$.
First, we present a simple lemma that characterizes the properties of the maximal orthogonal set of $\mu_{Q,D}$.
\begin{lemma}\label{lem4.1}
	If \(\bm{0} \in\Lambda\) is a maximal orthogonal set of \(\mu_{Q,D}\), then there exist \(\lambda_{1}, \lambda_{2}, \dots, $ $\lambda_{p-1} \in \Lambda\) such that \(\{\lambda_{1}, \lambda_{2}, \dots, \lambda_{p-1}\} \equiv \{q\bm{a}, 2q\bm{a}, \ldots, (p-1)q\bm{a}\} \pmod{Q}\).
\end{lemma}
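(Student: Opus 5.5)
The plan is to read the statement off the tree structure from Theorem \ref{thm1.1}, or equivalently to repeat the first step of its necessity proof. Since $\bm{0}\in\Lambda$ and $\Lambda$ is a maximal orthogonal set, Theorem \ref{thm1.1} gives a maximal tree map $\gamma$ with $\Lambda=\gamma^{*}(\Sigma_p^{\gamma})$. By condition (i) of Definition \ref{defn1.3}, $\gamma(i)\equiv iq\bm{a}\pmod Q$ for each $i\in\Sigma_p$.

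For each $i\in\{1,\dots,p-1\}$ I will produce an element $\lambda_i\in\Lambda$ whose first $Q$-adic digit is $\gamma(i)$. By condition (iii) applied to $\mathbf{I}=i$, there is a word $\mathbf{J}$ with $\gamma(i\mathbf{J}0^k)=\bm{0}$ for all large $k$. Hence $i\mathbf{J}0^\infty\in\Sigma_p^{\gamma}$, and we set $\lambda_i:=\gamma^{*}(i\mathbf{J}0^\infty)\in\Lambda$.

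By the definition of $\gamma^{*}$,
\[
\lambda_i=\gamma(i)+pq\sum_{k\ge2}(pq)^{k-2}\gamma\bigl((i\mathbf{J}0^\infty)|_k\bigr).
\]
The sum is finite and lies in $\mathbb{Z}^n$. Since $Q=pq I$, this gives $\lambda_i\equiv\gamma(i)\equiv iq\bm{a}\pmod Q$.

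The $\lambda_i$ are pairwise distinct, because their residues $iq\bm{a}$ differ modulo $Q$: if $iq\bm{a}\equiv jq\bm{a}\pmod{pq}$, then $(i-j)\bm{a}\in p\mathbb{Z}^n$, which forces $i=j$ since the entries of $\bm{a}$ lie in $[1,p-1]$ and $p$ is prime. Therefore $\{\lambda_1,\dots,\lambda_{p-1}\}\equiv\{q\bm{a},\dots,(p-1)q\bm{a}\}\pmod Q$, which is the claim.

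There is also a direct route that avoids the tree map. Let $C_\emptyset$ be the set of first digits $c_{n,1}$ of elements of $\Lambda$. The necessity part of the proof of Theorem \ref{thm1.1} shows that $C_\emptyset$ is a maximal orthogonal set of $\delta_{Q^{-1}D}$ in $\Gamma_q$ containing $\bm{0}$. Proposition \ref{pro2.9}(iii) with the choice $a=\bm{0}$ then gives $C_\emptyset\equiv A_q\pmod Q$.

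The one step that needs care is the maximality of $C_\emptyset$, which the proof of Theorem \ref{thm1.1} obtains from the maximality of $\Lambda$. If some $l_3\in\Gamma_q$ were orthogonal to every element of $C_\emptyset$, then $\Lambda-l_3\subset\mathcal{Z}(\widehat{\delta}_{Q^{-1}D})+pq\mathbb{Z}^n\subset\mathcal{Z}(\widehat{\mu}_{Q,D})$. One must also check that $l_3\notin\Lambda$: this holds because $l_3\notin C_\emptyset$, while every element of $\Lambda$ has first digit in $C_\emptyset$. Once $C_\emptyset\equiv A_q\pmod Q$ is known, picking for each $i$ an element $\lambda_i\in\Lambda$ whose first digit is $\equiv iq\bm{a}$ finishes the proof, since $\lambda_i\equiv c_{i,1}\pmod Q$.

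I expect no real obstacle. The only point to verify explicitly is the congruence $\lambda\equiv(\text{first digit})\pmod Q$, which holds because $Q=pqI$ is scalar.
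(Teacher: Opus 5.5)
Your main argument is the paper's proof, with the details written out. You invoke Theorem \ref{thm1.1}, use condition (iii) of Definition \ref{defn1.3} to extend each $i\in\{1,\dots,p-1\}$ to a word $i\mathbf{J}0^\infty\in\Sigma_p^{\gamma}$, and read off $\gamma^*(i\mathbf{J}0^\infty)\equiv\gamma(i)\equiv iq\bm{a}\pmod Q$; your added checks (the congruence via $Q=pqI$ and pairwise distinctness of the $\lambda_i$) are correct, and the alternative route through $C_\emptyset$ and Proposition \ref{pro2.9} is also valid, since it is just the first step of the necessity proof of Theorem \ref{thm1.1}.
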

\begin{proof}
	By Theorem \ref{thm1.1}, there exists a maximal mapping $\gamma: \Sigma_{p}^{*} \to \Gamma_{q}$ such that $\Lambda = \gamma^{*}(\Sigma_{p}^{\gamma})$.
	By Definition \ref{defn1.3}, there exist $\mathbf{I}_{1}, \mathbf{I}_{2}, \dots, \mathbf{I}_{p-1} \in \Sigma_{p}^{*}$ for which $1\mathbf{I}_{1}, 2\mathbf{I}_{2}, \dots, (p-1)\mathbf{I}_{p-1} \in \Sigma_{p}^{\gamma}$.
	Hence, $\gamma^{*}(1\mathbf{I}_{1}), \gamma^{*}(2\mathbf{I}_{2}), \dots, \gamma^{*}((p-1)\mathbf{I}_{p-1}) \in \Lambda$, and
	\begin{equation*}
		\gamma^{*}(i\mathbf{I}_{i}) \equiv \gamma(i) \equiv iq\bm{a} \pmod{Q}, \quad i = 1, 2, \dots, p-1.
	\end{equation*}
	This completes the proof.
\end{proof}
The following lemma establishes the necessity of Theorem \ref{thm1.3}.
 \begin{lemma}\label{lem4.2}
 Let $R\in M_{n}(\mathbb{Z})$ and let $\bm{0}\in \Lambda$ be a maximal orthogonal set of $\mu_{Q,D}$. Then the following holds.
 \begin{enumerate}
 	\item[\textbf{\rm(i)}] If $R\Lambda$ is a maximal orthogonal set of $\mu_{Q,D}$, then there exists $k\in\{1,2,\dots,p-1\}$ such that $R\bm{a}=k\bm{a}\pmod {p\mathbb{Z}^{n}}$;
 	\item[\textbf{\rm(ii)}] If there exists $k\in\{1,2,\dots,p-1\}$ such that $R\bm{a}=k\bm{a}\pmod {p\mathbb{Z}^{n}}$, then $R\Lambda$ is an orthogonal set of $\mu_{Q,D}.$

 \end{enumerate}
 \end{lemma}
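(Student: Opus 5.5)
Both parts rest on the explicit description of the zero set recorded in Section 2:
\[
\mathcal{Z}(\widehat{\mu}_{Q,D})=\bigcup_{k\ge1}Q^{k}\bigcup_{j=1}^{p-1}\Bigl(\tfrac{j}{p}\bm{a}+\mathbb{Z}^{n}\Bigr)=\bigcup_{k\ge1}(pq)^{k-1}\Bigl(\bigcup_{j=1}^{p-1}(jq\bm{a}+pq\mathbb{Z}^{n})\Bigr).
\]
So a nonzero integer vector $v$ lies in $\mathcal{Z}(\widehat{\mu}_{Q,D})$ if and only if $v=(pq)^{m}w$ for some $m\ge0$ and some $w\equiv jq\bm{a}\pmod{pq\mathbb{Z}^{n}}$ with $j\in\{1,\dots,p-1\}$.

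\textbf{Part (i).} By Lemma \ref{lem4.1} there are $\lambda_1,\dots,\lambda_{p-1}\in\Lambda$ with $\lambda_i\equiv iq\bm{a}\pmod{Q}$; in particular $\lambda_1=q\bm{a}+pqz$ for some $z\in\mathbb{Z}^n$. Since $\bm{0}\in\Lambda$, we have $\bm{0}=R\bm{0}\in R\Lambda$. Orthogonality of $R\Lambda$ then gives $R\lambda_1\in\mathcal{Z}(\widehat{\mu}_{Q,D})$. Write $R\lambda_1=qR\bm{a}+pqRz=(pq)^{m}w$ with $w\equiv kq\bm{a}\pmod{pq}$ and $k\in\{1,\dots,p-1\}$.

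\emph{Case $m=0$.} Dividing $qR\bm{a}+pqRz=w$ by $q$ gives $R\bm{a}\equiv k\bm{a}\pmod{p}$, which is the claim.

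\emph{Case $m\ge1$.} Reducing $qR\bm{a}+pqRz=(pq)^{m}w$ modulo $pq$ gives $qR\bm{a}\equiv 0\pmod{pq}$, hence $R\bm{a}\equiv\bm{0}\pmod p$. This must be excluded, and it is the main obstacle. Here maximality of $R\Lambda$ is used. If $R\bm{a}\equiv0\pmod p$, I expect every difference in $R\Lambda$ to collapse into the deeper levels $(pq)^{m}\cdot(\cdots)$ with $m\ge1$. Indeed, for $\lambda\ne\lambda'$ in $\Lambda$ we have $\lambda-\lambda'=(pq)^{s}(jq\bm{a}+pqy)$, so
\[
R(\lambda-\lambda')=(pq)^{s}\bigl(jqR\bm{a}+pqRy\bigr)\in(pq)^{s}\,pq\,\mathbb{Z}^{n}.
\]
Thus $R\Lambda\subset pq\mathbb{Z}^n$, and $R\Lambda$ contains no two points whose difference is $\equiv jq\bm{a}\pmod{pq}$ at level $0$. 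But a maximal orthogonal set containing $\bm{0}$ must, by Lemma \ref{lem4.1} (equivalently by level $k=0$ of the tree structure in Theorem \ref{thm1.1}), contain elements congruent to $q\bm{a}\pmod{Q}$, and none of these lie in $pq\mathbb{Z}^n$. This is a contradiction. One detail remains: $\bm{a}\not\equiv0\pmod p$, since its entries lie in $[1,p-1]$, so $q\bm{a}\notin pq\mathbb{Z}^n$. Hence $m=0$ and (i) follows.

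\textbf{Part (ii).} Take $\lambda\ne\lambda'$ in $\Lambda$. Since $\Lambda$ is orthogonal, $\lambda-\lambda'=(pq)^{s}(jq\bm{a}+pqy)$ with $1\le j\le p-1$ and $y\in\mathbb{Z}^n$. Using $R\bm{a}=k\bm{a}+pu$ with $u\in\mathbb{Z}^n$,
\[
R(\lambda-\lambda')=(pq)^{s}\bigl(jkq\bm{a}+pq(ju+Ry)\bigr).
\]
Since $p$ is prime and $1\le j,k\le p-1$, we have $jk\not\equiv0\pmod p$. Let $j'\in\{1,\dots,p-1\}$ with $j'\equiv jk\pmod p$; then $jkq\bm{a}\equiv j'q\bm{a}\pmod{pq}$. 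Therefore $R(\lambda-\lambda')\in(pq)^{s}(j'q\bm{a}+pq\mathbb{Z}^n)\subset\mathcal{Z}(\widehat{\mu}_{Q,D})$. In particular $R(\lambda-\lambda')\ne0$, so $R$ is injective on $\Lambda$, and $R\Lambda$ is an orthogonal set of $\mu_{Q,D}$.
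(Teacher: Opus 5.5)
Your proof is correct and uses the same two ingredients as the paper: Lemma \ref{lem4.1} and the explicit form $\mathcal{Z}(\widehat{\mu}_{Q,D})=\bigcup_{m\ge0}(pq)^{m}\bigcup_{j=1}^{p-1}(jq\bm{a}+pq\mathbb{Z}^{n})$.

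Part (ii) is essentially the paper's computation. You also make explicit two facts the paper uses silently: $p\nmid jk$, and that $R$ is injective on $\Lambda$.

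Part (i) runs the other way round.
\begin{itemize}
\item The paper applies Lemma \ref{lem4.1} to $R\Lambda$. This gives $\lambda\in\Lambda$ with $R\lambda\equiv q\bm{a}\pmod Q$. Then $\lambda$ cannot sit at a level $m\ge1$, since that would force $R\lambda\equiv\bm{0}\pmod Q$. So $\lambda\equiv kq\bm{a}\pmod Q$, and inverting $k$ modulo $p$ gives $R\bm{a}\equiv k_1\bm{a}\pmod{p\mathbb{Z}^n}$. No case split is needed.
\item You apply Lemma \ref{lem4.1} to $\Lambda$ and push $\lambda_1$ forward. That forces you to rule out $R\bm{a}\equiv\bm{0}\pmod p$ separately, which you do correctly by invoking Lemma \ref{lem4.1} a second time, now for $R\Lambda$.
\end{itemize}

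One small omission. The step ``orthogonality of $R\Lambda$ gives $R\lambda_1\in\mathcal{Z}(\widehat{\mu}_{Q,D})$'' needs $R\lambda_1\neq\bm{0}$. This is not automatic, since $R$ is only assumed to be an integer matrix. If $R\lambda_1=\bm{0}$, then $qR\bm{a}=-pqRz$, so $R\bm{a}\equiv\bm{0}\pmod p$, and your $m\ge1$ argument disposes of this case verbatim. You should state it.
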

 \begin{proof}(i) By Lemma \ref{lem4.1}, if $R\Lambda$ is a maximal orthogonal set of $\mu_{Q,D}$, then there exists $\lambda\in\Lambda$ such that
 	\begin{equation*}
 		R\lambda \equiv q\bm{a} \pmod{Q}.
 	\end{equation*}
 	Since $\lambda\in\Lambda\setminus\{\bm{0}\}\subset\mathcal{Z}(\widehat{\mu}_{Q,D})$ and $R\in M_{n}(\mathbb{Z})$,
 	it follows that there exists $k\in\{1,2,\dots,p-1\}$ such that
 	\[
 	\lambda \equiv kq\bm{a} \pmod{Q}.
 	\]
From the above discussion, we have
\begin{equation*}
	kqR\bm{a} \equiv R\lambda \equiv q\bm{a} \pmod{Q}.
\end{equation*}
There thus exists $k_1 \in \{1, 2, \dots, p-1\}$ such that $kk_1 \equiv 1 \pmod{p}$.
Furthermore,
\begin{equation*}
	qR\bm{a} \equiv k_1 q\bm{a} \pmod{Q}.
\end{equation*}
This implies that $R\bm{a} \equiv k_1 \bm{a} \pmod{p\mathbb{Z}^{n}}$. 	
\item [\textbf{\rm(ii)}]
Since $\Lambda$ is an orthogonal set for $\mu_{Q,D}$, for any $\lambda_1, \lambda_2 \in \Lambda$, there exists $j\in\mathbb{Z}$ such that $\lambda_1 - \lambda_2 \in \mathcal{Z}(\widehat{\delta}_{Q^{-j}D})$.
Hence,
\begin{equation*}
	R(\lambda_1 - \lambda_2) \in RQ^j \mathcal{Z}(\widehat{\delta}_D) = Q^j \bigcup_{l=1}^{p-1} \left( \frac{l}{p} R\bm{a} + R\mathbb{Z}^n \right).
\end{equation*}
By assumption, there exists $k\in\mathbb{Z}$ such that $R\bm{a} \equiv k\bm{a} \pmod{p\mathbb{Z}^n}$.
This implies that
\begin{equation*}
	R(\lambda_1 - \lambda_2) \in Q^j \bigcup_{l=1}^{p-1} \left( \frac{lk}{p}\bm{a} + \mathbb{Z}^n \right) \subset \mathcal{Z}(\widehat{\mu}_{Q,D}).
\end{equation*}
Thus, $(R\Lambda - R\Lambda) \setminus\{\bm{0}\} \subset \mathcal{Z}(\widehat{\mu}_{Q,D})$, meaning that $R\Lambda$ is an orthogonal set for $\mu_{Q,D}$.
\end{proof}
In what follows, we introduce the concept of a cut set and use it to establish the sufficiency of Theorem \ref{thm1.3}.
\begin{defn}
	Let $\mathcal{A} \subset \Sigma_{p}^{*}$. The set $\mathcal{A}$ is called a \textit{cut set} if every $\mathbf{I} \in \Sigma_{p}^{\infty}$ has a unique prefix in $\mathcal{A}$. That is, for every $\mathbf{I} \in \Sigma_{p}^{\infty}$, there exists a unique integer $k$ such that $\mathbf{I}|_{k} \in \mathcal{A}$.
\end{defn}
In the figure below, we give an intuitive example of two cut sets with $p=3$, where
$$\mathcal{A}_{\text{blue}} = \{00,01,02,10,11,12,20,21,22\} = \Sigma_{3}^{2}$$
and
$$\mathcal{A}_{red}=\{000,001,002,010,011,012,02,10,11,12,200,201,202,21,22\}.$$
\par % 换行（可选，若要图片在语句下一行）
\noindent % 取消首行缩进
\begin{minipage}{\textwidth}
	\centering
	% 第一张图
	\begin{minipage}{0.50\textwidth}
		\centering
		\includegraphics[width=\textwidth]{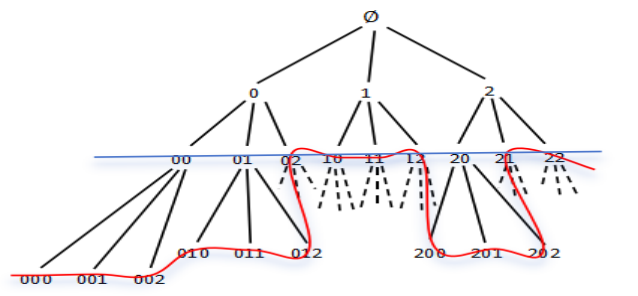}
		\footnotesize Figure 2. Cut set: Red line and blue line
		\label{tree2}
	\end{minipage}
\end{minipage}
\medspace\\
It is known that $\mathcal{A}$ is a finite set if $\mathcal{A}$ is a cut set, and $\mathcal{A}^*$ can be defined in much the same way as $\Sigma_p^*$.
 \begin{lemma}\label{lem4.3}
 	Let $\mathcal{A}$ be a cut set of $\Sigma_{p}^{*}$. Then, $\Sigma_{p}^{*}0^{\infty} = \mathcal{A}^{*}0^{\infty}$.
  \end{lemma}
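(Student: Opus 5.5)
We show the two inclusions separately. Here $\mathcal{A}^{*}$ is the set of finite concatenations $\mathbf{A}_1\mathbf{A}_2\cdots\mathbf{A}_m$ with $\mathbf{A}_j\in\mathcal{A}$, $m\ge 0$. The inclusion $\mathcal{A}^{*}0^{\infty}\subset\Sigma_p^{*}0^{\infty}$ is immediate, since every word of $\mathcal{A}^{*}$ is a finite word over $\Sigma_p$.

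For the reverse inclusion, take $\mathbf{I}=\mathbf{W}0^{\infty}$ with $\mathbf{W}\in\Sigma_p^{k}$. Greedily parse $\mathbf{I}$ into blocks from $\mathcal{A}$.
\begin{enumerate}
\item[\rm(1)] By the cut set property, $\mathbf{I}\in\Sigma_p^{\infty}$ has a unique prefix $\mathbf{A}_1\in\mathcal{A}$. Write $\mathbf{I}=\mathbf{A}_1\mathbf{I}'$ with $\mathbf{I}'\in\Sigma_p^{\infty}$.
\item[\rm(2)] Apply the same property to $\mathbf{I}'$ to get a prefix $\mathbf{A}_2\in\mathcal{A}$ of $\mathbf{I}'$, and continue.
\end{enumerate}
The blocks are nonempty, provided $\emptyset\notin\mathcal{A}$. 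If $\emptyset\in\mathcal{A}$, uniqueness of the prefix forces $\mathcal{A}=\{\emptyset\}$, and that degenerate case is excluded or handled separately. So after finitely many steps the total length $|\mathbf{A}_1|+\cdots+|\mathbf{A}_m|$ is at least $k$, and the remaining tail is $0^{\infty}$.

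The issue is that the last block $\mathbf{A}_m$ may straddle the position $k$. In that case $\mathbf{I}=\mathbf{A}_1\cdots\mathbf{A}_m0^{\infty}$, where $\mathbf{A}_m$ ends in zeros beyond position $k$. This is harmless: $\mathbf{A}_1\cdots\mathbf{A}_m\in\mathcal{A}^{*}$, and we just need the remaining infinite tail to be $0^{\infty}$, which it is once we have passed position $k$. Hence $\mathbf{I}\in\mathcal{A}^{*}0^{\infty}$.

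The only other point needing care is that the tail $0^{\infty}$ in $\mathcal{A}^{*}0^{\infty}$ is meant as the literal infinite zero sequence, not as a concatenation of blocks of $\mathcal{A}$. With that reading the parse above suffices. If instead one wants $0^{\infty}$ to be itself a concatenation of blocks, note that the cut set applied to $0^{\infty}$ gives a unique prefix $0^{r}\in\mathcal{A}$ with $r\ge1$, so $0^{\infty}=(0^{r})^{\infty}$ and the statement still holds.

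I expect the main (mild) obstacle to be finiteness of the parse, namely that each step consumes at least one letter. This rests on the nonemptiness of blocks noted above. A uniform bound on block lengths is not needed, although it follows from compactness of $\Sigma_p^{\infty}$ and is consistent with the remark that $\mathcal{A}$ is finite.
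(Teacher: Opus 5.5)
Your proof is correct and matches the paper's argument: parse $\mathbf{I}=\mathbf{W}0^{\infty}$ greedily into successive prefixes from $\mathcal{A}$, stop once the parse has passed the effective length, and observe that the remaining tail is $0^{\infty}$. The paper writes that tail as $(0^{k})^{\infty}$ using $0^{k}\in\mathcal{A}$, which is your optional remark. Your explicit exclusion of $\emptyset\in\mathcal{A}$ corresponds to the paper's implicit requirement $n_1>0$; it is genuinely needed, since for $\mathcal{A}=\{\emptyset\}$ one gets $\mathcal{A}^{*}0^{\infty}=\{0^{\infty}\}$ and the statement fails.
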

 \begin{proof}
 Since $ \mathcal{A}\subset\Sigma_{p}^{*} $, we know that $ \mathcal{A}^{*}0^{\infty}\subset\Sigma_{p}^{*}0^{\infty} .$ On the other hand, for any $\mathbf{I} \in \Sigma_{p}^{*}0^{\infty}$, there exists $n_1 > 0$ such that $\mathbf{I}|_{n_1} := \mathbf{I}_1 \in \mathcal{A}$. Then $\mathbf{I} = \mathbf{I}_1 \mathbf{I}_{n_1+1,\infty}$, where $\mathbf{I}_{n_1+1,\infty} \in \Sigma_{p}^{\infty}$. Applying the same argument to $\mathbf{I}_{n_1+1,\infty}$, there exists $n_2 > n_1$ such that $\mathbf{I}_{n_1+1,\infty} = \mathbf{I}_2 \mathbf{I}_{n_2+1,\infty}$ with $\mathbf{I}_2 \in \mathcal{A}$. Since $\mathbf{I}$ has finite effective length and $0^k \in \mathcal{A}$ for some $k > 1$, an inductive argument yields that
 \begin{equation*}
 	\mathbf{I} = \mathbf{I}_1 \mathbf{I}_2 \dots \mathbf{I}_m (0^k)^\infty \in \mathcal{A}^* 0^\infty,
 \end{equation*}
 where $\ell(\mathbf{I}_m) \neq 0$. This completes the proof.
 \end{proof}
\begin{lemma}\label{thm4.5}
	Let $\mathcal{A} \subset \Sigma_{p}^{*}$ be a cut set. Suppose a maximal map $\gamma$ satisfies
	\begin{equation}\label{4.2}
		\gamma(\mathbf{IJ}) = \gamma(\mathbf{J}) \quad \text{for all } \mathbf{I} \in \mathcal{A}, \mathbf{J} \in \Sigma_{p}^{*}.
	\end{equation}
	Then, $\Sigma_{p}^{\gamma} = \mathcal{A}^{*}0^{\infty}$ and $\Lambda = \gamma^{*}(\Sigma_{p}^{\gamma})$ is a spectrum of $\mu_{Q,D}$.
\end{lemma}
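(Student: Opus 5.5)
The proof has two parts: first identify $\Sigma_p^\gamma$ with $\mathcal{A}^*0^\infty$, then prove spectrality by verifying the hypothesis of Theorem \ref{thm1.2} with a uniform bound coming from the finiteness of $\mathcal{A}$.

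\textbf{Step 1: $\Sigma_p^\gamma\subset \mathcal{A}^*0^\infty$.} This is immediate, since $\Sigma_p^\gamma\subset\Sigma_p^*0^\infty=\mathcal{A}^*0^\infty$ by Lemma \ref{lem4.3}.

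\textbf{Step 2: the reverse inclusion.} Take $\mathbf{I}=\mathbf{I}_1\cdots\mathbf{I}_m0^\infty$ with $\mathbf{I}_j\in\mathcal{A}$. Iterating \eqref{4.2} gives
\[
\gamma(\mathbf{I}_1\cdots\mathbf{I}_m\mathbf{J})=\gamma(\mathbf{J})
\]
for every $\mathbf{J}$. Applying this with $\mathbf{J}=0^k$, it suffices to show $\gamma(0^k)=\bm{0}$ for all $k$. This follows from Definition \ref{defn1.3}(i) with $i_{k+1}=0$, which gives $\gamma(0^{k}0)=0\cdot q\bm{a}=\bm{0}$. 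Hence $\gamma(\mathbf{I}|_k)=\bm{0}$ for all large $k$, so $\mathbf{I}\in\Sigma_p^\gamma$. Thus $\Sigma_p^\gamma=\mathcal{A}^*0^\infty$.

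\textbf{Step 3: setup for spectrality.} By Theorem \ref{thm1.1}, $\Lambda$ is a maximal orthogonal set. I will apply Theorem \ref{thm1.2}, so I need, for each $\mathbf{I}\in\Sigma_p^*$, a tail $\mathbf{J}_\mathbf{I}$ with $\mathbf{I}\mathbf{J}_\mathbf{I}\in\Sigma_p^\gamma$ and $\Theta_\mathbf{I}(\mathbf{J}_\mathbf{I})$ bounded uniformly in $\mathbf{I}$. Let $L=\max\{|\mathbf{A}|:\mathbf{A}\in\mathcal{A}\}$, which is finite since a cut set is finite.

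\textbf{Step 4: choice of tail.} Given $\mathbf{I}$, extend it by zeros until the first point where the word lies in $\mathcal{A}^*$. Write $\mathbf{I}=\mathbf{I}_1\cdots\mathbf{I}_m\mathbf{I}'$ with $\mathbf{I}_j\in\mathcal{A}$ and $\mathbf{I}'$ a proper prefix of some element of $\mathcal{A}$, so $|\mathbf{I}'|<L$. Then $\mathbf{I}'0^\infty$ has a unique prefix $\mathbf{I}'0^s\in\mathcal{A}$ with $s\le L$. Take $\mathbf{J}_\mathbf{I}=0^\infty$. Then $\mathbf{I}0^\infty=\mathbf{I}_1\cdots\mathbf{I}_m(\mathbf{I}'0^s)0^\infty\in\mathcal{A}^*0^\infty=\Sigma_p^\gamma$.

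\textbf{Step 5: the uniform bound.} For $t>s$,
\[
\gamma(\mathbf{I}0^t)=\gamma(0^{t-s})=\bm{0}.
\]
So the digits of $\Upsilon_\gamma(\mathbf{I}0^\infty)$ after position $|\mathbf{I}|$ vanish beyond at most $s$ further places. The only partition blocks in \eqref{eq2.4} after $\Upsilon_\gamma(\mathbf{I})$ therefore sit inside a window of length at most $s\le L$. Hence $\Theta_\mathbf{I}(0^\infty)\le L$, or at most a fixed multiple of $L$ after accounting for how the blocks $n_j$ are placed.

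\textbf{Main obstacle.} The delicate point is the bookkeeping for $\Theta$: the breakpoints $n_j$ are positions where the last letter is $0$ but $\gamma\ne\bm{0}$, and each block contributes its effective length. I expect to check that all nonzero values occur at indices in $(|\mathbf{I}|,|\mathbf{I}|+s]$, so the sum of effective lengths is bounded by $s\,(\le L)$ plus the number of blocks, itself at most $s+1$. This gives $\sup_\mathbf{I}\Theta_\mathbf{I}(\mathbf{J}_\mathbf{I})\le 2L+1<\infty$, and Theorem \ref{thm1.2} then yields that $\Lambda$ is a spectrum.
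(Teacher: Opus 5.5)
Your proposal is correct and follows the paper's proof essentially step for step. Lemma \ref{lem4.3}, together with iterating \eqref{4.2} and using $\gamma(0^k)=\bm{0}$ from Definition \ref{defn1.3}(i), gives $\Sigma_p^\gamma=\mathcal{A}^*0^\infty$; then taking $\mathbf{J}_{\mathbf{I}}=0^\infty$, with $\Theta_{\mathbf{I}}(0^\infty)$ bounded by the maximal word length of the finite cut set, lets you invoke Theorem \ref{thm1.2}. Your hedged bound $2L+1$ is valid but loose: the blocks are disjoint and every nonzero letter after $\Upsilon_\gamma(\mathbf{I})$ lies in the window of length $s$, so in fact $\Theta_{\mathbf{I}}(0^\infty)\le s\le L$, which matches the paper's bound.
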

 \begin{proof}
From the definition of $\Sigma_{p}^{\gamma}$ and Lemma \ref{lem4.3}, it follows that
 	\begin{equation*}
 		\Sigma_{p}^{\gamma} \subset \Sigma_{p}^{*}0^{\infty} = \mathcal{A}^{*}0^{\infty}.
 	\end{equation*}
 	On the other hand, \eqref{4.2} implies that
 	\begin{equation*}
 		\gamma(\mathbf{IJ}) = \gamma(\mathbf{J}) \quad \text{for all } \mathbf{I} \in \mathcal{A}^{*}, \mathbf{J} \in \Sigma_{p}^{*}.
 	\end{equation*}
Then, for any word $\mathbf{I} \in \mathcal{A}^{*}$ of length $l_1$ and for any $l_1 < l_2$,
 	\begin{equation*}
 		\gamma(\mathbf{I}0^{\infty}|_{l_2}) = \gamma(\mathbf{I}0^{l_2-l_1}) = \gamma(0^{l_2-l_1}) = \mathbf{0}.
 	\end{equation*}
 	Thus, $\mathcal{A}^{*}0^{\infty} \subset \Sigma_{p}^{\gamma}$, and hence $\mathcal{A}^{*}0^{\infty} = \Sigma_{p}^{\gamma}$. Next, we use Theorem \ref{thm1.2} to show that $\Lambda = \gamma^{*}(\Sigma_{p}^{\gamma})$ is a spectrum of $\mu_{Q,D}$. Denote $N := \max_{\mathbf{I} \in \mathcal{A}} \ell(\mathbf{I})$.
 	For any $\mathbf{I} \in \Sigma_{p}^{*}$, since $\mathbf{I}0^\infty \in \Sigma_{p}^{*}0^\infty = \mathcal{A}^{*}0^\infty$, we can find an integer $0 \le k < N$ such that $\mathbf{I}0^k \in \mathcal{A}^{*}$. Then, \eqref{4.2} implies that $\gamma(\mathbf{I}0^\infty|_{j}) = \bm{0}$ for all sufficiently large integers $j$. Then, setting $\mathbf{J}_{\mathbf{I}} = 0^\infty$, we can deduce that $\Theta_{\mathbf{I}}(\mathbf{J}_{\mathbf{I}}) \le k < N$, and by Theorem \ref{thm1.2}, it follows that $\Lambda = \gamma^{*}(\Sigma_{p}^{\gamma})$ is a spectrum of $\mu_{Q,D}$.
\end{proof}
In order to prove the sufficiency of Theorem \ref{thm1.3}, we choose a suitable cut set $\mathcal{A} = \Sigma_{p}^{k}$. If a maximal mapping $\gamma$ satisfies
\begin{equation*}
	\gamma(\mathbf{IJ}) = \gamma(\mathbf{J}) \quad \text{for all } \mathbf{I} \in \Sigma_{p}^{k}, \mathbf{J} \in \Sigma_{p}^{*},
\end{equation*}
then from Lemma \ref{thm4.5}, it follows that $\Sigma_{p}^{\gamma} = (\Sigma_{p}^{k})^{*}0^\infty = \Sigma_{p}^{k}\Sigma_{p}^{\gamma}$. Furthermore, we have
\begin{align*}
	\gamma^{*}(\Sigma_{p}^{\gamma}) &= \gamma^{*}(\Sigma_{p}^{k}) + Q^{k}\gamma^{*}(\Sigma_{p}^{\gamma}) = \dots = \gamma^{*}(\Sigma_{p}^{k}) + Q^{k}\gamma^{*}(\Sigma_{p}^{\gamma}) + \dots + Q^{nk}\gamma^{*}(\Sigma_{p}^{\gamma}) + \dots \\
	&= \Lambda(Q^{k}, \gamma^{*}(\Sigma_{p}^{k})).
\end{align*}
Let
\begin{equation*}
	D_{k} = D + QD + \dots + Q^{k-1}D.
\end{equation*}
It follows from \eqref{eq1.2} that
\begin{equation*}
	\mu_{Q,D} = \delta_{Q^{-k}D_{k}} * \delta_{Q^{-2k}D_{k}} * \dots = \mu_{Q^{k},D_{k}}.
\end{equation*}
From Lemma \ref{lem3.1}, it is easy to verify that $\gamma^{*}(\Sigma_{p}^{k})$ is a spectrum of $\delta_{Q^{-k}D_{k}}$. This further implies that the triple $(Q^{k}, D_{k}, \gamma^{*}(\Sigma_{p}^{k}))$ is a Hadamard triple.
 \begin{lemma}\label{lem4.6}
 	Assume there exists $l \in \{1, 2, \dots, p-1\}$ such that $R\bm{a} = l\bm{a} \pmod{p\mathbb{Z}^n}$. Let $\gamma$ be a maximal mapping satisfying
 	\begin{equation}\label{eq4.2}
 		\gamma(\mathbf{IJ}) = \gamma(\mathbf{J}) \quad \text{for all } \mathbf{I} \in \Sigma_{p}^{k}, \mathbf{J} \in \Sigma_{p}^{*}.
 	\end{equation}
 	Then, $(Q^k, D_k, R\gamma^{*}(\Sigma_{p}^{k}))$ is a Hadamard triple.
\end{lemma}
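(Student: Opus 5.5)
We need to show: for distinct $\mathbf{I},\mathbf{J}\in\Sigma_p^k$, $R(\gamma^*(\mathbf{I})-\gamma^*(\mathbf{J}))\in\mathcal{Z}(\widehat{\delta}_{Q^{-k}D_k})$, and that the $p^k$ points $R\gamma^*(\Sigma_p^k)$ are distinct. Then Lemma \ref{lem2.3}(i) gives the Hadamard triple property, since $\#D_k=p^k$. We do not need \eqref{eq4.2} for the zero condition itself. It only serves to identify $\gamma^*(\Sigma_p^k)$ as the first-level digit set of the Hadamard triple $(Q^k,D_k,\gamma^*(\Sigma_p^k))$ noted above.

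\textbf{Step 1: zero set of the finite convolution.} Since $\delta_{Q^{-k}D_k}=\delta_{Q^{-1}D}*\cdots*\delta_{Q^{-k}D}$, we have
\[
\mathcal{Z}(\widehat{\delta}_{Q^{-k}D_k})=\bigcup_{j=1}^{k}Q^{j}\mathcal{Z}(\widehat{\delta}_D)=\bigcup_{j=0}^{k-1}(pq)^{j}\bigcup_{l=1}^{p-1}\Bigl(lq\bm{a}+pq\mathbb{Z}^n\Bigr).
\]
Here we use $Q\bigl(\tfrac{l}{p}\bm{a}+\mathbb{Z}^n\bigr)=lq\bm{a}+pq\mathbb{Z}^n$.

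\textbf{Step 2: the difference $\gamma^*(\mathbf{I})-\gamma^*(\mathbf{J})$.} Let $s\le k$ be minimal with $\mathbf{I}|_s\neq\mathbf{J}|_s$, and write $\mathbf{I}|_s=\mathbf{K}i$ and $\mathbf{J}|_s=\mathbf{K}j$ with $i\neq j$. By Definition \ref{defn1.3}(ii),
\[
\gamma(\mathbf{K}i)-\gamma(\mathbf{K}j)\equiv(i-j)q\bm{a}\pmod{Q}.
\]
Therefore $\gamma^*(\mathbf{I})-\gamma^*(\mathbf{J})=(pq)^{s-1}\bigl((i-j)q\bm{a}+pq z\bigr)$ for some $z\in\mathbb{Z}^n$, exactly as in the proofs of Theorem \ref{thm1.1} and Lemma \ref{lem3.1}.

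\textbf{Step 3: apply $R$.} Applying $R$ and using $R\bm{a}=l\bm{a}+p\bm{w}$ with $\bm{w}\in\mathbb{Z}^n$, we get
\[
R\bigl(\gamma^*(\mathbf{I})-\gamma^*(\mathbf{J})\bigr)=(pq)^{s-1}\bigl((i-j)lq\bm{a}+pq(\,(i-j)\bm{w}+Rz\,)\bigr).
\]
Since $p$ is prime and $p\nmid (i-j)l$, the residue $(i-j)l \bmod p$ lies in $\{1,\dots,p-1\}$, and the discrepancy is absorbed into $pq\mathbb{Z}^n$. Hence the right-hand side lies in $(pq)^{s-1}\bigl(l'q\bm{a}+pq\mathbb{Z}^n\bigr)$ with $l'\in\{1,\dots,p-1\}$ and $s-1\le k-1$. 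By Step 1 it lies in $\mathcal{Z}(\widehat{\delta}_{Q^{-k}D_k})$.

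\textbf{Step 4: distinctness.} Every entry of $\bm{a}$ lies in $[1,p-1]$, so $\bm{a}\not\equiv\bm{0}\pmod p$. Hence $l'q\bm{a}\notin pq\mathbb{Z}^n$, and the difference above is nonzero. So the $p^k$ images are distinct, and Lemma \ref{lem2.3}(i) finishes the proof.

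The only delicate point is the bookkeeping in Step 3, where $R$ acts on the $pq\mathbb{Z}^n$ remainder; this is harmless because $R$ is an integer matrix, so $R(pq\mathbb{Z}^n)\subset pq\mathbb{Z}^n$. No step is a real obstacle: the argument is essentially Lemma \ref{lem4.2}(ii) restricted to the finite level $k$.
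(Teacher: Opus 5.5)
Your proof is correct and takes the same route as the paper. Like the paper, you take the first index $s$ where $\mathbf{I}$ and $\mathbf{J}$ differ, write $\gamma^{*}(\mathbf{I})-\gamma^{*}(\mathbf{J})=(pq)^{s-1}\bigl((i-j)q\bm{a}+pq z\bigr)$, and use $R\bm{a}\equiv l\bm{a}\pmod{p\mathbb{Z}^n}$ to keep the image in $\mathcal{Z}(\widehat{\delta}_{Q^{-k}D_k})$. The one difference is in the cardinality count: the paper appeals to invertibility of $R$, which the congruence hypothesis does not guarantee for a general integer $R$, whereas your Step 4 gets distinctness of the $p^k$ points directly from the image differences lying in the zero set, hence being nonzero, which is the sounder justification.
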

\begin{proof}
For any $\mathbf{I}=i_{1}i_{2}\dots i_{k}\in\Sigma_{p}^{k},$
\begin{equation*}
	\gamma^{*}(\mathbf{I})=\sum_{j=1}^{k}(pq)^{j-1}\gamma(\mathbf{I}|_{j})=\sum_{j=1}^{k}(pq)^{j-1}(e_{\mathbf{I}|_{j-1}}+i_{j}q\bm{a}\pmod Q).
\end{equation*}
Since $\gamma^{*}(\Sigma_{p}^{k})$ is a spectrum of $\delta_{Q^{-k}D_{k}}$, for any two distinct $\mathbf{I}, \mathbf{J} \in \Sigma_{p}^{k}$, let $1 \le t \le k$ be the smallest index such that $\mathbf{I}|_{t} \neq \mathbf{J}|_{t}$. Then
\begin{align*}
	\gamma^{*}(\mathbf{I}) - \gamma^{*}(\mathbf{J}) &= (pq)^{t-1}(i_t - j_t)q\bm{a} \pmod{Q} + \sum_{m=t+1}^{\infty} (pq)^{m-1}(\gamma(\mathbf{I}|_{m}) - \gamma(\mathbf{J}|_{m})) \\
	&:= (pq)^{t-1}\omega \in \mathcal{Z}(\widehat{\delta}_{Q^{-k}D_{k}}) = \bigcup_{j=1}^{k} (pq)^{j-1} \bigcup_{j=1}^{p-1} (jq\bm{a} + pq\mathbb{Z}^n),
\end{align*}
where $\omega \in \cup_{j=1}^{p-1} (jq\bm{a} + pq\mathbb{Z}^n)$. Therefore, by the hypothesis of the lemma,
\begin{equation*}
	R\gamma^{*}(\mathbf{I}) - R\gamma^{*}(\mathbf{J}) = (pq)^{t-1}R\omega \in \mathcal{Z}(\widehat{\delta}_{Q^{-k}D_{k}}).
\end{equation*}
This establishes that $R\gamma^{*}(\Sigma_{p}^{k})$ is an orthogonal set with respect to $\delta_{Q^{-k}D_{k}}$. Since $R$ is an invertible matrix, the cardinality of $R\gamma^{*}(\Sigma_{p}^{k})$ is $p^k$, which coincides with the dimension of $L^2(\delta_{Q^{-k}D_{k}})$. This confirms that $R\gamma^{*}(\Sigma_{p}^{k})$ is a spectrum of $\delta_{Q^{-k}D_{k}}$, and thus the triple $(Q^k, D_k, R\gamma^{*}(\Sigma_{p}^{k}))$ is a Hadamard triple.
\end{proof}
Combining Lemma \ref{lem2.4} and Lemma \ref{lem4.6}, we can directly derive the following lemma.
\begin{lemma}\label{lem4.7}
Assume there exists $l \in \{1, 2, \dots, p-1\}$ such that $R\bm{a} = l\bm{a} \pmod{p\mathbb{Z}^n}$. Let $\gamma$ be a maximal mapping satisfying
\begin{equation*}
	\gamma(\mathbf{IJ}) = \gamma(\mathbf{J}) \quad \text{for all } \mathbf{I} \in \Sigma_{p}^{k}, \mathbf{J} \in \Sigma_{p}^{*}.
\end{equation*}
Then, $R\Lambda(Q^k, \gamma^{*}(\Sigma_{p}^{k}))$ fails to be a spectrum of $\mu_{Q,D}$ if and only if there exists a nonzero $D_k$-cycle $\{x_0, x_1, \dots, x_{s-1}\}$ contained in $T(Q^k, R\gamma^{*}(\Sigma_{p}^{k})).$
\end{lemma}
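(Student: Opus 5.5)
The plan is to read Lemma \ref{lem4.7} as a direct application of Lemma \ref{lem2.4}. We apply that lemma to the self-affine measure $\mu_{Q^k,D_k}=\mu_{Q,D}$, with expanding matrix $Q^k$, digit set $D_k$ and candidate spectrum built from $L:=R\gamma^{*}(\Sigma_p^k)$. The first step is to check its hypotheses.

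By Lemma \ref{lem4.6}, $(Q^k,D_k,L)$ is a Hadamard triple. We also need $\bm{0}\in D_k\cap L$. We have $\bm{0}\in D$, hence $\bm{0}\in D_k$. Moreover $\gamma^{*}(0^k)=\sum_{j=1}^k (pq)^{j-1}\gamma(0^j)=\bm{0}$ by Definition \ref{defn1.3}(i), so $\bm{0}\in L$.

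Next, identify the set in question. Since $Q^{k}=(pq)^kI$ is scalar, it commutes with $R$. Therefore
\[
R\Lambda(Q^k,\gamma^{*}(\Sigma_p^k))=\Big\{\sum_{j=1}^{m}Q^{k(j-1)}R\gamma^{*}(\Sigma_p^k):m\in\mathbb{N}\Big\}=\Lambda(Q^k,L).
\]
This is exactly the set appearing in Lemma \ref{lem2.4}, with $Q$ replaced by $Q^k$ and $D$ by $D_k$.

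Lemma \ref{lem2.4} then says that $\Lambda(Q^k,L)$ is a spectrum of $\mu_{Q^k,D_k}$ if and only if $\{\bm{0}\}$ is the unique $D_k$-cycle. The statement of the present lemma takes the cycle in $T(Q^k,R\gamma^{*}(\Sigma_p^k))$, i.e. the cycle is generated by the maps $x\mapsto Q^{-k}(x+\ell)$, $\ell\in L$, with the condition $|\widehat{\delta}_{D_k}(x_i)|=1$. This is the Dutkay–Jorgensen formulation, in which the roles of $D$ and $L$ in the cycle definition are such that the cycle lives in the attractor built from the spectral candidate set. So I read Lemma \ref{lem2.4} in that (standard) form.

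Taking the negation of the equivalence gives the claim: $R\Lambda(Q^k,\gamma^{*}(\Sigma_p^k))$ fails to be a spectrum exactly when there is a $D_k$-cycle $\{x_0,\dots,x_{s-1}\}\neq\{\bm{0}\}$ in $T(Q^k,L)$. Such a cycle is what the statement calls a nonzero $D_k$-cycle. Indeed, if one point of a cycle is $\bm{0}$ and the cycle is not $\{\bm{0}\}$, it still counts as a cycle different from the trivial one. So "nonzero" should be interpreted as "not the trivial cycle $\{\bm{0}\}$".

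The only real obstacle is bookkeeping about which set the cycle lives in, together with the transpose convention in Lemma \ref{lem2.4}. Here $Q^k$ is a scalar multiple of the identity, so $(Q^k)^t=Q^k$ and the orientation issue disappears. I would state this explicitly and then conclude.
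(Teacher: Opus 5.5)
Your proposal is correct and follows the paper's own argument: the paper obtains Lemma \ref{lem4.7} directly by combining Lemma \ref{lem2.4}, applied to $\mu_{Q^k,D_k}=\mu_{Q,D}$, with the Hadamard triple $(Q^k,D_k,R\gamma^{*}(\Sigma_p^k))$ from Lemma \ref{lem4.6}. Your extra checks are the routine details the paper leaves implicit: that $\bm{0}\in D_k\cap R\gamma^{*}(\Sigma_p^k)$, that $R\Lambda(Q^k,\gamma^{*}(\Sigma_p^k))=\Lambda(Q^k,R\gamma^{*}(\Sigma_p^k))$ because $Q^k=(pq)^kI$ commutes with $R$, and that the cycle condition is read in the attractor generated by $R\gamma^{*}(\Sigma_p^k)$, as the statement intends.
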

Next, we construct a mapping satisfying \eqref{eq4.2} to apply the aforementioned two lemmas, and we immediately show that it is a maximal mapping.
\begin{defn}\label{defn4.8}
	Let $q, k \ge 2$. Define $\gamma_k: \Sigma_{p}^{*} \to \Gamma_q$ to be a mapping satisfying the following:
	\begin{enumerate}
		\item[\rm(i)] $\gamma_k(i_1i_2\dots i_j) = i_j q\bm{a} \pmod{Q}$ for $1 \le j \le k-1$;
		\item[\rm(ii)] $\gamma_k(0^{k-1}i_k) = i_k q\bm{a} \pmod{Q}$, and $\gamma_k(i_1i_2\dots i_k) = e_{\bm{a}} + i_k q\bm{a} \pmod{Q}$ if $i_1i_2\dots i_{k-1} \neq 0^{k-1}$, where $e_{\bm{a}} = j\bm{a} \pmod{p\mathbb{Z}^n} \in B_q$ for some $j \in \{1, 2, \dots, p-1\}$;
		\item[\rm(iii)] $\gamma_k(\mathbf{IJ}) = \gamma_k(\mathbf{J})$ for all $\mathbf{I} \in \Sigma_{p}^{k}$, $\mathbf{J} \in \Sigma_{p}^{*}$.
	\end{enumerate}
\end{defn}

\begin{pro}\label{pro4.9}
	Let $k \ge 2$ and let $\gamma_k$ be the mapping given by Definition \ref{defn4.8}. Then, $\gamma_k$ is a maximal mapping, and for any $i_1i_2\dots i_k \neq 0^k$, the sequence $\gamma_k(i_1)\gamma_k(i_1i_2)\dots\gamma_k(i_1i_2\dots i_k)$ is not a repetition of any word in $\Gamma_q^{*}$; that is, there exists no $\eta \in \Gamma_q^{*}$ with $\eta \neq 0^k$ and no integer $l > 1$ such that
	\begin{equation*}
		\gamma_k(i_1)\gamma_k(i_1i_2)\dots\gamma_k(i_1i_2\dots i_k) = \eta^l.
	\end{equation*}
\end{pro}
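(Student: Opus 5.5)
We prove the two assertions of Proposition \ref{pro4.9} separately. First we check that $\gamma_k$ satisfies conditions (i)--(iii) of Definition \ref{defn1.3}. Then we analyse the first $k$ symbols $\gamma_k(i_1)\dots\gamma_k(i_1\dots i_k)$ directly to exclude a periodic structure.

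\emph{Maximality.} We verify the three conditions in turn.
\begin{enumerate}
\item[\rm(a)] For Condition (i), note that $\gamma_k(0^{j}i_{j+1})$ is determined by rule (i) or the first clause of rule (ii) of Definition \ref{defn4.8} when $j+1\le k$. When $j+1>k$, rule (iii) strips off the prefix $0^k$ repeatedly and reduces to the previous case. In both cases we obtain $i_{j+1}q\bm{a}\pmod Q$.
\item[\rm(b)] For Condition (ii), we check that every child set $\{\gamma_k(\mathbf{I}i):i\in\Sigma_p\}$ has the form $e_{\mathbf{I}}+iq\bm{a}\pmod Q$ with $e_{\mathbf{I}}\in B_q$. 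Here $e_{\mathbf{I}}$ is either $\bm{0}$ or $e_{\bm{a}}$, and both lie in $B_q$ by construction. By rule (iii), the value depends only on the length of $\mathbf{I}i$ modulo $k$ and on the last block, so this reduces to the cases in (i),(ii).
\item[\rm(c)] For Condition (iii), given $\mathbf{I}$ we pad it with zeros to a length divisible by $k$; call the result $\mathbf{I}'$, so $\mathbf{I}'\in(\Sigma_p^k)^*$. Rule (iii) then gives $\gamma_k(\mathbf{I}'0^m)=\gamma_k(0^m)=\bm{0}$. We should also confirm that $e_{\bm{a}}$ exists. Since $\bm{a}$ has coordinates in $[1,p-1]$, $\bm{a}\not\equiv\bm{0}\pmod p$, so some representative of $j\bm{a}$ modulo $Q$ lies in $B_q$. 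This follows from Proposition \ref{pro2.10} applied to $\mathcal{A}_{\bm{0}}$, which yields a $B_q$-element congruent to $j q\bm{a}$ type elements. The fact that $q\ge2$ leaves room in the first coordinate range.
\end{enumerate}

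\emph{Non-repetition.} Fix $\mathbf{I}=i_1\dots i_k\neq0^k$ and write $w=w_1\dots w_k$ with $w_j=\gamma_k(i_1\dots i_j)$. Suppose $w=\eta^l$ with $l>1$. Then $|\eta|=k/l<k$, so $w_k=w_{k-|\eta|}$, and more generally $w_k=w_{j}$ for some $j\le k-1$. We split into two cases according to whether $i_1\dots i_{k-1}=0^{k-1}$.
\begin{enumerate}
\item[\rm(a)] If $i_1\dots i_{k-1}=0^{k-1}$, then $i_k\ne0$, $w_j=\bm{0}$ for $j<k$, and $w_k=i_kq\bm{a}\ne\bm{0}$. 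This contradicts $w_k=w_j=\bm{0}$.
\item[\rm(b)] Otherwise $w_k=e_{\bm{a}}+i_kq\bm{a}\pmod Q$, while every earlier entry has the form $i_jq\bm{a}\pmod Q$, which lies in $\mathcal{A}_{\bm{0}}$. Since $e_{\bm{a}}\in B_q\setminus\{\bm{0}\}$, the decomposition in Proposition \ref{pro2.11} places $w_k$ in $\mathcal{A}_{e_{\bm{a}}}$. The sets $\mathcal{A}_{e_{\bm{a}}}$ and $\mathcal{A}_{\bm{0}}$ are disjoint, so $w_k\neq w_j$, again a contradiction.
\end{enumerate}
The condition $\eta\ne0^k$ is automatic because $w\ne0^k$.

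The main obstacle is the careful bookkeeping of rule (iii) in the maximality check, that is, showing that $\gamma_k$ is well defined and consistent on words of arbitrary length. We also need $e_{\bm{a}}\neq\bm{0}$ and $e_{\bm{a}}\in B_q$, which is where the hypothesis $q\ge2$ must enter. To settle this we would first compute an explicit $e_{\bm{a}}$ as a reduction of $j\bm{a}$ modulo $Q$ into $B_q$.
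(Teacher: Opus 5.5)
Your proof is correct and follows the paper's route. Maximality is read off directly from Definition \ref{defn4.8}, and non-repetition is reduced to comparing the last letter $\gamma_k(i_1\dots i_k)$ with an earlier letter, split according to whether $i_1\dots i_{k-1}=0^{k-1}$. Your appeal to the disjointness of $\mathcal{A}_{\bm{0}}$ and $\mathcal{A}_{e_{\bm{a}}}$ in Proposition \ref{pro2.11} simply makes explicit the paper's terse ``contradiction as $q\ge 2$''.

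One aside should be fixed, though the proof does not need it, since the existence of $e_{\bm{a}}$ is stipulated in Definition \ref{defn4.8}. The congruence defining $e_{\bm{a}}$ is modulo $p\mathbb{Z}^n$, not modulo $Q$. For $p=3$, $q=2$, $\bm{a}=(1,1)^t$, no representative of $j\bm{a}$ modulo $Q$ lies in $B_q$, and Proposition \ref{pro2.10} applied to $\mathcal{A}_{\bm{0}}$ only returns $\bm{0}$. The correct reason is that $q\ge 2$ puts $-1$ in the first-coordinate range of $B_q$, so you can take $j$ with $j a_1\equiv -1\pmod p$, where $a_1$ is the first coordinate of $\bm{a}$.
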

\begin{proof}
	By the definition of a maximal mapping, $\gamma_k$ in Definition \ref{defn4.8} is easily seen to be a maximal mapping.  For the second conclusion, if there exist $\eta_0 \in \Gamma_q^{*}$ and an integer $l_0 > 1$ such that
	\begin{equation*}
		\gamma_k(i_1) \gamma_k(i_1i_2) \dots \gamma_k(i_1i_2\dots i_k) = \eta_0^{l_0},
	\end{equation*}
	then there exists an integer $k_0$ such that $k = k_0 l_0$ and
	\begin{equation*}
		\gamma_k(i_1) \gamma_k(i_1i_2) \dots \gamma_k(i_1i_2\dots i_{k_0}) = \eta_0.
	\end{equation*}
	Therefore,
	\begin{equation*}
		\gamma_k(i_1i_2\dots i_{k_0}) = \gamma_k(i_1i_2\dots i_{k_0 l_0}) = \gamma_k(i_1i_2\dots i_k).
	\end{equation*}	
If $i_1i_2\dots i_{k-1} = 0^{k-1}$, then $\gamma_k(i_1i_2\dots i_k) = i_k q\bm{a} \pmod{Q} = \gamma_k(0^{k_0}) = \mathbf{0}$. For the remaining case, $i_{k_0} q\bm{a} = e_{\bm{a}} + i_k q\bm{a} \pmod{Q}$, which is a contradiction as $q \ge 2$.
\end{proof}
\begin{thm}\label{thm4.10}
	Let $\gamma_k$ be the mapping given by Definition \ref{defn4.8}. Assume there exists $l \in \{1, 2, \dots, p-1\}$ such that $R\bm{a} = l\bm{a} \pmod{p\mathbb{Z}^n}$. Then, for sufficiently large $k$, $R\Lambda(Q^k, \gamma_k^{*}(\Sigma_{p}^{k}))$ and $\Lambda(Q^k, \gamma_k^{*}(\Sigma_{p}^{k}))$ are spectra of $\mu_{Q,D}$.
\end{thm}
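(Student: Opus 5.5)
The plan is to prove the two claims separately, the second being the harder one.

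\emph{The set $\Lambda_k:=\Lambda(Q^k,\gamma_k^{*}(\Sigma_p^k))$ is a spectrum.} By Proposition \ref{pro4.9}, $\gamma_k$ is a maximal mapping. By construction (Definition \ref{defn4.8}(iii)) it satisfies \eqref{eq4.2} with cut set $\mathcal{A}=\Sigma_p^k$. Lemma \ref{thm4.5} then gives directly that $\gamma_k^{*}(\Sigma_p^{\gamma_k})$ is a spectrum of $\mu_{Q,D}$. The discussion preceding Lemma \ref{lem4.6} identifies this set with $\Lambda_k$. This part requires no restriction on $k$.

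\emph{The set $R\Lambda_k$ is a spectrum.} By Lemma \ref{lem4.6}, $(Q^k,D_k,R\gamma_k^{*}(\Sigma_p^k))$ is a Hadamard triple. Using $\mu_{Q,D}=\mu_{Q^k,D_k}$ and Lemma \ref{lem4.7}, it suffices to exclude a nonzero $D_k$-cycle in $T(Q^k,R\gamma_k^{*}(\Sigma_p^k))$.
\begin{enumerate}
\item[(a)] Suppose $\{x_0,\dots,x_{s-1}\}$ is such a cycle. Each $x_i$ is rational: it is the fixed point of an affine contraction with rational data. Each $x_i$ also satisfies $|\widehat{\delta}_{D_k}(x_i)|=1$.
\item[(b)] Factor $\widehat{\delta}_{D_k}(x)=\prod_{j=0}^{k-1}\widehat{\delta}_D(Q^{j}x)$. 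Condition \eqref{1.3} implies that $\delta_D$ is a spectral measure on $p$ points with spectrum $\{j\bm a/p\}$. From this I expect $|\widehat{\delta}_D(y)|=1$ iff $y\in\mathbb{Z}^n$, possibly up to a lattice related to $D$; I would verify this using $\mathbf 0\in D$ and that $D-D$ generates enough of $\mathbb{Z}^n$. Hence $x_i\in\mathbb{Z}^n$.
\item[(c)] Then $x_{i+1}=Q^{-k}(x_i+R\gamma_k^{*}(\mathbf I_i))$ with $x_i\in\mathbb{Z}^n$. Consequently $x_i\equiv -R\gamma_k^{*}(\mathbf I_i)\pmod{Q^k}$. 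Since $T(Q^k,R\gamma_k^{*}(\Sigma_p^k))$ is bounded by roughly $\|R\|\cdot\sup|\Gamma_q|$, the $x_i$ lie in a fixed finite set of integer points independent of $k$.
\item[(d)] Write the $Q$-adic expansions. The relation $x_i+R\gamma_k^{*}(\mathbf I_i)=Q^kx_{i+1}$ says that the first $k$ base-$Q$ digits of $-x_i$ coincide with those of $R\gamma_k^{*}(\mathbf I_i)$. Because $x_i$ is bounded independently of $k$, its expansion is eventually constant. For negative entries it is eventually a fixed periodic "$-1$"-type tail, with period length bounded independently of $k$.
\item[(e)] On the other hand, $\gamma_k^{*}(\mathbf I)$ has digits $\gamma_k(\mathbf I|_j)$. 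Reducing mod $p$ and multiplying by $R$ (using $R\bm a\equiv l\bm a$ and $q\ge2$), the digits of $R\gamma_k^{*}(\mathbf I)$ modulo $Q$ are controlled. The key input is the non-repetition property of Proposition \ref{pro4.9}: the digit word of length $k$ for $\mathbf I\neq0^k$ is not a proper power. For $k$ large compared with the bounded period from (d), this is incompatible with matching a periodic tail, unless $\mathbf I_i=0^k$ for all $i$. In that case $x_i=Q^{-k}x_{i-1}$ around the cycle forces all $x_i=\mathbf 0$, which is a contradiction.
\end{enumerate}

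The main obstacle is step (e): making precise how multiplication by $R$, which mixes coordinates and carries, interacts with the $Q$-adic digits of $\gamma_k^{*}(\mathbf I)$. It is not literally the case that the digits of $R\gamma_k^{*}(\mathbf I)$ are $R$ times the digits of $\gamma_k^{*}(\mathbf I)$. I would first work modulo $p$ coordinatewise along the $\bm a$-direction, tracking only the residue classes $j\bm a$ of the digits. The deviation $e_{\bm a}$ at position $k$, against the residues $i_jq\bm a$ at positions $<k$, should already break periodicity once $k$ exceeds a bound depending on $R$, $q$ and $p$. If this fails, the fallback is to choose $e_{\bm a}$ adaptively depending on $R$.
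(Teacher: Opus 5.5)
Your treatment of $\Lambda(Q^k,\gamma_k^{*}(\Sigma_p^k))$ via Lemma \ref{thm4.5} matches the paper. So do steps (a)--(c) for $R\Lambda(Q^k,\gamma_k^{*}(\Sigma_p^k))$: Lemma \ref{lem4.6}, the cycle criterion of Lemma \ref{lem4.7}, integrality of the cycle points, and their confinement to a finite set independent of $k$.

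The gap is step (e). It is the entire content of the argument, and you leave it as an expectation. The route you sketch for it also fails. If you stay with the $Q$-adic digits of $x_i$ and of $R\gamma_k^{*}(\mathbf I_i)$, the matrix $R$ is entangled with carries. Your proposed repair, reducing mod $p$ along the $\bm a$-direction, erases exactly the feature you need. Indeed $e_{\bm a}\equiv j\bm a \pmod{p\mathbb{Z}^n}$, so when $p\nmid q$ the digit $e_{\bm a}+i_kq\bm a$ is congruent mod $p$ to a multiple of $\bm a$, just like the digits $i_jq\bm a$. What actually distinguishes it is that its first coordinate is not divisible by $q$; this is where $q\ge 2$ enters. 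Also, in (d): with the balanced digit set $\Gamma_q$ every integer vector has a finite expansion, so there is no periodic ``$-1$-type'' tail to match against.

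The missing idea is to remove $R$ before looking at digits. Since $Q=pqI_n$ commutes with $R$, one has $T(Q^k,R\gamma_k^{*}(\Sigma_p^k))=R\,T(Q^k,\gamma_k^{*}(\Sigma_p^k))$. Setting $y_i:=R^{-1}x_i$ gives $y_{i+1}=Q^{-k}(y_i+\gamma_k^{*}(\mathbf I_i))$.

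\begin{itemize}
\item[1.] $y_0$ has a purely periodic $\Gamma_q$-expansion as in \eqref{eq2.1}. Its digits are literally the values $\gamma_k(\mathbf I_i|_m)$, each length-$k$ block read in reverse, with no carries.
\item[2.] All $y_i$ lie in the finite, $k$-independent set $T(Q,\Gamma_q)\cap R^{-1}\mathbb{Z}^n$. Its points have finitely many expansions, all eventually periodic, which gives the bound $N_R$ on periods.
\item[3.] For $k>N_R$ the digit sequence has a period $P<k$. Digits whose first coordinate is not divisible by $q$ (the $e_{\bm a}+i_kq\bm a$) can occur only at positions in one residue class mod $k$. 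A period $P<k$ would move them to another class, so none occur, and every $\mathbf I_i$ has the form $0^{k-1}i_k$.
\item[4.] The nonzero digits $i_kq\bm a$ are then again confined to one residue class mod $k$. By the same argument all digits vanish, and the cycle is $\{\bm 0\}$.
\end{itemize}

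This is what the paper's appeal to Proposition \ref{pro4.9} amounts to.
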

\begin{proof}
	For any $k \ge 2$, let $\mathcal{A} = \Sigma_{p}^{k}$ be a cut set. By Lemma \ref{thm4.5}, $\Lambda(Q^k, \gamma_k^{*}(\Sigma_{p}^{k}))$ is a spectrum of $\mu_{Q,D}$. For any $\alpha \in T(Q, \Gamma_q) \cap \mathbb{Q}^n$, the representation of $\alpha$ is eventually periodic. Denote by $P(\alpha)$ its minimal period in \eqref{eq2.1}. Define
	\begin{equation*}
		N_R := \max\left\{ P(\alpha) : \alpha \in T(Q, \Gamma_q) \cap R^{-1}\mathbb{Z}^n \right\} < \infty.
	\end{equation*}
In what follows, we prove that $R\Lambda(Q^k, \gamma_k^{*}(\Sigma_{p}^{k}))$ is a spectrum of $\mu_{Q,D}$ for $k > N_R$. Suppose to the contrary that this is not the case; then by Lemma \ref{lem4.7}, there exists a nonzero $D_k$-cycle $\{x_0, x_1, \dots, x_{s-1}\}$ contained in $T(Q^k, R\gamma_k^{*}(\Sigma_{p}^{k})).$ Since
\begin{equation*}
	\left|\widehat{\delta}_{D_k}(\xi)\right| = \prod_{i=1}^{k} \left|\widehat{\delta}_{D}(Q^i \xi)\right|,
\end{equation*}
it follows immediately that $|\widehat{\delta}_{D_k}(\xi)| = 1$ if and only if $\xi \in \mathbb{Z}^n$. Then, all $x_j \in \mathbb{Z}^n$, and
\begin{equation*}
	R^{-1}x_j \in T\left(Q^k, R\gamma_k^{*}(\Sigma_{p}^{k})\right) \cap R^{-1}\mathbb{Z}^n \subset T(Q, \Gamma_q) \cap R^{-1}\mathbb{Z}^n, \quad 0 \le j \le s-1.
\end{equation*}
If $k > P(R^{-1}x_j)$, then by Proposition \ref{pro4.9}, there exist an integer $k_0 < k$ and a word $i_1\dots i_k \in \Sigma_{p}^{k}$ such that
$$\gamma_k(i_1\dots i_{k_0}) = \gamma_k(i_1\dots i_{k_0}\dots i_k) = i_{k_0} q\bm{a} \equiv e_{\bm{a}} + i_k q\bm{a} \pmod{Q}.$$
This is a contradiction, from which it follows that $k \le P(R^{-1}x_j)$. However, as $P(R^{-1}x_j) \le N_R < k$, this contradiction implies that $\{\mathbf{0}\}$ is the only $D_k$-cycle in $T(Q^k, R\gamma_k^{*}(\Sigma_{p}^{k}))$, completing the proof.
\end{proof}
Thus, we have completed the proof of Theorem \ref{thm1.3}. A natural question arising from this result.
\begin{conj}
Let  $ \mu_{Q,D} $ be defined by \eqref{1.1}, where $Q=p\diag[l_{1},l_{2},\dots,l_{n}]\in pM_{n}(\mathbb{Z})$ and $ D $ satisfies  \eqref{1.3} for some prime $  p > 2 $. Then, $R\in M_{n}(\mathbb{Z})$ is a specatral eigenmatrix of the spectral pair $(\mu_{Q,D},\Lambda)$ if and only if there exists $k\in\{1,2,\dots,p-1\}$ such that $R\bm{a}=k\bm{a}\pmod {p\mathbb{Z}^{n}}$.
\end{conj}

\section{ \bf{Spectral eigenmatrices in real diagonal matrix}}

Before proving Theorem \ref{thm1.4}, we introduce some notation and facts. We define $\mu_{Q,D}$ given by \eqref{1.1}, where $Q=p\diag[l_{1},l_{2},\dots,l_{n}]\in pM_{n}(\mathbb{Z})$ with $|l_{1}l_{2}\dots l_{n}|>1$, and the digit set $D$ satisfies \eqref{1.3}. Let
\begin{equation*}
E:=\left\{(i_{1},i_{2},\dots,i_{n})^{t}:i_{1},i_{2},\dots,i_{n}\in[0,p-1]\cap\mathbb{Z}\right\},
\end{equation*}
\begin{equation*}
	S:=\{\alpha_{0},\alpha_{1},\dots,\alpha_{p-1}\}:=\{\bm{0},\bm{a},\dots,(p-1)\bm{a}\}\pmod {p\mathbb{Z}^{n}}\cap E\;\;\text{and}\;\; \tilde{S}:=S\backslash\{\bm{0}\}.
\end{equation*}
Given that $\Lambda$ is a spectrum of $\mu_{Q,D}$ containing $\bm{0}$, we may conclude that $pQ^{-1}\Lambda \subset \mathbb{Z}^{n}$ by virtue of the properties of bi-zero sets. Therefore, for any $\lambda\in pQ^{-1}\Lambda$, there exist $e\in E$ and $\omega\in\mathbb{Z}^{n}$ such that $\lambda=e+m\omega.$ We then decompose $\Lambda$ as follows:
\begin{equation*}
	\Lambda = \bigcup_{e\in E} Q\left(\dfrac{e}{p} + \Omega_{e}\right),
\end{equation*}
where $\Omega_{e} = \{\omega : e + p\omega \in pQ^{-1}\Lambda\}$. Evidently, this union is non-disjoint. If $\Omega_{e} = \emptyset$, then $Q(\frac{e}{p}+\Omega_{e})=\emptyset$.   Since $\bm{0}\in\Lambda$, it follows that $\Omega_{\bm{0}}\neq\emptyset$. Let $\omega_1, \omega_2 \in \Omega_e$ be distinct (i.e., $\omega_1 \neq \omega_2$); then $Q(\frac{e}{p} + \omega_1), Q(\frac{e}{p} + \omega_2) \in \Lambda$. As $\Omega_e \subset \mathbb{Z}^n$, we deduce that
\begin{equation*}
	0 = \widehat{\mu}_{Q,D}(Q(\omega_1 - \omega_2)) = \widehat{\delta}_D(\omega_1 - \omega_2)\widehat{\mu}_{Q,D}(\omega_1 - \omega_2) = \widehat{\mu}_{Q,D}(\omega_1 - \omega_2).
\end{equation*}
Consequently, $\Omega_e$ is an orthogonal set for $\mu_{Q,D}$ whenever $\Omega_e \neq \emptyset$. Next, we demonstrate that
\begin{equation*}
	\Lambda=\bigcup_{e\in E}Q\left(\dfrac{e}{p}+\Omega_{e}\right)=\bigcup_{s\in S}Q\left(\dfrac{s}{p}+\Omega_{s}\right).
\end{equation*}
For any $e\in E$, if $\Omega_{e}\neq\emptyset$, then $Q(\frac{e}{p}+\Omega_{e})\subset\mathcal{Z}(\widehat{\mu}_{Q,D})$. Furthermore,
\begin{equation*}
	e+p\Omega_{e}\subset p\left(\mathcal{Z}(\widehat{\delta}_{D})\bigcup\bigcup_{i=1}^{\infty}Q^{i}\mathcal{Z}(\widehat{\delta}_{D})\right)=S+p\mathbb{Z}^{n}.
\end{equation*}
Hence, $e\in S$. On the other hand, if we define $S' = \{s \in S : \Omega_s \neq \emptyset\}$, it suffices to prove that $S' = S$. By virtue of $p\mathcal{Z}(\widehat{\delta}_D) \subset \mathbb{Z}^n$, there exists $\xi_1$ such that $Q^{-1}\xi_1 \in \mathbb{R}^n \setminus \mathbb{Q}^n$, and for any $s \in S$, $ |\widehat{\delta}_{D}(Q^{-1}\xi_{1}+\frac{s}{p})|^{2}>0$. Thus, if \( S' \neq S \), we deduce that \( \sum_{s \in S'} | \widehat{\delta}_D( Q^{-1} \xi_1 + \frac{s}{p} ) |^2 < 1 \), as \( p^{-1}S \) denotes the spectrum of \( \delta_D \). However, Theorem \ref{lem2.2} implies that
\begin{align*}
	1=\sum_{\lambda\in \Lambda}\left|\widehat{\mu}_{Q,D}(\xi_{1}+\lambda)\right|^{2}&=\sum_{s\in S'}\sum_{\omega\in \Omega_{s}}\left|\widehat{\delta}_{D}\left(Q^{-1}\xi_{1}+\frac{s}{p}+\omega\right)\right|^{2}\left|\widehat{\mu}_{Q,D}\left(Q^{-1}\xi_{1}+\frac{s}{p}+\omega\right)\right|^{2}\\
	&=\sum_{s\in S'}\left|\widehat{\delta}_{D}\left(Q^{-1}\xi_{1}+\frac{s}{p}\right)\right|^{2}\sum_{\omega\in \Omega_{s}}\left|\widehat{\mu}_{Q,D}\left(Q^{-1}\xi_{1}+\frac{s}{p}+\omega\right)\right|^{2}\\
	&\le\sum_{s\in S'}\left|\widehat{\delta}_{D}\left(Q^{-1}\xi_{1}+\frac{s}{p}\right)\right|^{2}<1,
\end{align*}
yielding a contradiction. Therefore, \( S' = S \). In summary, we establish that
\begin{equation}\label{eq5.1}
	\Lambda = \bigcup_{s\in S} Q\left(\dfrac{s}{p} + \Omega_s\right).
\end{equation}

We next state two lemmas required for the proof of the main theorem.
\begin{lemma}\label{lem5.1}
For any infinite word $\mathbf{I}=i_{0}i_{1}i_{2}\dots\in\{-1,1\}^{\mathbb{N}},$ the set
\begin{equation*}
	\Lambda(Q,p^{-1}\mathbf{I}QS):=\left\{\sum_{j=0}^{k}p^{-1}i_{j}	Q^{j+1}s_{j}:s_{j}\in S, k\in\mathbb{N}\right\}
\end{equation*}
is a spectrum of $\mu_{Q,D}.$
\end{lemma}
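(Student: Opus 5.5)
The plan is to apply Lemma \ref{lem2.7} with a sequence of Hadamard triples, so the work splits into rewriting the set, checking the Hadamard property, and checking the zero-set condition. First I rewrite the set. Put $L:=p^{-1}Q=\diag[l_1,\dots,l_n]\in M_n(\mathbb{Z})$. Then $p^{-1}i_jQ^{j+1}s_j=Q^{j}(i_jLs_j)$, so with
\[
S_j:=i_jLS\subset\mathbb{Z}^n,\qquad \bm{0}\in S_j,\qquad \#S_j=p,
\]
we have
\[
\Lambda(Q,p^{-1}\mathbf{I}QS)=S_0+QS_1+Q^2S_2+\cdots .
\]
In Lemma \ref{lem2.7}, replace the index $k\ge1$ by $j\ge0$.

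Second, I check that each $(Q,D,S_j)$ is a Hadamard triple, using Lemma \ref{lem2.3}(i). Take distinct $s,s'\in S$. Since $S\equiv\{\bm{0},\bm{a},\dots,(p-1)\bm{a}\}\pmod{p\mathbb{Z}^n}$, we can write $s-s'=m\bm{a}+pz$ with $m\not\equiv0\pmod p$ and $z\in\mathbb{Z}^n$. Then
\[
Q^{-1}i_jL(s-s')=\tfrac{i_jm}{p}\bm{a}+i_jz .
\]
Because $i_jm\not\equiv 0\pmod p$, this point lies in $\mathcal{Z}(\widehat{\delta}_D)$ by \eqref{1.3}. Hence $\widehat{\delta}_{Q^{-1}D}(i_jL(s-s'))=0$, as required.

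Third, and this is the main obstacle, I verify the uniform disjointness hypothesis. Let $T_k:=Q^{-k}S_0+\cdots+Q^{-1}S_{k-1}$. A typical element of $T_k$ is $\sum_{m=1}^{k}Q^{-m}(\pm L s)$ with $s\in S\subset E$, so in coordinate $r$ its absolute value is at most
\[
\sum_{m\ge1}\frac{(p-1)|l_r|}{(p|l_r|)^m}=\frac{(p-1)|l_r|}{p|l_r|-1}\le 1 .
\]
The obstacle is that this bound equals $1$ when $|l_r|=1$, so a crude cube argument does not separate $T_k$ from $\mathbb{Z}^n\setminus\{\bm 0\}$ uniformly. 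Instead I take $X$ to be the closure of $\bigcup_kT_k$. This is the compact set of all infinite sums $\sum_{m\ge1}Q^{-m}(\pm Ls_m)$. Since $\mathcal{Z}(\widehat{\mu}_{Q,D})$ is closed, it suffices to show $X\cap\mathcal{Z}(\widehat{\mu}_{Q,D})=\emptyset$; compactness of $X$ then gives a positive distance, which is the required uniformity.

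To prove the disjointness, recall from Section 2 that $\mathcal{Z}(\widehat{\mu}_{Q,D})=\bigcup_{k\ge1}Q^k\bigcup_{j=1}^{p-1}(\frac jp\bm{a}+\mathbb{Z}^n)\subset\mathbb{Z}^n$. Since $|l_1\cdots l_n|>1$, there is an index $r$ with $|l_r|\ge2$. For that $r$ the bound above gives $|x_r|\le\frac{2(p-1)}{2p-1}<1$ for every $x\in X$. So if $x\in X\cap\mathbb{Z}^n$, then $x_r=0$. On the other hand, if $x\in Q^k(\frac jp\bm{a}+z)$, then $x_r=p^kl_r^k(\frac{ja_r}{p}+z_r)$. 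This is nonzero, because $a_r,j\in[1,p-1]$ and $p$ is prime, so $p\nmid ja_r$ and hence $\frac{ja_r}{p}+z_r\ne0$. Therefore $X$ misses $\mathcal{Z}(\widehat{\mu}_{Q,D})$, and Lemma \ref{lem2.7} shows that $\Lambda(Q,p^{-1}\mathbf{I}QS)$ is a spectrum of $\mu_{Q,D}$.
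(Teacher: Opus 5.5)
Your proposal is correct and follows the paper's route: you check that $(Q,D,\pm p^{-1}QS)$ are Hadamard triples, show that the compact set of infinite sums misses $\mathcal{Z}(\widehat{\mu}_{Q,D})$ using the bound $|x_r|<1$ for an index with $|l_r|\ge 2$, and then invoke Lemma \ref{lem2.7}. Your observation that every point of $\mathcal{Z}(\widehat{\mu}_{Q,D})$ has nonzero $r$-th coordinate (because $p\nmid ja_r$) is exactly what finishes the disjointness; the paper leaves this implicit, noting only $|x_j|<1$ and $\mathcal{Z}(\widehat{\mu}_{Q,D})\subset\mathbb{Z}^n\setminus\{\bm{0}\}$, which by itself does not exclude points with $x_j=0$.
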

\begin{proof}
For any $x \in T(Q, \pm p^{-1}QS)$, we have that $x$ can be expressed as $x = \sum_{i=0}^{\infty} p^{-1}Q^{-i}s_i$, where $s_i \in \pm S$, $x = (x_1, \dots, x_n)^t$, and $s_i = (s_{1,i}, \dots, s_{n,i})^t$. Since $|l_1 l_2 \dots l_n| > 1$, it follows that there exists $j \in \{1, \dots, n\}$ such that $|l_j| \ge 2$. We then have
\begin{equation*}
	|x_{j}|\le p^{-1}\sum_{i=0}^{\infty}\left|(pl_{j})^{-i}s_{j,i}\right|\le\frac{(p-1)|l_{j}|}{|1-pl_{j}|}<1.
\end{equation*}
However,
$\mathcal{Z}(\widehat{\mu}_{Q,D})\subset\mathbb{Z}^{n}\backslash\{\bm{0}\}$, which implies
$ T(Q,\pm p^{-1}QS)\cap\mathcal{Z}(\widehat{\mu}_{Q,D})=\emptyset$. Furthermore, since \( \mathcal{Z}(\widehat{\mu}_{Q,D}) \) is a uniformly discrete set and \( T(Q, \pm p^{-1}QS) \) is a compact set, these two sets are separated by a uniform positive distance. By Lemma \ref{lem2.3}, we have that $(Q, D, p^{-1}QS)$ and $(Q, D, -p^{-1}QS)$ are Hadamard triples. Hence, Lemma \ref{lem2.7} yields that $\Lambda(Q, p^{-1}\mathbf{I}QS)$ is a spectrum for $\mu_{Q,D}$.
\end{proof}
\begin{lemma}\label{lem5.2}
Suppose $R$ is an integer diagonal matrix and there exists an integer $k \in \{1, \dots, p-1\}$ such that $R\bm{a} = k\bm{a} \pmod{p\mathbb{Z}^n}$. Then
\begin{enumerate}
	\item[\rm{(i)}]The expansion of $x\in T(Q,\pm p^{-1}QRS)$ is unique;
	\item[\rm{(ii)}]The expansion of
	$x\in T(Q,\pm p^{-1}QRS)\cap\mathcal{Z}(\widehat{\mu}_{Q,D})$ cannot be finite.
\end{enumerate}
\end{lemma}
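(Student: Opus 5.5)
Throughout, the signs are fixed level by level, as in Lemma \ref{lem5.1}. Thus a point of $T(Q,\pm p^{-1}QRS)$ is written
\[
x=\sum_{i\ge1}Q^{-i}\bigl(\varepsilon_i p^{-1}QRs_i\bigr)=\sum_{i\ge1}\varepsilon_i\,p^{-1}Q^{1-i}Rs_i ,\qquad s_i\in S,\ \varepsilon_i\in\{-1,1\}.
\]
Write $R=\diag[r_1,\dots,r_n]$. The first step is an arithmetic observation. Since every coordinate $a_j$ of $\bm{a}$ lies in $[1,p-1]$, the congruence $R\bm{a}\equiv k\bm{a}\pmod{p\mathbb{Z}^n}$ gives $r_j\equiv k\not\equiv0\pmod p$ for every $j$. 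Next, let $s\neq s'\in S$, say $s\equiv m\bm{a}$ and $s'\equiv m'\bm{a}$ with $m\not\equiv m'\pmod p$. Then every coordinate of $s-s'$ is $\equiv (m-m')a_j\not\equiv 0\pmod p$. Multiplying by $r_j$, every coordinate of $R(s-s')$ is also nonzero modulo $p$. In particular, for each nonzero $s\in S$ every coordinate of $Rs$ is prime to $p$.

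For (i), suppose $x$ has two expansions $(s_i)$ and $(s_i')$ with the same sign word, and let $t$ be the first index with $s_t\neq s_t'$. Multiply the difference of the two expansions by $Q^{t-1}$. As in the proof of Lemma \ref{lem5.1}, pick a coordinate $j$ with $|l_j|\ge2$, which exists because $|l_1\cdots l_n|>1$. Dividing that coordinate by $p^{-1}\cdot p\,l_j r_j\neq0$ leaves
\[
\sum_{i\ge t}\delta_i\,(pl_j)^{-(i-t)}=0,\qquad \delta_i=\varepsilon_i\,(s_{i,j}-s'_{i,j})\in[-(p-1),p-1]\cap\mathbb{Z}.
\]
Here $\delta_t\neq0$ by the first step, so $|\delta_t|\ge1$. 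On the other hand, the tail is bounded by $\sum_{i\ge1}(p-1)(p|l_j|)^{-i}\le\frac{p-1}{2p-1}<1$. This is a contradiction, so the expansion is unique.

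Should the sign word also be allowed to differ between the two expansions, the same computation works. The only change is that $\delta_i=\varepsilon_is_{i,j}-\varepsilon_i's'_{i,j}$ now ranges over $(-2p,2p)$, so the tail estimate must be redone. This is the one point where I expect care is needed: one may have to use the mod-$p$ information on $\delta_t$ rather than just $|\delta_t|\ge1$.

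For (ii), suppose $x\in\mathcal{Z}(\widehat{\mu}_{Q,D})$ has a finite expansion of effective length $N$, so $s_N\neq\bm{0}$ and $s_i=\bm{0}$ for $i>N$. Recall from Section 2 that $\mathcal{Z}(\widehat{\mu}_{Q,D})=\bigcup_{m\ge1}Q^m(\frac{j}{p}\bm{a}+\mathbb{Z}^n)\subset\mathbb{Z}^n$, hence $Q^{N-1}x\in\mathbb{Z}^n$. But
\[
Q^{N-1}x=\sum_{i=1}^{N-1}\varepsilon_i\,p^{-1}Q^{N-i}Rs_i+\varepsilon_N\,p^{-1}Rs_N .
\]
Every term with $i<N$ is integral, because $Q^{N-i}\in pM_n(\mathbb{Z})$. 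The last term, $p^{-1}Rs_N$, is not integral, since by the first step the coordinates of $Rs_N$ are prime to $p$. This contradiction shows that no point of $T(Q,\pm p^{-1}QRS)\cap\mathcal{Z}(\widehat{\mu}_{Q,D})$ has a finite expansion; note that $x=\bm{0}$ is excluded since $\bm{0}\notin\mathcal{Z}(\widehat{\mu}_{Q,D})$. The uniqueness from (i) makes the phrase \emph{the} expansion in (ii) well defined.
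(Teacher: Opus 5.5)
Your part (ii) is correct and is essentially the paper's argument. You clear denominators with $Q^{N-1}$ and use only $x\in\mathbb{Z}^n$, while the paper multiplies by $pQ^{h}$ and compares residues modulo $p$; these are the same idea. Your first step is also fine: $r_j\equiv k$, and all coordinates of $R s$ are prime to $p$ for $\bm 0\neq s\in S$.

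\textbf{The gap is in (i).} You prove uniqueness only for two expansions with the same sign word. You leave open the case where the two expansions have nonzero digits of opposite signs at the same level. That case is exactly what the lemma asserts: digits of $T(Q,\pm p^{-1}QRS)$ are taken from $S\cup(-S)$ independently at each level, and the paper's proof takes arbitrary sequences $\{s_j\},\{s'_j\}\subset\pm S$. It is also what Lemma \ref{lem5.3} needs. To get $T(Q^{K+H+1},S_{K,H,R_k})\cap\mathcal{Z}(\widehat{\mu}_{Q,D})=\emptyset$ from the choice of sign pattern $\tilde{\mathbf{I}}$, one must know that a point $\beta_{k,i}$ has no second expansion with a different sign pattern. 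So (i) is not yet proved.

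Moreover, the route you suggest, using mod-$p$ information on $\delta_t$, does not work in the mixed case. Take $s_t\equiv m\bm a$ and $s'_t\equiv (p-m)\bm a$ with opposite signs. Then $\delta_t=\pm(s_{t,j}+s'_{t,j})=\pm p\equiv 0\pmod p$.

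\textbf{The fix is elementary, and it is what the paper does.}
\begin{enumerate}
\item[(a)] Every coordinate of an element of $S$ lies in $[0,p-1]$. Hence in the mixed case $|\delta_i|\le 2(p-1)$, which is sharper than your bound $|\delta_i|<2p$.
\item[(b)] The tail is therefore at most $\frac{2p-2}{p|l_j|-1}\le\frac{2p-2}{2p-1}<1$.
\item[(c)] If the signs at level $t$ agree, $\delta_t\neq0$ by your first step.
\item[(d)] If the signs differ, $\delta_t=\pm(s_{t,j}+s'_{t,j})$ with both summands in $[0,p-1]$. This vanishes only if $s_{t,j}=s'_{t,j}=0$. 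Every nonzero element of $S$ has all coordinates in $[1,p-1]$, so this forces $s_t=s'_t=\bm 0$, meaning the two digits at level $t$ coincide, contrary to the choice of $t$.
\item[(e)] Thus $|\delta_t|\ge 1$ exceeds the tail, which is the required contradiction.
\end{enumerate}
With this case added, your proof of (i) is complete. It coincides with the paper's proof, which reduces to $s_t-s'_t=\sum_{j\ge1}Q^{-j}(s'_{t+j}-s_{t+j})$ by cancelling the diagonal matrix $R$.
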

\begin{proof}
$\rm{(i)}$ For any $x \in T(Q, \pm p^{-1}QRS)$, if $x$ does not have a unique expansion, there exist two distinct vector sequences $\{s_j = (s_{1,j}, s_{2,j}, \dots, s_{n,j})^t\}_{j=0}^{\infty}$ and $\{s_j' = (s_{1,j}', s_{2,j}', \dots, s_{n,j}')^t\}_{j=0}^{\infty} \subset \pm S$ such that
\begin{equation}\label{eq5.2}
	x = p^{-1}\sum_{j=0}^{\infty}Q^{-j}Rs_j = p^{-1}\sum_{j=0}^{\infty}Q^{-j}Rs_j'.
\end{equation}
Let $t = \min\{j \ge 0 : s_j \neq s_j'\}$. As $R\bm{a} = k\bm{a} \pmod{p\mathbb{Z}^n}$, $R$ is invertible. From \eqref{eq5.2}, we obtain
\begin{equation*}
	\left(s_{1,t}-s_{1,t}',\dots,s_{n,t}-s_{n,t}'\right)^{t}=\left(\sum_{j=1}^{\infty}\frac{s_{1,t+j}'-s_{1,t+j}}{(pl_{1})^{j}},\dots,\sum_{j=1}^{\infty}\frac{s_{n,t+j}'-s_{n,t+j}}{(pl_{n})^{j}}\right)^{t}.
\end{equation*}
Since $|l_1 l_2 \dots l_n| > 1$, there exists $i \in \{1, 2, \dots, n\}$ such that $|l_i| \ge 2$, and
\begin{equation*}
	\left|s_{i,t}-s_{i,t}'\right|=\left|\sum_{j=1}^{\infty}\frac{s_{i,t+j}'-s_{i,t+j}}{(pl_{i})^{j}}\right|\le \frac{2p-2}{|pl_{i}|-1}<1.
\end{equation*}
This contradicts $|s_{i,t} - s_{i,t}'| \in \{1, 2, \dots, 2(p-1)\}$. Therefore, the expansion is unique.\\
$\rm{(ii)}$
Since $x \in T(Q, \pm p^{-1}QRS) \cap \mathcal{Z}(\widehat{\mu}_{Q,D})$, there exist $k_1 \in \mathbb{Z}^n$, $j \in \mathbb{N}$, and $k \in \{1, \dots, p-1\}$ such that $x = Q^j(\frac{k\bm{a}}{p} + k_1)$. Suppose there exists $h \in \mathbb{N}$ such that
$$x = p^{-1}\left(Rs_1 + Q^{-1}Rs_2 + Q^{-2}Rs_3 + \dots + Q^{-h}Rs_{h+1}\right),$$
where $s_i \in \pm S$. Then,
\begin{equation}\label{eq5.3}
	\left(Q^hRs_1 + Q^{h-1}Rs_2 + Q^{h-2}Rs_3 + \dots + Q^{h-h}Rs_{h+1}\right) = Q^{j+h}\left(k\bm{a} + pk_1\right).
\end{equation}
The right-hand side of \eqref{eq5.3} belongs to $p\mathbb{Z}^n$, and the left-hand side belongs to $\tilde{S} + p\mathbb{Z}^n$, which is a contradiction. Hence, the proof is complete.
\end{proof}
\begin{lemma}\label{lem5.3}
	Let $\{R_k\}_{k=1}^N$ be a finite sequence of integer diagonal matrices. For each $i \in \{1, \dots, N\}$, there exists $k_i \in \{1, \dots, p-1\}$ such that $R_i\bm{a} = k_i\bm{a} \pmod{p\mathbb{Z}^n}$. Then there exists a spectrum $\Lambda$ for $\mu_{Q,D}$ such that $R_i\Lambda$ is a spectrum for $\mu_{Q,D}$ for every $i \in \{1, \dots, N\}$.
\end{lemma}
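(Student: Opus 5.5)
The plan is to take $\Lambda=\Lambda(Q,p^{-1}\mathbf{I}QS)$ from Lemma \ref{lem5.1} for a suitably chosen sign word $\mathbf{I}\in\{-1,1\}^{\mathbb{N}}$, or a mild variant of it, and then show that each $R_i\Lambda$ is a spectrum. The base set is $\Lambda$ itself. First note that $R_i\Lambda=\Lambda(Q,p^{-1}\mathbf{I}QR_iS)$, because $R_i$ is diagonal and so commutes with $Q$.

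Since $R_i\bm{a}\equiv k_i\bm{a}\pmod{p\mathbb{Z}^n}$, we get $R_iS\equiv S\pmod{p\mathbb{Z}^n}$ as sets. The differences of $p^{-1}QR_iS$ therefore lie in $Q(\frac{j}{p}\bm{a}+\mathbb{Z}^n)$ with $j\neq 0$. By Lemma \ref{lem2.3}(i), $(Q,D,\pm p^{-1}QR_iS)$ is then a Hadamard triple for every $i$. This is the same computation as in Lemma \ref{lem4.6}.

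By Lemma \ref{lem2.7}, it remains to check that $\mathcal{Z}(\widehat{\mu}_{Q,D})$ is uniformly disjoint from the partial sums $Q^{-k}S_1+\dots+Q^{-1}S_k$, where $S_j=i_{j-1}p^{-1}QR_iS$. Those partial sums lie in the attractor $T(Q,\pm p^{-1}QR_iS)$. The key claim is that for each $i$ we can pick the signs so that this attractor, restricted to the sign pattern determined by $\mathbf{I}$, meets $\mathcal{Z}(\widehat{\mu}_{Q,D})$ in no point. Compactness of the attractor and uniform discreteness of the zero set then give a positive separation.

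To control the intersection I use Lemma \ref{lem5.2}. Expansions in $T(Q,\pm p^{-1}QR_iS)$ are unique, and any point of the zero set in the attractor has an infinite expansion. Such a point $x$ is integral, since the zero set is contained in $\mathbb{Z}^n$. Moreover, $Qx - p^{-1}QR_is_0$ is again an attractor point that is rational with bounded denominator. So the expansion of $x$ is eventually periodic, just as in Lemma \ref{lem2.5}. The set of such points is finite, because bounded lattice points in a compact set are finitely many. Each one has a unique expansion, with a unique eventual sign pattern and with digits nonzero infinitely often.

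The number of matrices $R_i$ is finite, so there are finitely many bad sequences of signed digits overall. I then choose $\mathbf{I}$ to avoid all of them. For example, take $\mathbf{I}$ aperiodic, or with blocks of $+1$ and $-1$ of strictly increasing lengths. A sign pattern of that kind cannot agree, on the infinitely many positions where a bad expansion has a nonzero digit, with any eventually periodic sign pattern. Here uniqueness of expansions matters: $-s$ and $s$ determine different signs when $s\neq\bm{0}$.

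The main obstacle is this last step. Lemma \ref{lem2.7} requires disjointness only for the finite sums with the reversed ordering $Q^{-k}S_1+\dots+Q^{-1}S_k$, which corresponds to reading $\mathbf{I}$ backwards from position $k$. So the condition really concerns all finite prefixes of $\mathbf{I}$, read backwards and extended. I would handle this by working directly with limit points. If the distance did not stay bounded below, compactness would produce a zero in the attractor whose expansion has a sign pattern that is a limit of reversed prefixes of $\mathbf{I}$. I would then choose $\mathbf{I}$ so that no such limit pattern agrees with an eventually periodic pattern on the support of a nonzero digit sequence. A workable choice is blocks $(+1)^{m}(-1)^{m}$ with $m$ increasing: every limit pattern is then eventually constant or a single switch, and I must still rule out constant-sign bad points. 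If that fails, the fallback is to use the freedom to replace $\Lambda$ by $\Lambda(Q^k,\cdot)$ as in Theorem \ref{thm4.10}, choosing $k$ larger than every period occurring among the finitely many bad points, for all $R_i$ simultaneously.
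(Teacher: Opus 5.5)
Your framework is the paper's: $\Lambda=\Lambda(Q,p^{-1}\mathbf{I}QS)$ and $R_i\Lambda=\Lambda(Q,p^{-1}\mathbf{I}QR_iS)$ since $R_iQ=QR_i$. The Hadamard triples come from $R_iS\equiv S\pmod{p\mathbb{Z}^n}$, the spectrality criterion is Lemma \ref{lem2.7}, and you use the finitely many bad points in $T(Q,\pm p^{-1}QR_iS)\cap\mathcal{Z}(\widehat{\mu}_{Q,D})$, with unique expansions having infinitely many nonzero digits (Lemma \ref{lem5.2}).

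The gap is the decisive step. You never exhibit a word $\mathbf{I}$ satisfying the hypothesis of Lemma \ref{lem2.7}, and the choice you settle on fails. Take blocks $(+1)^m(-1)^m$ with $m\to\infty$, and choose $k$ so that $i_{k-m},\dots,i_{k-1}$ is a $(+1)^m$-block. With $S_j=i_{j-1}p^{-1}QR_iS$, the set $Q^{-k}S_1+\cdots+Q^{-1}S_k$ then contains every sum $\sum_{j=1}^{m}Q^{-j}p^{-1}QR_is_j$ with $s_j\in S$. Along these $k$ the sets come arbitrarily close to every point of the all-plus attractor $T(Q,p^{-1}QR_iS)$, and that attractor can meet the zero set. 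For example, take $n=1$, $p=3$, $Q=6$, $D=\{0,1,2\}$ (so $\bm{a}=1$, $S=\{0,1,2\}$) and $R=5\equiv 2\pmod 3$. Then $p^{-1}QRS=\{0,10,20\}$ and $\sum_{j\ge1}20\cdot 6^{-j}=4=6\cdot\frac{2}{3}\in\mathcal{Z}(\widehat{\mu}_{Q,D})$. So the constant-sign bad points you say you ``must still rule out'' genuinely occur, the distance to $\mathcal{Z}(\widehat{\mu}_{Q,D})$ tends to $0$ along a subsequence, and Lemma \ref{lem2.7} cannot be invoked. Aperiodicity of $\mathbf{I}$ is not the relevant property either. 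Your fallback is also not an argument as stated: a period longer than the periods of the bad points does not by itself exclude them, since survival depends on which signs fill the period, not on its length.

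What closes the gap, and what the paper does, is to take $\mathbf{I}$ \emph{periodic} of period $P$ and design one period against the finite list of bad points for all $R_i$ at once. Then $\mu_{Q,D}=\mu_{Q^P,D_P}$ and $R_i\Lambda=\Lambda(Q^P,S^{(i)})$ for a single digit set $S^{(i)}\ni\bm{0}$, which gives a Hadamard triple by Lemma \ref{lem2.3}(ii). All sets in Lemma \ref{lem2.7} for this $Q^P$-system lie in one compact set, $T(Q^P,S^{(i)})=\{p^{-1}\sum_{j\ge0}\tilde{i}_jQ^{-j}R_is_j: s_j\in S\}$, where $\tilde{i}$ is the $P$-periodic word obtained by reversing the period of $\mathbf{I}$. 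So the reversal issue disappears. The period is chosen as follows.
\begin{enumerate}
\item Split the bad points into those with infinitely many nonzero digits of sign $+1$ and the rest; by Lemma \ref{lem5.2}(ii) the rest have infinitely many of sign $-1$.
\item Choose $K$ so that every point of the first kind has a $(+1)$-signed nonzero digit at some position $\le K$.
\item Choose $H$ so that every point of the second kind has a $(-1)$-signed nonzero digit in $(K,K+H]$.
\item Set $\tilde{i}_j=-1$ for $0\le j\le K$, $\tilde{i}_j=+1$ for $K<j\le K+H$, and $P=K+H+1$.
\end{enumerate}
By uniqueness of expansions (Lemma \ref{lem5.2}(i)), no bad point lies in $T(Q^P,S^{(i)})$. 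Compactness against the discrete set $\mathcal{Z}(\widehat{\mu}_{Q,D})$ gives uniform separation, and Lemma \ref{lem2.7}, with Lemma \ref{lem5.1} for $\Lambda$ itself, finishes.

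Your $Q^k$ idea can also be made rigorous via Lemma \ref{lem2.4}. Take $P$ \emph{prime}, larger than the number of integer points in the attractors $T(Q,\pm p^{-1}QR_iS)$, with any non-constant period. A nonzero cycle point is an integer point whose expansion is purely periodic of some period $d<P$. Since $\gcd(d,P)=1$, every residue class mod $d$ in its support meets every class mod $P$, forcing $\tilde{i}$ to be constant. Nothing of this kind is in your proposal, however.
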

\begin{proof}
	We prove this by considering two cases.
	If \( \cup_{k=1}^{N} T(Q, \pm p^{-1}QR_k S) \cap \mathcal{Z}(\widehat{\mu}_{Q,D}) = \emptyset \), then by adapting the proof technique of Lemma \ref{lem5.1}, we conclude that for any infinite word \( \mathbf{I} = i_0 i_1 \dots \in \{-1, 1\}^{\mathbb{N}} \), \( \Lambda(Q, p^{-1} \mathbf{I} QR_k S) \) is a spectrum for \( \mu_{Q,D} \). As \( R_k Q = Q R_k \), we have \( \Lambda(Q, p^{-1}\mathbf{I}QR_k S) = R_k \Lambda(Q, p^{-1}\mathbf{I}QS) \), where \( \Lambda(Q, p^{-1}\mathbf{I}QS) \) is also a spectrum for \( \mu_{Q,D} \).
	
	If $\cup_{k=1}^{N} T(Q, \pm p^{-1}QR_k S) \cap \mathcal{Z}(\widehat{\mu}_{Q,D})\neq\emptyset$, then since $\cup_{k=1}^{N} T(Q, \pm p^{-1}QR_k S)$ is compact and $\mathcal{Z}(\widehat{\mu}_{Q,D})$ is uniformly discrete, their intersection is finite. We define
	\begin{equation*}
		\Delta_{k,1} := \left\{ p^{-1}\sum_{j=0}^{\infty}i_j Q^{-j}R_k s_j \in \mathcal{Z}(\widehat{\mu}_{Q,D}) : \exists\ \text{infinitely many}\ j \in \mathbb{N}\ \text{with}\ i_j|s_j| > 0 \right\}
	\end{equation*}
	and
	\begin{equation*}
		\Delta_{k,2} := \left\{ p^{-1}\sum_{j=0}^{\infty}i_j Q^{-j}R_k s_j \in \mathcal{Z}(\widehat{\mu}_{Q,D}) \setminus \Delta_{k,1} : \exists\ \text{infinitely many}\ j \in \mathbb{N}\ \text{with}\ i_j|s_j| < 0 \right\}.
	\end{equation*}
	From Lemma \ref{lem5.2} $\rm(ii)$, it follows that
	$$\bigcup_{k=1}^{N} T(Q, \pm p^{-1}QR_k S) \cap \mathcal{Z}(\widehat{\mu}_{Q,D}) = \bigcup_{k=1}^{N} (\Delta_{k,1} \cup \Delta_{k,2}).$$
Without loss of generality, we assume
	\begin{equation*}
		\Delta_{k,1} = \{\beta_{k,1}, \beta_{k,2}, \dots, \beta_{k,t_k}\} \; \text{and} \; \Delta_{k,2} = \{\beta_{k,t_k+1}, \beta_{k,t_k+2}, \dots, \beta_{k,t_k+h_k}\},
	\end{equation*}
	where $t_k, h_k \in \mathbb{N}$ denote the cardinalities of $\Delta_{k,1}$ and $\Delta_{k,2}$, respectively. For each $1 \le k \le N$ and $1 \le i \le t_k + h_k$, we define
	$$\beta_{k,i} := \sum_{j=0}^{\infty} p^{-1}i_j^{(k,i)}Q^{-j}R_k s_j^{(k,i)},$$
	where $s_j^{(k,i)} \in S$ and $i_j^{(k,i)} \in \{-1,1\}$. Lemma \ref{lem5.2}(i) guarantees that the expansion of $\beta_{k,i}$ is unique. By combining this with the definitions of $\Delta_{k,1}$ and $\Delta_{k,2}$, we may choose two positive integers $K$ and $H$ that satisfy the following two conditions:
\begin{enumerate}
	\item[\rm(1)] $K=0$ when $\cup_{k=1}^{N}\Delta_{k,1}=\emptyset$.  Otherwise, for any $1\le k\le N$, if $\Delta_{k,1}\neq\emptyset $, for any $1\le i\le t_{k}$, there exists $ 1\le l\le K$ such that $i_{l}^{(k,i)}|s_{l}^{(k,i)}|>0$.
	\item[\rm(2)] $H=0$ when $\cup_{k=1}^{N}\Delta_{k,2}=\emptyset$.  Otherwise, for any $1\le k\le N$, if $\Delta_{k,2}\neq\emptyset$, for any $t_{k}+1\le i\le t_{k}+h_{k}$, there exists $ H+1\le l\le H+K$ such that $i_{l}^{(k,i)}|s_{l}^{(k,i)}|<0$.
\end{enumerate}
For each non-negative integer \( c \), we define the sequence \( \{\tilde{i}_m\}_{m \in \mathbb{N}} \) by setting
\[
\tilde{i}_{c(K+H+1)} = \tilde{i}_{c(K+H+1)+1} = \dots = \tilde{i}_{c(K+H+1)+K} = -1,
\]
and
\[
\tilde{i}_{c(K+H+1)+K+1} = \tilde{i}_{c(K+H+1)+K+2} = \dots = \tilde{i}_{c(K+H+1)+K+H} = 1.
\]
Furthermore, we choose \( \tilde{\mathbf{I}} = \tilde{i}_0 \tilde{i}_1 \dots \tilde{i}_{K+H} \tilde{i}_{K+H+1} \dots \), define
\begin{equation*}
	S_{K,H,R_k} = \sum_{j=1}^{H} p^{-1}Q^j R_k S - \sum_{j=H+1}^{H+K+1} p^{-1}Q^j R_k S
\end{equation*}
and \( D_{K+H+1} = \sum_{j=0}^{H+K} Q^j D \). From the definitions of $K$ and $H$, it follows immediately that
\begin{equation*}
	T\left(Q^{K+H+1}, S_{K,H,R_k}\right) = \left\{ p^{-1}\sum_{j=0}^{\infty}\tilde{i}_j Q^{-j}R_k s_j : s_j \in S \right\}
\end{equation*}
and \( T(Q^{K+H+1}, S_{K,H,R_k}) \cap \mathcal{Z}(\widehat{\mu}_{Q,D}) = \emptyset \). By adapting the proof technique of Lemma \ref{lem5.1}, we establish that for all $1 \le k \le N$, $\Lambda(Q^{K+H+1}, S_{K,H,R_k})$ is a spectrum of $\mu_{Q^{K+H+1}, D_{K+H+1}}$. From the definition of $S_{K,H,R_k}$, there exists an infinite word $\mathbf{I} = i_0 i_1 \dots \in \{-1,1\}^{\mathbb{N}}$ such that $\Lambda(Q, p^{-1}\mathbf{I}QR_k S) = \Lambda(Q^{K+H+1}, S_{K,H,R_k})$. Since $R_k Q = Q R_k$ and by Lemma \ref{lem5.1}, we infer that $\Lambda(Q, p^{-1}\mathbf{I}QR_k S) = R_k \Lambda(Q, p^{-1}\mathbf{I}QS)$ and that $\Lambda(Q, p^{-1}\mathbf{I}QS)$ is a spectrum of $\mu_{Q,D}$. Note that $\mu_{Q^{K+H+1}, D_{K+H+1}} = \mu_{Q,D}$, which implies that $R_k \Lambda(Q, p^{-1}\mathbf{I}QS)$ is a spectrum of $\mu_{Q,D}$ for all $1 \le k \le N$.

In summary, the proof of Lemma \ref{lem5.3} is now complete.
\end{proof}
With the above technical lemma, we proceed to prove Theorem \ref{thm1.4}.
\begin{proof}[\textbf{Proof of Theorem \ref{thm1.4}}]
$\Rightarrow$. Since $R = \mathrm{diag}[\frac{p_1}{q_1}, \frac{p_2}{q_2}, \dots, \frac{p_n}{q_n}] \in M_n(\frac{\mathbb{Z}\setminus p\mathbb{Z}}{\mathbb{Z}\setminus p\mathbb{Z}})$, there exist integers $k_2, k_3, \dots, k_n \in \{1, 2, \dots, p-1\}$ such that $q_1 \equiv k_i q_i \pmod{p}$ for each $i = 2, 3, \dots, n$. Define
\begin{equation*}
	R_{1}=\diag[p_{1},k_{2}p_{2},k_{3}p_{3},\dots,k_{n}p_{n}] \;\text{and}\;R_{2}=\diag[q_{1},k_{2}q_{2},k_{3}q_{3},\dots,k_{n}q_{n}],
\end{equation*}
	so that $R = R_1 R_2^{-1}$. It is straightforward to verify that $R_2\bm{a} = q_1\bm{a} \pmod{p\mathbb{Z}^n}$. 	Since there exists $k \in \{1, \dots, p-1\}$ such that $\mathrm{diag}[p_1, \dots, p_n]\bm{a} = k \mathrm{diag}[q_1, \dots, q_n]\bm{a} \pmod{p\mathbb{Z}^n}$, we deduce that there exists $l \in \{1, \dots, p-1\}$ such that $R_1\bm{a} = l\bm{a} \pmod{p\mathbb{Z}^n}$. By Lemma \ref{lem5.3}, there exists a spectrum $\Lambda$ of $\mu_{Q,D}$ for which $R_1\Lambda$ and $R_2\Lambda$ are spectra of $\mu_{Q,D}$. Consequently, $R(R_2\Lambda) = R_1\Lambda$, which is additionally a spectrum of $\mu_{Q,D}$. We thus conclude that $R$ is a spectral eigenmatrix of $\mu_{Q,D}$.

$\Leftarrow$. Suppose $\Lambda$ and $R\Lambda$ are spectra of $\mu_{Q,D}$; then we have the decomposition:
\begin{equation*}
	\Lambda = \bigcup_{s\in S} Q\left(\dfrac{s}{p} + \Omega_s\right) \; \text{and} \; R\Lambda = \bigcup_{s\in S} Q\left(\dfrac{s}{p} + \Omega_s'\right).
\end{equation*}
Moreover, we derive
\begin{equation}\label{eq5.4}
	R\bigcup_{s\in S}(s + p\Omega_s) = \bigcup_{s\in S}\left(s + p\Omega_s'\right) \subset \mathbb{Z}^n,
\end{equation}
which implies $R = \mathrm{diag}[\frac{p_1}{q_1}, \dots, \frac{p_n}{q_n}] \in M_n(\mathbb{Q})$. We continue to verify that $R \in M_n(\frac{\mathbb{Z}\setminus p\mathbb{Z}}{\mathbb{Z}\setminus p\mathbb{Z}})$. Assume for contradiction that this is not the case; then there exist $i,j \in \{1, 2, \dots, n\}$ such that $p \mid p_i$ and $p \mid q_j$. Without loss of generality, let $i = j = 1$ (all other cases follow by analogous arguments). If \( p \mid p_1 \), then
\[
R \bigcup_{s \in S} ( s + p \Omega_s ) = \bigcup_{s \in S} ( s + p \Omega_s' ) \subset p\mathbb{Z} \times \mathbb{Z}^{n-1},
\]
a contradiction. If \( p \mid q_1 \), then
\[
R \bigcup_{s \in S \setminus \{\bm{0}\}} (s + p\Omega_s) \nsubseteq \mathbb{Z}^n,
\]
another contradiction. Therefore, $R \in M_n(\frac{\mathbb{Z}\setminus p\mathbb{Z}}{\mathbb{Z}\setminus p\mathbb{Z}})$. By \eqref{eq5.4} and the definition of $S$, there exists $k \in \{1, 2, \dots, p-1\}$ such that $\mathrm{diag}[p_1, \dots, p_n]\bm{a} = k\mathrm{diag}[q_1, \dots, q_n]\bm{a} \pmod{p\mathbb{Z}^n}$.

The proof of the theorem is now complete.
\end{proof}
Finally, we present several examples to illustrate our main theorem.
\begin{exam}\label{exam5.4}
Let \( \mu_{Q,D} \) be a self-affine measure defined by \( \eqref{1.1} \), where \( Q \) is an expansion matrix and \( D =\{(0,0)^{t},(1,0)^{t},(1,1)^{t}\} \). We have the following conclusions.
\begin{enumerate}
	\item[\rm(i)]If $Q=\diag[3q,3q]$ with $q>1$, then $R\in M_{2}(\mathbb{Z}) $ is the second type
	spectral eigenmatrix of $\mu_{Q,D}$ if and only if there exists $k\in\{1,2\} $ such that $R(1,1)^{t}=k(1,1)^{t}\pmod {3\mathbb{Z}^{2}}.$
	\item[\rm(ii)]If $Q=\diag[3l_{1},3l_{2}]$ with $|l_{1}l_{2}|>1$, then diagonal matrix $R\in M_{2}(\mathbb{R}) $ is the second type spectral eigenmatrix of $\mu_{Q,D}$ if and only if $R=\diag[\frac{p_{1}}{q_{1}},\frac{p_{2}}{q_{2}}]\in M_{2}(\frac{\mathbb{Z}\backslash 3\mathbb{Z}}{\mathbb{Z}\backslash 3\mathbb{Z}})$ and there exists $k\in\{1,2\} $ such that $\diag[p_{1},p_{2}](1,1)^{t}=k\diag[q_{1},q_{2}](1,1)^{t}\pmod {3\mathbb{Z}^{2}}.$
\end{enumerate}
\end{exam}
\begin{proof}
It can be easily calculated that
$\mathcal{Z}(\widehat{\delta}_{D})=(\frac{1}{3}(1,1)^{t}+\mathbb{Z}^{2})\cup(\frac{2}{3}(1,1)^{t}+\mathbb{Z}^{2})$, which satisfies \eqref{1.3}.
Therefore, we can directly obtain $(i)$ and $(ii)$ from Theorem \ref{thm1.3} and Theorem \ref{thm1.4}, respectively.
\end{proof}
\begin{exam}
	If \( Q=\diag[6,3]  \) in Example \ref{exam5.4}, then
	\begin{equation*}
		\Lambda=\sum_{i=1}^{k}Q^{i-1}\left\{
		{\left({\begin{array}{*{20}{c}}
					0\\
					0\\
			\end{array}}\right)},
		{\left({\begin{array}{*{20}{c}}
					2\\
					
					1\\
			\end{array}}\right)},
		{\left({\begin{array}{*{20}{c}}
					-2\\
					-1\\
			\end{array}}\right)}  \right\},\;\; k\in\mathbb{N}
	\end{equation*}
is a spectrum of \( \mu_{Q,D}\). Let  $R=\diag[\frac{1}{2},\frac{1}{5}]$, then $\diag[1,1](1,1)^{t}=2\diag[2,5](1,1)^{t}\pmod {3\mathbb{Z}^{2}}.$ Similar to the proof of Lemma \ref{lem5.3}, we can obtain that $\diag[2,5]\Lambda$ is also a spectrum of $\mu_{Q,D}$. Thus, we get that $R=\diag[\frac{1}{2},\frac{1}{5}]$ is the second type
	spectral eigenmatrix of $\mu_{Q,D}$.
\end{exam}

\end{document}